\documentclass[11pt,reqno]{amsart}

\usepackage{amssymb}
\usepackage{latexsym}
\usepackage{amsbsy}
\usepackage{amsfonts}
\usepackage{appendix}
\usepackage{pstricks}
\usepackage{hyperref}
\usepackage{color}
\allowdisplaybreaks
\usepackage{enumitem}

\numberwithin{equation}{section}
\newtheorem{theorem}{Theorem}[section]
\newtheorem{corollary}[theorem]{Corollary}
\newtheorem{lemma}[theorem]{Lemma}
\newtheorem{proposition}[theorem]{Proposition}

\theoremstyle{definition}

\theoremstyle{remark}

\newtheorem{remark}[theorem]{Remark}

\newtheorem{example}[theorem]{Example}

\newdimen\AAdi%
\newbox\AAbo%
\def\AAk#1#2{\setbox\AAbo=\hbox{#2}\AAdi=\wd\AAbo\kern#1\AAdi{}}%

\makeatletter
\def\eqlabel#1{\def\@currentlabel{#1}}

\def\formula#1{\def\@tempa{#1}\let\@tempb\theequation\def\theequation{%
\hbox{#1}}\def\@currentlabel{(\theequation)}$$}
\def\endformula{\leqno\hbox{(\@tempa)}$$\@ignoretrue\let\theequation\@tempb}

\def\given{\hskip5\p@\relax\vrule\@width.4\p@\hskip5\p@\relax}

\newcommand{\open}[1]{%
\par\normalfont\topsep6\p@\@plus6\p@\trivlist\item[\hskip\labelsep\itshape#1%
\@addpunct{.}]\ignorespaces}

\DeclareRobustCommand{\close}[1]{%
  \ifmmode 
  \else \leavevmode\unskip\penalty9999 \hbox{}\nobreak\hfill
  \fi
  \quad\hbox{$#1$}}
\makeatother

\newlength{\toskip}

\def\<{\langle}
\def\>{\rangle}

\def \Var {\textrm{Var}}
\def \Ent {\textrm{Ent}}
\def \Osc {\textrm{Osc}}

\begin{document}
\date{\today}

\title[Weak inequalities for perturbed measures]{Weak functional inequalities for perturbed measures.}

 \author[P. Cattiaux]{\textbf{\quad {Patrick} Cattiaux $^{\spadesuit}$ \, \, }}
\address{{\bf {Patrick} CATTIAUX},\\ Institut de Math\'ematiques de Toulouse. CNRS UMR 5219. \\
Universit\'e de Toulouse,
\\ 118 route
de Narbonne, F-31062 Toulouse cedex 09.} \email{patrick.cattiaux@math.univ-toulouse.fr}

 \author[P. Cordero-Encinar]{\textbf{\quad {Paula} Cordero-Encinar $^{\heartsuit}$ \, \, }}
\address{{\bf {Paula} CORDERO-ENCINAR},\\ Department of Mathematics. Imperial College London. \\
London. UK} \email{paula.cordero-encinar22@imperial.ac.uk}

\author[A. Guillin]{\textbf{\quad {Arnaud} Guillin $^{\diamondsuit}$}}
\address{{\bf {Arnaud} GUILLIN},\\Universit\'e Clermont Auvergne, CNRS, LMBP, F-63000 CLERMONT-FERRAND, FRANCE.} \email{arnaud.guillin@uca.fr}

\maketitle

 \begin{center}

 \textsc{$^{\spadesuit}$  Universit\'e de Toulouse}
\smallskip

 \textsc{$^{\heartsuit}$  Imperial College London}
\smallskip

\textsc{$^{\diamondsuit}$ Universit\'e Clermont-Auvergne}
\smallskip

\end{center}

\begin{abstract}
This paper is a follow up to an article by two of the authors dedicated to the study of Poincar\'e and logarithmic Sobolev inequalities for measures of the form $d\mu = e^{-U} d\nu$ where $e^{-U}$ is seen as a perturbation of $d\nu$. Application to the same functional inequalities for convolution products are then discussed. In the present paper we investigate similar problems for weaker functional inequalities, namely weak Poincar\'e, weighted Poincar\'e, weak log-Sobolev and weighted log-Sobolev inequalities.
\end{abstract}
\bigskip

\section{Introduction}

Functional inequalities are fundamental tools in analysis and probability: they quantify the interplay between geometry of a reference measure, stabilization rates of stochastic dynamics, and concentration/deviation properties of observables. In the reversible setting, the spectral-gap (Poincar\'e) inequality provides a direct route from a coercivity estimate on the Dirichlet form to exponential relaxation to equilibrium for the associated Markov semigroup; see, e.g., the monograph \cite{BakryGentilLedoux2014}. Yet, many measures of current interest in statistics, sampling and machine learning (heavy tails, weak confinement, multimodality, nonconvex energies) fail to satisfy a standard Poincar\'e-quality inequality with finite constant. In such regimes, one expects sub-exponential, or even sub-geometric, rates of convergence, and one must rely on weaker functional inequalities such as weak Poincar\'e or weak logarithmic Sobolev inequalities \cite{Lig,RW,CGG}.

\noindent\textbf{Perturbation viewpoint.}
A recurring theme, both classical and modern, is the \emph{stability under perturbation}:
starting from a reference probability measure $\nu$ (for which some functional inequalities are available),
we consider a perturbed target
\begin{equation}\label{eq:perturb-mu}
d\mu = {e^{-U}}\,d\nu, 
\end{equation}
and ask which inequalities (Poincar\'e, log-Sobolev, weak and weighted variants) remain valid for $\mu$,
and how the corresponding constants or \emph{rate functions} can be controlled in terms of $U$ and properties of $\nu$.
For bounded $U$, classical perturbation techniques already yield robust results (e.g.\ Holley--Stroock type arguments for log-Sobolev inequalities \cite{HolleyStroock1987}). For unbounded perturbations, the question is subtler: the effect of $U$ at infinity must be compared to the concentration/tail regime of $\nu$, and Lyapunov-type arguments become essential \cite{BCG,BBCG}.
This paper follows this perspective in the weak and weighted setting, extending the perturbation program
developed for Poincar\'e and log-Sobolev inequalities in \cite{CGPerturb}, initially motivated by a Bayesian problem,
to weak Poincar\'e, weak log-Sobolev, weighted Poincar\'e and weighted log-Sobolev inequalities,
and discussing convolution and mixture operations motivated by generative modeling.

\subsection{Weak Poincar\'e inequalities and sub-exponential relaxation}

Weak Poincar\'e inequalities (WPI) were introduced to capture convergence rates slower than exponential, by allowing a controlled defect term. A typical formulation is: there exists a non-increasing function $\beta_\mu:(0,\tfrac14]\to(0,\infty)$ such that for all sufficiently regular $f$ and all $s\in(0,\tfrac14]$,
\begin{equation}\label{eq:wpi}
\Var_\mu(f)\;\le\; \beta_\mu(s)\int |\nabla f|^2\,d\mu \;+\; s\,\Osc(f)^2,
\end{equation}
where $\Osc(f)=\sup f-\inf f$.
When $\lim_{s\downarrow 0}\beta_\mu(s)<\infty$, \eqref{eq:wpi} essentially reduces to the classical Poincar\'e inequality; otherwise, $\beta_\mu(s)\to\infty$ encodes the \emph{degree of degeneracy} of the spectral gap and yields sub-geometric rates.

The seminal paper \cite{RW} establishes that, for a large class of Markov semigroups, $L^2$-convergence rates slower than exponential are essentially equivalent to a weak Poincar\'e inequality with an appropriate rate function $\beta_\mu$. Further refinements connect the shape of $\beta_\mu$ to decay of semigroups and concentration properties \cite{WangZhang2005}, and to geometric/functional criteria (capacity, isoperimetry, tail profiles) that allow explicit computations in heavy-tailed regimes \cite{BCR1,CGGR}.
These tools are particularly powerful for distributions such as generalized Cauchy laws or Subbotin-type laws, where Poincar\'e fails but WPI holds with polynomial or logarithmic profiles, respectively \cite{CGGR}.

A key message, visible in the perturbation examples, is that meaningful perturbation results require the growth of $U$ to be compatible with the \emph{concentration scale} of the reference measure (see example \ref{exampweak2}). Roughly speaking, if $\nu$ exhibits only polynomial tails, then allowing $U$ to grow faster than logarithmically can destroy any usable control on $\beta_\mu$; conversely, if the perturbation makes tails lighter in a comparable manner, one may obtain improved rates. This ``match the tails'' principle is a recurring leitmotiv in modern perturbation theory for weak inequalities.

\subsection{Weighted Poincar\'e inequalities and weighted generators}

Weak inequalities are intimately connected to \emph{weighted} inequalities, which replace the uniform Dirichlet form by a spatially varying one. Weighted Poincar\'e-type inequalities have a long history in the study of heavy-tailed measures: even when $C_{ P}(\mu)=\infty$, one may have
\begin{equation}\label{eq:wpi-weighted}
\Var_\mu(f)\;\le\; C_{ P,\omega}(\mu)\int |\nabla f|^2\omega^2\,d\mu
\end{equation}
for an appropriate non-negative weight $\omega$. For Cauchy-type and more general convex measures, Bobkov and Ledoux \cite{BLweight} obtained sharp weighted Poincar\'e and isoperimetric-type inequalities, exhibiting canonical weights that reflect the geometry of tails. Such inequalities can be interpreted as standard Poincar\'e inequalities for a modified (non-uniform) carré du champ, and they naturally lead to dynamics whose diffusion strength depends on the position.

This motivates the study of diffusions with generator
\begin{equation}\label{eq:generator}
L^\omega f
=\omega^2 \Delta f +(\nabla\omega^2-\omega^2\nabla V)\cdot\nabla f,
\end{equation}
which can be written in divergence form as
\begin{equation}\label{eq:div}
L^\omega f = e^{V}\,\nabla\!\cdot\!\big(e^{-V}\omega^2\nabla f\big).
\end{equation}
Under standard integrability and boundary conditions, the probability measure
\begin{equation}\label{eq:inv}
d\mu(x) = Z^{-1}e^{-V(x)}\,dx
\end{equation}
is invariant and the operator is symmetric in $L^2(\mu)$, with Dirichlet form
$\mathcal{E}_\omega(f,f) = \int \omega^2|\nabla f|^2\,d\mu$.
Consequently, a weighted Poincar\'e inequality \eqref{eq:wpi-weighted} implies exponential decay of the variance along the semigroup associated with $L^\omega$ in the natural energy scale.
More generally, when only weak inequalities are available, one obtains sub-exponential relaxation rates governed by $\beta_\mu$, and one may sometimes \emph{construct} explicit weights from $\beta_\mu$ via capacitary criteria and tail rearrangements, yielding a converse weighted inequality that is optimal for the given WPI profile (see Corollary \ref{corweaktoweight}).

From an algorithmic viewpoint, $\omega$ plays the role of a \emph{preconditioner}: it changes the local diffusivity, thereby reshaping exploration of the state space. This idea resonates with recent work on optimizing or learning preconditioning metrics for Langevin dynamics and sampling (in particular, selecting diffusion coefficients to accelerate mixing while preserving a target invariant measure) \cite{cui2025optimalriemannianmetricpoincare,LelievrePavliotisRobinSantetStoltz2025OptimizingDiffusion}.
In heavy-tailed settings, such preconditioning can be essential: the unweighted Langevin diffusion may converge extremely slowly (or may not have a spectral gap), while a properly weighted dynamics can exhibit substantially improved functional-inequality structure.

\subsection{Why weak/weighted inequalities matter for diffusion-based generative modeling}

The recent success of diffusion/score-based generative models has put \emph{families of intermediate distributions} and \emph{annealed Langevin-type samplers} at the center of modern high-dimensional sampling. The classical diffusion-model pipeline defines a forward noising mechanism that progressively transforms a complex data distribution into a simple reference distribution (often Gaussian), then generates samples by simulating a reverse-time dynamics involving the \emph{score} (the gradient of the log-density) of the intermediate marginals \cite{SohlDickstein2015,HoJainAbbeel2020,song2021scorebased}.

A key theoretical and practical difficulty is that intermediate distributions need not be log-concave, and may inherit heavy tails or multi-modality from data. In this regime, the mixing properties of the Markov chains used inside samplers (e.g.\ predictor--corrector methods using Langevin \emph{corrector} steps) are not well described by classical spectral-gap/log-Sobolev theory alone. Weak and weighted functional inequalities offer a natural alternative: they provide quantitative control on stabilization when spectral gaps fail, and they can be stable under perturbations that mirror the operations used in diffusion models (smoothing/noising, reweighting, convolution, and annealing).

This connection is made explicit in the recent line of work on \emph{diffusion annealed Langevin dynamics}, where the target path of distributions is defined via Gaussian (or heavy-tailed) convolutions and sampling is performed through annealed Langevin steps; functional inequalities for intermediate targets are instrumental in proving regularity properties of scores (e.g.\ Lipschitz continuity) and deriving non-asymptotic sampling guarantees \cite{Corderoetal}.
In particular, perturbation estimates for Poincar\'e-type constants along interpolating paths provide a mechanism to control the stability of the score field and the discretization error, complementing the classical convex-logconcave sampling theory \cite{Dal17}.
Hence, understanding \emph{how weak/weighted inequalities behave under perturbations and convolutions} is not only of independent analytic interest but also directly relevant to the theoretical underpinnings of modern generative pipelines.

\subsection{Outline and contributions}

The present paper develops a perturbation theory for weak and weighted functional inequalities, organized around three complementary mechanisms:
(i) refined versions of Holley--Stroock-type perturbation arguments for weak Poincar\'e inequalities (Section \ref{secweakP}), weighted Poincar\'e inequalities (Section \ref{secweiP}), and weaker logarithmic Sobolev inequalities (Section \ref{secweakLS}),
(ii) Lyapunov-based sufficient perturbation conditions yielding explicit rate functions and stability under unbounded perturbations, for all this weaker inequalities,
and (iii) capacitary criteria linking weak Poincar\'e inequalities to (converse) weighted Poincar\'e inequalities, including explicit constructions of weights from $\beta_\mu$, (Section \ref{secWPtowP}).
We also discuss convolution products and related operations, motivated by the intermediate laws appearing in diffusion-based generative modeling, and highlight how these results interface with weighted generators of type \eqref{eq:generator} and their long-time behavior. The generalized Cauchy distributions and Subbotin (exponential power) distributions will serve as guiding examples.

\section{Perturbation of Weak Poincar\'e inequalities.}\label{secweakP}

The setting of the weak Poincar\'e inequality was introduced above, but to enhance readability, we restate the definition: $\mu$ satisfies a weak Poincar\'e inequality if there exists a non-increasing function $\beta_\mu: \mathbb R^+ \mapsto \mathbb R^+$ such that for all $s>0$ and all smooth enough function $f$, 
\begin{equation}\label{eqWPdef}
\Var_\mu(f) \leq \beta_\mu(s) \int |\nabla f|^2 d\mu + s \, \Osc^2(f) \, .
\end{equation}
 Recall that $\Var_\mu(f) \leq \Osc^2(f)/4$ so that the previous inequality is trivial for $s\geq 1/4$. If $X$ is a random variable with distribution $\mu$ we write $\beta_X=\beta_\mu$. We shall call this function the \emph{rate} of the weak inequality.

  Below we collect some properties of these weak inequalities.
\begin{proposition}\label{proppropweakP}
It holds
\begin{enumerate}
\item[(1)] \quad for all $x \in \mathbb R^d$, $\beta_{x+Z} = \beta_Z$,
\item[(2)] \quad for any $\lambda \in \mathbb R$, $\beta_{\lambda Z}=\lambda^2 \, \beta_Z$,
\item[(3)] \quad if $Z_1,...,Z_n$ are independent, $\beta_{(Z_1,...,Z_n)}(s) \leq \max_j \beta_{Z_j}(s/n)$,
\item[(4)] \quad if $Z_1$ and $Z_2$ are independent, $\beta_{Z_1+Z_2}(s) \leq \beta_{Z_1}(s/2)+\beta_{Z_2}(s/2)$,
\item[(5)] \quad if $\mu_n$ weakly converges to $\mu$, $\beta_\mu \leq \liminf \beta_{\mu_n}$.
\end{enumerate}
\end{proposition}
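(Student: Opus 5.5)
Each item follows from the definition \eqref{eqWPdef}, with $\beta$ understood as the optimal (smallest) rate. For items (1) and (2) I will use a change of variables. If $f$ is smooth, put $g(z)=f(x+z)$. Then $\Var(f(x+Z))=\Var(g(Z))$, $\nabla g(z)=\nabla f(x+z)$ and $\Osc(g)=\Osc(f)$, so the inequality for $Z$ applied to $g$ is exactly the inequality for $x+Z$ applied to $f$. The same argument run backwards gives equality of the rates. For (2) with $\lambda\neq 0$, put $g(z)=f(\lambda z)$. Then $|\nabla g(z)|^2=\lambda^2|\nabla f(\lambda z)|^2$, so $\beta_Z(s)$ becomes $\lambda^2\beta_Z(s)$, and the oscillation term is unchanged. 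The reverse inequality follows from $1/\lambda$. The case $\lambda=0$ is trivial, since then the variance vanishes.

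For (3) I will tensorise via the Efron--Stein (martingale) decomposition of the variance of a product measure:
\[
\Var_{\mu_1\otimes\cdots\otimes\mu_n}(f)\le \sum_{j=1}^n \mathbb E\big[\Var_{\mu_j}(f \mid z_i,\ i\neq j)\big].
\]
Apply the weak Poincar\'e inequality for $\mu_j$ in the variable $z_j$, with parameter $s/n$, to each conditional variance. The oscillation in $z_j$ with the other coordinates frozen is at most $\Osc(f)$. Summing over $j$ gives
\[
\Var(f)\le \max_j\beta_{Z_j}(s/n)\int\sum_j|\nabla_j f|^2\,d\mu+n\cdot\frac{s}{n}\Osc^2(f),
\]
which is (3). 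For (4), apply (3) with $n=2$ to the function $(z_1,z_2)\mapsto f(z_1+z_2)$. Its oscillation is at most $\Osc(f)$, and its partial gradients both equal $\nabla f(z_1+z_2)$. This gives
\[
\Var(f(Z_1+Z_2))\le \beta_{Z_1}(s/2)\,\mathbb E|\nabla f|^2+\beta_{Z_2}(s/2)\,\mathbb E|\nabla f|^2+s\Osc^2(f).
\]
Here it is better to keep separate constants in the tensorisation step, rather than taking the maximum, which is why the bound is a sum.

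Item (5) is the delicate one, and I expect it to be the main obstacle. Fix $s$ and assume $\liminf\beta_{\mu_n}(s)<\infty$; otherwise there is nothing to prove. Pass to a subsequence along which $\beta_{\mu_n}(s)$ tends to the liminf. For $f$ smooth, bounded, with bounded gradient (the natural class of test functions), the maps $f$, $f^2$ and $|\nabla f|^2$ are bounded and continuous. Weak convergence then gives $\Var_{\mu_n}(f)\to\Var_\mu(f)$ and $\int|\nabla f|^2d\mu_n\to\int|\nabla f|^2d\mu$, while $\Osc(f)$ does not depend on $n$. Passing to the limit in the inequality for $\mu_n$ yields the inequality for $\mu$ with rate $\liminf\beta_{\mu_n}(s)$.

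The remaining issue is to make sure this class of test functions suffices to define $\beta_\mu$. For a general bounded smooth $f$, I plan a truncation/regularisation argument. Take $f\chi_R$-type approximations, or compose with a cutoff, so that the gradient becomes bounded while the oscillation does not increase. Then conclude by monotone convergence in $\int|\nabla f|^2d\mu$ and dominated convergence in the variance. This is where I expect the care to be needed.
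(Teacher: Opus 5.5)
Your proposal is correct. For (1), (2) and (5) the paper only says ``immediate'', and for (3) it cites Theorem~5 of \cite{BCR2}. Your change of variables, your Efron--Stein tensorisation (the same decomposition the paper uses in the proof of Proposition~\ref{propweightedPIproperties}) and your passage to the limit along a subsequence realising the $\liminf$ are the natural way to fill these in.

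The genuine difference is in (4).

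The paper uses the exact law of total variance, $\Var(f(Z_1+Z_2))=\mathbb E[\Var(f(Z_1+Z_2)\mid Z_2)]+\Var(G(Z_2))$ with $G(z_2)=\mathbb E f(Z_1+z_2)$. It applies the weak inequality for $Z_2$ to the averaged function $G$, and then needs $|\nabla G|^2\le\mathbb E|\nabla f(Z_1+\cdot)|^2$ and $\Osc(G)\le\Osc(f)$. This requires exchanging $\nabla$ and $\mathbb E$, so the paper argues first for regular $f$ and then concludes through an approximation and item (5).

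You instead bound $\Var(G(Z_2))$ by $\mathbb E[\Var(f(Z_1+Z_2)\mid Z_1)]$, which is Jensen at the level of variances, exactly what Efron--Stein encodes. You then apply the weak inequality for $Z_2$ only to the sections $z_2\mapsto f(z_1+z_2)$, which are as regular as $f$. What this buys you:
\begin{itemize}
\item you never differentiate under the expectation;
\item (4) no longer leans on (5);
\item (3) and (4) both come out of a single tensorised inequality, in which you rightly keep the constants $\beta_{Z_j}(s/n)$ separate rather than taking the maximum.
\end{itemize}
The final bound $\beta_{Z_1}(s/2)+\beta_{Z_2}(s/2)$ is the same. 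The paper's version is simply a direct conditioning argument that avoids quoting Efron--Stein.

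One small correction to your plan for (5): a plain cutoff $f\chi_R$ can enlarge the oscillation when $0\notin[\inf f,\sup f]$. Use instead $c+(f-c)\chi_R$ with $c\in[\inf f,\sup f]$; every term of \eqref{eqWPdef} is invariant under adding constants. Take $\|\nabla\chi_R\|_\infty\le 1/R$. Then $\|\nabla f_R\|_{\mathbb L^2(\mu)}\le\|\nabla f\|_{\mathbb L^2(\mu)}+\Osc(f)/R$, which gives the needed limit directly rather than by monotone convergence.
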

The proof is provided in Appendix \ref{appendix_section_2}.

Weak Poincar\'e inequalities are general enough to be satisfied by almost every probability measure, more precisely.
\begin{proposition}\label{propWPgene}
Let $\mu(dx)=e^{-V(x)} dx$ be a probability measure such that $V$ is locally bounded (for instance continuous). Then $\mu$ satisfies a weak Poincar\'e inequality where, for $d\geq 2$, $$\beta_\mu(s) \, \leq \, \frac{d+2}{d(d-1)} \, R^2(s) \, e^{h(R(s))} $$ with $$h(R)=\Osc_{B(0,R)} V \; , \; R(s)=G^{-1}(1/(1+s)) \; , \; G(u) = \mu(B(0,u)) \, .$$ For $d=1$ the pre-factor is replaced by $4/\pi^2$.
\end{proposition}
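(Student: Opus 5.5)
The plan is to combine a Poincaré inequality on the ball $B=B(0,R)$ for the restricted measure with a bound on the mass of $\mu$ outside the ball. The Poincaré inequality on $B$ comes from a Holley--Stroock comparison with the uniform measure, and the outer mass is absorbed into the oscillation term.

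\textbf{Step 1: splitting the variance.} Fix $R>0$ and write $\mu_R=\mu(\cdot\,|\,B)$ for $\mu$ conditioned on $B$. Use the representation
$$\Var_\mu(f)=\tfrac12\iint (f(x)-f(y))^2\,\mu(dx)\mu(dy).$$
Split the double integral into the part on $B\times B$ and the part where at least one of $x,y$ lies outside $B$.

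\textbf{Step 2: the inner part.} On $B\times B$, the contribution equals $G(R)^2\,\Var_{\mu_R}(f)\le \Var_{\mu_R}(f)$. The measure $\mu_R$ has density proportional to $e^{-V}$ on $B$, so with respect to the normalized Lebesgue measure $\lambda_B$ on $B$ its density is bounded above and below by constants whose ratio is at most $e^{h(R)}$. The Holley--Stroock argument (comparing $\Var_{\mu_R}(f)\le \int (f-m)^2d\mu_R$ with $m=\int f\,d\lambda_B$, and comparing the Dirichlet forms) then gives
$$\Var_{\mu_R}(f)\le e^{h(R)}\,C_P(\lambda_B)\int_B |\nabla f|^2\,d\mu_R .$$
Next use the Poincaré constant of the uniform measure on a convex ball. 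By Payne--Weinberger this is $R^2\cdot 4/\pi^2$, because the diameter is $2R$. This gives exactly the $d=1$ prefactor $4/\pi^2$. For $d\ge2$, the prefactor $\frac{d+2}{d(d-1)}R^2$ should be the bound $1/\lambda_1^{N}(B_R)$, where $\lambda_1^N$ is the first nonzero Neumann eigenvalue. I would check this against a known lower bound on the Neumann eigenvalue of the unit ball; note that $d(d-1)/(d+2)$ is less than $\lambda_1^N(B_1)$, which equals $j'^2_{d/2,1}$ in terms of Bessel-function zeros. Finally, $\int_B|\nabla f|^2 d\mu_R\le G(R)^{-1}\int|\nabla f|^2d\mu$. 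Combined with the factor $G(R)^2$ from Step 1, the net constant is $G(R)\le 1$.

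\textbf{Step 3: the outer part.} The contribution from pairs with $x\notin B$ or $y\notin B$ is at most
$$\tfrac12\,\Osc^2(f)\,\bigl(1-G(R)^2\bigr)\le \Osc^2(f)\,(1-G(R)).$$
Choose $R=R(s)$ so that $1-G(R)\le s$; the choice $G(R)=1/(1+s)$ gives $1-G=s/(1+s)\le s$. The extra room left by this choice can be used to improve the crude factor, for instance by instead centering $f$ at the mean over the ball.

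\textbf{Main obstacle.} I expect the main difficulty to be obtaining the precise dimensional constant $\frac{d+2}{d(d-1)}$ for the Neumann Poincaré constant of the ball. If a citation is unavailable, I would try a direct variational bound: apply the Poincaré inequality to $\lambda_B$ via the radial and angular decomposition, using the spherical Poincaré constant $1/(d-1)$ on $S^{d-1}$ together with a one-dimensional Hardy-type bound in the radial variable $r$, where the weight $r^{d-1}$ produces the $(d+2)/d$ factor. The remaining bookkeeping is routine.
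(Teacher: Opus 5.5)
Your proof is correct and follows essentially the paper's route: the paper just invokes Theorem~3.1 of R\"ockner--Wang, which is this same localization to a ball of $\mu$-mass $1/(1+s)$, with a Holley--Stroock comparison to the uniform law on the ball and the exterior mass absorbed into $s\,\Osc^2(f)$ (your symmetric splitting even only needs $G(R)\ge 1-s$), and it likewise takes $C_P(\lambda_{B(0,R)})\le \frac{d+2}{d(d-1)}R^2$ from the literature rather than proving it. One harmless slip: for $d\ge 3$ the first nonzero Neumann eigenvalue of $B(0,1)$ is the square of the first positive zero of $\bigl(r^{1-d/2}J_{d/2}(r)\bigr)'$, not $(j'_{d/2,1})^2$ (the two coincide only for $d=2$), but the argument only needs the true inequality $\lambda_1^N(B(0,1))\ge d(d-1)/(d+2)$.
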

The result is an immediate consequence of Theorem 3.1 in \cite{RW}, together with the known upper bound (asymptotically sharp as $d \to +\infty$) $$C_P(\lambda_R) \leq \, \frac{d+2}{d(d-1)} R^2$$ for $\lambda_R$ the uniform probability distribution on the euclidean ball $B(0,R)$ if $d\geq 2$ (see e.g. \cite{BJM} Corollary 4.1 or \cite{CGWradial} Example 5.5). The case $d=1$ is well known. 

Actually the previous bound is not optimal in most of the specific cases that have been studied. We will only describe the situation for our two families of examples below.
\begin{example}\label{exampweak}
\begin{enumerate}
\item[]
\item[(1)] \textbf{Generalized Cauchy distribution.} 

Let $\mu(dx)= z_\alpha^{-1} (1+|x|^2)^{- (\alpha+d)/2} \, dx$ for $\alpha>0$. 

The best known result is that $\beta_\mu(s)\leq  c_\alpha \,  s^{-2/\alpha}$ for $s\leq 1/4$. It is shown in \cite{CGGR} Proposition 4.9 in the more general context of $\kappa$ concave measures (see \cite{Bobkappa} for this notion). The proof relies on weighted Poincar\'e inequalities, which we will discuss later. Notice that there is a typo (a minus is missing) in subsection 4.2.2 of \cite{BCG} where a slightly worse result is obtained ($s^{-2/\alpha'}$ for any $\alpha'<\alpha$) using weak Lyapunov-Poincar\'e inequalities introduced therein. The use of the general result in Proposition \ref{propWPgene} furnishes an intricate and in any case worse negative power. Of course the dimension dependence is hidden in the value of the constant $c_\alpha$.

We shall see below why $- 2/\alpha$ is the best possible negative power, i.e. why $s^{p} \beta_\mu(s) \to 0$ as $s \to 0$ for any $p>2/\alpha$. Notice that we recover the finiteness of $\beta_\mu(0)$ in the limit $\alpha \to + \infty$, provided we are able to control the pre-factor.
\item[]
\item[(2)] \textbf{Subbotin distributions.} 

Let $\mu(dx) = z_{\alpha} \, e^{-|x|^\alpha} dx$ for some $\alpha \in (0,1)$ (for $\alpha \geq 1$ the usual Poincar\'e inequality is satisfied).

In this case $\beta_\mu(s) \leq c_\alpha \, \ln^{\frac{2(1-\alpha)}{\alpha}}(1/s)$ for $s\leq 1/4$. This is shown (again in a more general framework replacing $|x|$ by any convex function) in Proposition 4.11 of \cite{CGGR}. The same result is obtained in \cite{BCG} subsection 4.2.1. Using Proposition \ref{propWPgene} furnishes the slightly worse exponent $4(1-\alpha)/\alpha$.
\end{enumerate}
\hfill $\diamondsuit$
\end{example}
If a Poincar\'e inequality implies exponential concentration of measure, a weak Poincar\'e inequality also implies some concentration property. The following result is part of Theorem 8 in \cite{BCR2}.
\begin{proposition}\label{propWPconcent}
If $\mu$ satisfies a weak Poincar\'e inequality with rate function $\beta_\mu$, then for all $L$-Lipschitz function $G$ with median $m_G$, $$\mu(|G-m_G|>a)\leq 6 \, \Theta(a/L)$$ where $$\Theta(u) = \inf\{s\in(0,1/4] \; ; \; u\geq 4 \, \sqrt{\beta_\mu(s)} \, \ln(1/s)\} \, .$$
\end{proposition}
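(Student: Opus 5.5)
Our approach is the classical Herbst/Aida--Stroock iteration for exponential moments, with the weak Poincar\'e inequality \eqref{eqWPdef} used for a bounded truncation of $G$; the defect term $s\,\Osc^2$ is then controlled by the truncation level. By homogeneity we take $L=1$, and by (1) of Proposition \ref{proppropweakP}-type translation invariance of the statement we take $m_G=0$. It suffices to bound $\mu(G>a)$, since $\mu(G<-a)$ is handled by applying the same argument to $-G$. The factor $6$ will absorb the two sides together with the constants lost in the iteration.

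\textbf{Step 1 (truncation and the weak inequality).} For a level $a>0$ we set $G_a=\max(-a,\min(G,a))$. This function is $1$-Lipschitz, satisfies $\Osc(G_a)\le 2a$, and still has median $0$. For $\lambda>0$ we apply \eqref{eqWPdef} to $f=e^{\lambda G_a/2}$, for which $\int|\nabla f|^2d\mu\le \frac{\lambda^2}{4}\int e^{\lambda G_a}d\mu$ and $\Osc^2(f)\le e^{\lambda a}$. Writing $\Lambda(\lambda)=\int e^{\lambda G_a}\,d\mu$, this gives
\[
\Lambda(\lambda)-\Lambda(\lambda/2)^2\le \frac{\lambda^2\beta_\mu(s)}{4}\Lambda(\lambda)+s\,e^{\lambda a}.
\]
We choose $\lambda$ with $\lambda^2\beta_\mu(s)\le 1$, for instance $\lambda = 1/\sqrt{\beta_\mu(s)}$. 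Then
\[
\Lambda(\lambda)\le \tfrac43\Lambda(\lambda/2)^2+\tfrac43 s\,e^{\lambda a}.
\]

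\textbf{Step 2 (iteration).} We iterate the recursion over $\lambda_k=\lambda 2^{-k}$, $k=0,\dots,n$. The bottom term $\Lambda(\lambda_n)$ is close to $1$ for large $n$, since $G_a$ is bounded with median $0$; one can instead use directly $\int|G_a|\,d\mu\lesssim\sqrt{\beta}$, which follows from the weak inequality itself. The accumulated products $\prod (4/3)^{2^k}$ give the classical bound $\Lambda(\lambda)\le C$ for the main term, as in the Poincar\'e case. The defect terms contribute at most $C' s\,e^{\lambda a}$ after being squared up the chain. Here we expect the main obstacle: squaring $s e^{\lambda_k a}$ repeatedly must not blow up. The point is that $(s e^{\lambda_{k} a})^{2}$ is comparable to $s^2 e^{\lambda_{k-1}a}$, so the defect stays of order $s\,e^{\lambda a}$ as long as $s e^{\lambda a}\le 1$. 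We therefore impose this condition and carry it through the induction by the elementary inequality $(x+y)^2\le 2x^2+2y^2$.

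\textbf{Step 3 (Chebyshev and choice of $s$).} The iteration gives $\Lambda(\lambda)\le C''$ under the condition $s e^{\lambda a}\lesssim 1$, which is not compatible with a small tail directly. So we instead apply Chebyshev at a level $a$ while truncating at the larger level $2a$:
\[
\mu(G> a)\le e^{-\lambda a}\Lambda(\lambda)\le e^{-\lambda a}\bigl(C+C's e^{2\lambda a}\bigr).
\]
We then tune the parameters. When $a\ge 4\sqrt{\beta_\mu(s)}\ln(1/s)$ and $\lambda=1/\sqrt{\beta_\mu(s)}$, we have $e^{-\lambda a}\le s^{4}$, while the defect term $s e^{\lambda a}$ is too large. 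To fix this we reduce $\lambda$ so that $\lambda a=\ln(1/s)$, i.e. $\lambda=\ln(1/s)/a\le 1/(4\sqrt{\beta_\mu(s)})$, which is admissible since it is smaller than $1/\sqrt{\beta_\mu(s)}$. Both terms then become of order $s$, and we obtain $\mu(G>a)\le 3s$. Combining the two sides and taking the infimum over admissible $s$ yields $6\,\Theta(a)$. The bookkeeping of constants in Step 2, so that the total constant is indeed $3$ per side, is the delicate part; if it fails, we would follow the route of \cite{BCR2} Theorem 8, which we recall this result is part of.
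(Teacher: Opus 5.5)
Step 3 does not work, and it carries the whole statement. Truncating at $2a$ makes the defect $s\,e^{2\lambda a}$, so Chebyshev gives $\mu(G>a)\le C e^{-\lambda a}+C's\,e^{\lambda a}$. With your choice $\lambda a=\ln(1/s)$ the first term is $Cs$, but the second is $C's\cdot s^{-1}=C'$, which is of order one, not of order $s$. Moreover $s\,e^{2\lambda a}=s^{-1}$ violates the very hypothesis ($s\,e^{\lambda\cdot(\mathrm{truncation\ level})}\lesssim 1$) under which Step 2 was supposed to bound $\Lambda(\lambda)$. More generally, truncating at $A>a$ and keeping $s\,e^{\lambda A}\lesssim1$ forces $e^{-\lambda a}\gtrsim s^{a/A}$. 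So the $2a$ device gives at best $\mu(G>a)\lesssim\sqrt s$, a much weaker statement.

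Your reason for leaving the level-$a$ truncation of Step 1 was also mistaken. With $\lambda a=\ln(1/s)$ it gives $\mu(G\ge a)\le s\,\Lambda(\lambda)$, but you would then need a universal bound $\Lambda(\lambda)\le 3$, and Step 2 does not deliver it:
\begin{itemize}
\item The product $\prod_k(4/3)^{2^k}$ diverges; the level-$k$ factor must be $(1-\lambda^2\beta_\mu(s)4^{-k-1})^{-1}$.
\item Applying $(x+y)^2\le 2x^2+2y^2$ at every level puts a factor $2$ on the main term at each depth $k$, raised to a power of order $2^k$, which again diverges.
\item The defect introduced at depth $k$ contributes roughly $\exp(c\,2^k s)$ once propagated to the top. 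So the iteration must stop at depth about $\log_2(1/s)$, with a separate estimate of $\Lambda(\lambda_n)^{2^n}\approx e^{\lambda\mu(G_a)}$. Here $\mu(G_a)\neq 0$ in general (the median, not the mean, vanishes), and the weak inequality only gives $\Var_\mu(G_a)\le\beta_\mu(s)+4a^2s$.
\end{itemize}
So neither the constant $3$ per side, which you flag yourself, nor even an $O(s)$ tail is established.

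The paper does not reprove this statement; it imports it from \cite{BCR2}. The idea you are missing is to bypass exponential moments and iterate the weak inequality directly on level sets, Gromov--Milman style. This produces exactly the constants in the statement. Take $L=1$, $m_G=0$ and $u(r)=\mu(G>r)$, and apply \eqref{eqWPdef} to $f=\min(1,(G-r)_+/\epsilon)$ for $r\ge 0$. Then $\Osc(f)\le 1$ and $\int|\nabla f|^2d\mu\le\epsilon^{-2}(u(r)-u(r+\epsilon))$. Also $\Var_\mu(f)\ge\mu(f=0)\,\mu(f=1)\ge\frac12 u(r+\epsilon)$, because $\mu(G\le r)\ge\frac12$. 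For $\epsilon=\sqrt{2\beta_\mu(s)}$ this reads $u(r+\epsilon)\le\frac12u(r)+s$, hence $u(k\epsilon)\le 2s+2^{-k-1}$ since $u(0)\le\frac12$. Taking the least $k$ with $2^{-k-1}\le s$ gives $k\le\log_2(1/s)$. Therefore $k\epsilon\le\frac{\sqrt2}{\ln 2}\sqrt{\beta_\mu(s)}\ln(1/s)\le 4\sqrt{\beta_\mu(s)}\ln(1/s)$, so $\mu(G>a)\le 3s$ whenever $a\ge 4\sqrt{\beta_\mu(s)}\ln(1/s)$. Doing the same for $-G$ and taking the infimum over admissible $s$ yields $6\,\Theta(a)$.
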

Notice that in the case of Subbotin distributions the concentration function obtained with the $\beta_\mu$ in example \ref{exampweak} (2) is exactly the good one, while in the generalized Cauchy case there is an extra $\ln$, but the power function $s^{-2/\alpha}$ is the good one too.

We turn to the perturbation problem. We will only state one possible result, other ones should be obtained by directly applying Proposition \ref{propWPgene} to the perturbed measure.
\begin{theorem}\label{thmWPperturb}
Let $\mu(dx)= e^{-V(x)}dx=e^{-(U+W)(x)} dx$ for smooth $U$ and $W$ be a probability measure. Denote $\nu(dx)=e^{-W(x)}dx$ supposed to be a probability measure too. 

Assume that $U(x) \geq m_U$ (in other words $e^{-U}$ is bounded, notice that $m_U\leq 0$) and for simplicity that $m_U=U(0)$. Denote $med_\nu$ a $\nu$-median of $|x|$.

Then $$\beta_\mu(s) \leq 2 e^{\Osc_{B(0,R)} U} \beta_\nu(e^{m_U} s/7)$$  with $$R=1+med_\nu+4\sqrt{\beta_\nu(u)} \, \ln(1/u) \quad \textrm{ and } \quad u=\frac{s}{1+2\beta_\nu(s)} \, .$$
\end{theorem}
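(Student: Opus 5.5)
The plan is a localized Holley--Stroock argument: cut $\mathbb R^d$ into the ball $B=B(0,R)$, where $U$ has bounded oscillation, and its complement, where the weak Poincar\'e inequality of $\nu$ together with the concentration estimate of Proposition \ref{propWPconcent} makes $\nu(B^c)$ small. Since $\mu = e^{-U}\nu$ and $U\geq m_U$, we have $\mu \leq e^{-m_U}\nu$. We will use the variational form $\Var_\mu(f)\leq \int (f-c)^2 d\mu$ for every constant $c$.

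\textbf{Step 1: split the variance.} Choose $c=m_\nu(f)$ (the $\nu$-mean) and write
\[
\Var_\mu(f)\leq \int_B (f-c)^2 e^{-U}d\nu + \int_{B^c}(f-c)^2 e^{-U} d\nu .
\]
The second term is at most $e^{-m_U}\Osc^2(f)\,\nu(B^c)$.

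\textbf{Step 2: control the ball term.} On $B$ we have $e^{-U}\leq e^{-m_U}$, so the first term is at most $e^{-m_U}\Var_\nu(f)$. Apply \eqref{eqWPdef} for $\nu$ at a level $t$ to get
\[
\Var_\nu(f)\leq \beta_\nu(t)\int |\nabla f|^2 d\nu + t\,\Osc^2(f).
\]
The energy $\int|\nabla f|^2 d\nu$ has to be converted back to $\mu$, which is the main obstacle, because $d\nu = e^{U}d\mu$ and $e^{U}$ is unbounded.

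Here is what I would try first. Replace $f$ by a truncated or localized version $g$, equal to $f$ on $B$ and frozen outside a slightly larger region: for instance $g$ is $f$ composed with the radial retraction onto $B(0,R)$, which is $1$-Lipschitz, so $\Osc(g)\leq \Osc(f)$. Then $\int|\nabla g|^2d\nu\leq \int_B|\nabla f|^2 d\nu \leq e^{\max_B U}\int_B|\nabla f|^2 d\mu$. Since $\max_B U = m_U+\Osc_B U$, combining with the $e^{-m_U}$ prefactor yields exactly $e^{\Osc_B U}$. Running the argument with $g$ in place of $f$ on $B$, and paying the outside region by oscillation, produces a bound of the form
\[
\Var_\mu(f)\leq 2e^{\Osc_B U}\beta_\nu(t)\int|\nabla f|^2d\mu + \bigl(c_1 e^{-m_U} t + c_2 e^{-m_U}\nu(B^c)\bigr)\Osc^2(f).
\]
The factor $2$ comes from $(a+b)^2\leq 2a^2+2b^2$ when comparing $f$ with $g$. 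An alternative, if the retraction has trouble, is to compare the variance on $B$ under $\mu$ and under $\nu|_B$ directly (Holley--Stroock on the ball), plus a term for the normalization $\nu(B)$.

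\textbf{Step 3: concentration.} Bound $\nu(B^c)$ with Proposition \ref{propWPconcent} applied to the $1$-Lipschitz function $G(x)=|x|$, with median $med_\nu$. With $a = 1+4\sqrt{\beta_\nu(u)}\ln(1/u)$ one has $a\geq 4\sqrt{\beta_\nu(u)}\ln(1/u)$, hence $\Theta(a)\leq u$ and
\[
\nu(|x|>R)\leq \nu\bigl(\bigl||x|-med_\nu\bigr|> a\bigr)\leq 6u .
\]
The choice $u=s/(1+2\beta_\nu(s))$ makes $u\leq s$, so $6u\leq 6s$; this is where the constant $7 = 1 + 6$ comes from. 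The refinement involving $\beta_\nu(s)$ in $u$ is presumably needed to absorb a cross term in which the gradient contribution near the boundary or the normalization produces a factor $\beta_\nu$.

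\textbf{Step 4: match the rate.} Finally choose $t = e^{m_U}s/7$ and collect the defect terms into $s\,\Osc^2(f)$. The remaining work is bookkeeping the constants $2$ and $7$, possibly via the monotonicity of $\beta_\nu$.
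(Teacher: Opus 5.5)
Your skeleton (split off a ball, use $\mu\le e^{-m_U}\nu$ there, control the tail with Proposition~\ref{propWPconcent}) is the right one. However, the localization in Step 2 fails, and that step is where the theorem's content lies.

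For $d\ge 2$, the function $g=f\circ\pi_R$ is not frozen outside $B(0,R)$. For $|x|>R$ one has $\nabla g(x)=\frac{R}{|x|}\bigl(I-\frac{x x^{T}}{|x|^2}\bigr)\nabla f(Rx/|x|)$, which is the tangential gradient of $f$ on the sphere $\{|y|=R\}$. So the claim $\int|\nabla g|^2d\nu\le\int_B|\nabla f|^2d\nu$ is false. The exterior contribution is a weighted surface integral of the tangential energy of $f$ on that sphere, and the bulk energy $\int_B|\nabla f|^2d\mu$ does not control it. For instance, take $f$ oscillating tangentially at frequency $k$ in a shell of width $R/k$ just below $|x|=R$: the bulk energy grows like $k$, the surface term like $k^2$, while $\Osc f$ stays bounded. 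The retraction argument is only valid for $d=1$.

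Your fallback does not help either. Holley--Stroock on the ball needs a weak Poincar\'e inequality for the normalized restriction of $\nu$ to $B(0,R)$, and that is not inherited from $\nu$. Proving it would require the same extension of $f|_B$ that the retraction was supposed to provide.

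The missing idea is a multiplicative Lipschitz cut-off, and its price is a cross term. The paper takes $\chi=\mathbf 1_{|x|\le R-1}+(R-|x|)\mathbf 1_{R-1\le|x|\le R}$ and chooses $a$ with $\int(f-a)\chi\,d\nu=0$. Then $\int(f-a)^2\chi^2d\mu\le e^{-m_U}\Var_\nu((f-a)\chi)$, and $\int(f-a)^2(1-\chi^2)d\mu\le\Osc^2(f)\,\mu(|x|>R-1)$.

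Now apply \eqref{eqWPdef} for $\nu$ to $(f-a)\chi$. Note that $\Osc((f-a)\chi)\le\Osc f$ because $f-a$ changes sign. Use $|\nabla((f-a)\chi)|^2\le 2\chi^2|\nabla f|^2+2(f-a)^2|\nabla\chi|^2$:
\begin{itemize}
\item The first piece gives $2e^{\Osc_{B(0,R)}U}\beta_\nu(s)\int|\nabla f|^2d\mu$. This product rule, not a comparison of $f$ with $g$, is where the factor $2$ comes from.
\item The second piece gives the cross term $2\beta_\nu(s)e^{-m_U}\Osc^2(f)\,\nu(R-1<|x|\le R)$.
\end{itemize}
The total defect is $e^{-m_U}\bigl((1+2\beta_\nu(s))\,\nu(|x|>R-1)+s\bigr)\Osc^2(f)$.

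This explains the form of the statement. The ``$1+$'' in $R$ is the width of the annulus where $\nabla\chi\ne0$, so the concentration bound is needed for $\nu(|x|>R-1)$, not $\nu(|x|>R)$. The choice $u=s/(1+2\beta_\nu(s))$ is made so that $(1+2\beta_\nu(s))\,\nu(|x|>R-1)\le 6(1+2\beta_\nu(s))u=6s$. Using only $u\le s$, as in your Step 3, would leave the factor $\beta_\nu(s)$ unabsorbed. Then $7=1+6$, and rescaling $s$ concludes. Your guess at the end of Step 3 is correct, but the cross term only appears once the cut-off is introduced.
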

\begin{proof}
Let $R>0$ and $\chi$ be defined as $\chi(x)=\mathbf 1_{|x|\leq R-1}+(R-|x|)\mathbf 1_{R-1\leq |x|\leq R}$. We thus have for all smooth $f$ and all $a \in \mathbb R$,
\begin{eqnarray*}
\Var_\mu(f) &\leq& \int (f-a)^2 d\mu = \int (f-a)^2 \, \chi^2 \,  d\mu + \int (f-a)^2 \, (1-\chi^2) \, d\mu \\ &\leq& e^{- \min_{B(0,R)}U} \, \int (f-a)^2 \, \chi^2 \, \nu(dx) \, + \, \Osc^2 f \; \mu(|x|>R-1) 
\end{eqnarray*}
so that choosing $a$ such that $\int (f-a) \chi d\nu=0$ we have
\begin{eqnarray*} 
\Var_\mu(f) &\leq& 2 \, e^{-\min_{B(0,R)}U} \, \left(\beta_\nu(s) \int |\nabla f|^2 \, \chi^2 \, d\nu + \beta_\nu(s) \int (f-a)^2 \, |\nabla\chi|^2 \, d\nu\right) \, + \\ && \quad \quad \quad + \, \Osc^2 f \; \left(s \, e^{-\min_{B(0,R)}U} + \mu(|x|>R-1)\right)
\end{eqnarray*}
since $\Osc(g\chi) \leq \Osc(g)$. Finally, we get
\begin{eqnarray*} 
\Var_\mu(f) &\leq& 2 \, e^{\Osc_{B(0,R)}U} \, \beta_\nu(s) \int |\nabla f|^2 \,  d\mu \, + \, \\ &&  + \, \Osc^2 f \; \left(2\beta_\nu(s) \, e^{-\min_{B(0,R)}U} \,  \nu(|x|>R-1) + \,  s \, e^{-\min_{B(0,R)}U} + \mu(|x|>R-1)\right) \, \\ &\leq& 2 \, e^{\Osc_{B(0,R)}U} \, \beta_\nu(s) \int |\nabla f|^2 \,  d\mu \, + \, \\ &&  + \, \Osc^2 f \;  e^{-m_U} \, \left((2\beta_\nu(s)+1) \,  \nu(|x|>R-1) + \,  s \, \right) \, .
\end{eqnarray*}
According to Proposition \ref{propWPconcent}, if we choose $$R=1+med_\nu+4\sqrt{\beta_\nu(u)} \, \ln(1/u) \quad \textrm{ and } \quad u=\frac{s}{1+2\beta_\nu(s)}$$ then $$(2\beta_\nu(s)+1) \,  \nu(|x|>R-1) \leq 6 \, s \, .$$ The result follows.
\end{proof}
The previous result is not optimal and we could try to optimize it by choosing other $R$'s such that $(2\beta_\nu(s)+1) \,  \nu(|x|>R-1) \to 0$ as $s \to 0$. We analyze below two families of examples to determine whether this optimization really improves upon the result.

\begin{remark}\label{remWPboundperturb}
The same method furnishes the weak version of Holley-Stroock perturbation argument, i.e. if $U$ is bounded then $\beta_\mu(s) \leq e^{\Osc U} \, \beta_\nu(e^{m_U} s)$.
\hfill $\diamondsuit$
\end{remark}
\begin{example}\label{exampweak2}
\begin{enumerate}
\item[]
\item[(1)] \textbf{Generalized Cauchy distribution.} 

Let $\nu(dx)= z_\alpha^{-1} (1+|x|^2)^{- (\alpha+d)/2} \, dx$ for $\alpha>0$. We know that $\beta_\nu(s) \leq c_\alpha \, s^{-2/\alpha}$. We will not use the final statement in the Theorem but directly use the concentration result $$\nu(|x|>R-1) \leq \frac{z_\alpha^{-1}}{1+\alpha} \, (R-1)^{- \alpha}$$ for $R>1$, implying the natural choice  $R-1=s^{- (2+\alpha)/\alpha^2}$. If $U(x)-U(0) \leq c \, \psi(|x|)$ we thus have 
$$\beta_\mu(s) \leq c'_\alpha \, e^{c \psi(1+s^{-(2+\alpha)/\alpha^2})} \, s^{-2/\alpha} \, ,$$ which is  of course a disaster unless $\psi$ is equivalent to a logarithm. 
\item[]
\item[(2)] \textbf{Subbotin distribution.} 

Let $\nu(dx) = z_{\alpha} \, e^{-|x|^\alpha} dx$ for some $\alpha \in (0,1)$. This time the concentration behaves like $e^{-cR^\alpha}$ so that choosing $R$ of order $\ln^{1/\alpha}(1/s)$ we obtain  $$\beta_\mu(s) \leq c'_\alpha \, e^{c \psi(1+\ln^{1/\alpha}(1/s))} \, s^{-2/\alpha} \, ,$$ so that again, very naturally, the result is interesting provided the growth of $U$ is not larger than $|x|^\alpha$.
\end{enumerate}
Notice that when the Poincar\'e constant is finite, implying an exponential concentration, one may consider Lipschitz perturbations, i.e. $U$ with a linear growth. The takeaway of these examples is thus that in order to get interesting perturbation results the growth of the perturbation has to be at most the same as the concentration level. \hfill $\diamondsuit$
\end{example}

The natural question is then to find explicit sufficient conditions to obtain explicit rates, better than the one in Proposition \ref{propWPgene}. The first results in this direction are contained in the pioneering work \cite{RW}. In a non explicit form they are a particular case of the method based on (Foster) Lyapunov functions introduced in \cite{BCG} and inspired by the Meyn-Tweedy theory \cite{DMT}.  \cite{BCG} contains many intermediate results, as well as others weak inequalities. For the usual Poincar\'e inequality, \cite{BBCG} pushed forward the basic elements of the method, that were developed in \cite{CGopt,CGhit,CGWW,CGZ}  for others inequalities. As we will recall later, the Lyapunov method was also successfully used in the framework of weighted inequalities \cite{CGGR}. 

The Lyapunov method for weak Poincar\'e inequalities for convolution products, is developed in \cite{weakconvol}. The authors first rewrote the results in \cite{CGGR} in order to obtain the analogue of \cite{BBCG} for the Poincar\'e inequality. Namely, in their Lemma 2.4, they state the following.
\begin{theorem}\label{lyapweakP}
Let $\mu(dx)=e^{-V(x)} dx$ be a probability measure on $\mathbb R^d$, where $V$ is of class $C^1$. Denote by $L_V$ the operator $L_V:=\Delta - \nabla V. \nabla$.

Assume the following Lyapunov type condition: there exists positive constants $b$ and $R$, some positive function $\phi$ and some smooth ($C^2$) function $F$ with $F\geq 1$ such that 
\begin{equation}\label{eqweaklyap}
\frac{L_VF}{F} \leq - \, \phi + b \mathbf 1_{B(0,R)} \, .
\end{equation}
 Then $\mu$ satisfies a weak Poincar\'e inequality with rate $$\beta_\mu(s)= \left(1 + C(d) \, b \, R^2 \, e^{\Osc_{B(0,R)} V}\right) \, h_\phi^{-1}(s) \quad \textrm{ with } \quad h_\phi(r) = \mu(\phi\leq 1/r)$$  and $$C(d)= \, \frac{d+2}{d(d-1)} \textrm{ if $d\geq 2$} \quad \textrm{ and } C(1)= 4/\pi^2 \, .$$
\end{theorem}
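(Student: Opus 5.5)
The plan is to follow the standard Lyapunov route of \cite{BBCG}, adapted to a weak inequality by truncating $\phi$ at level $1/r$. The basic tool is the integration by parts inequality, valid for any smooth $g$ and $F\geq 1$ satisfying \eqref{eqweaklyap}:
\[
\int \frac{-L_VF}{F}\, g^2 \, d\mu \;\leq\; \int |\nabla g|^2 \, d\mu .
\]
To prove it, write $\int \frac{-L_VF}{F} g^2 d\mu = \int \nabla F\cdot\nabla(g^2/F)\, d\mu$. Expanding and completing the square gives $\int |\nabla g|^2 d\mu - \int |\nabla g - g\nabla F/F|^2 d\mu$. Combined with the Lyapunov condition, this yields
\[
\int \phi\, g^2\, d\mu \;\leq\; \int |\nabla g|^2 d\mu + b\int_{B(0,R)} g^2 \, d\mu .
\]

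Fix $f$ and $r>0$, and set $g=f-a$ for a constant $a$ chosen below. Split $\Var_\mu(f)\leq \int (f-a)^2 d\mu$ according to whether $\phi>1/r$ or $\phi\leq 1/r$.

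On $\{\phi>1/r\}$ we have $(f-a)^2\leq r\phi (f-a)^2$. The inequality above then gives the bound
\[
r\int|\nabla f|^2 d\mu + rb\int_{B(0,R)}(f-a)^2 d\mu .
\]
On $\{\phi\leq 1/r\}$, choose $a$ with $\inf f\leq a\leq \sup f$, so that $(f-a)^2\leq \Osc^2(f)$. That part is then at most $\Osc^2(f)\, \mu(\phi\leq 1/r) = \Osc^2(f)\, h_\phi(r)$.

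The remaining term is $\int_{B(0,R)}(f-a)^2 d\mu$. Take $a$ to be the mean of $f$ under $\mu$ restricted and normalized to $B(0,R)$; this $a$ automatically lies in $[\inf f,\sup f]$, so the bound on $\{\phi\leq 1/r\}$ still holds. By Holley--Stroock, the normalized restriction of $\mu$ to the ball has Poincaré constant at most $e^{\Osc_{B(0,R)}V}\, C_P(\lambda_R)$. Using $C_P(\lambda_R)\leq C(d) R^2$ (the bound quoted after Proposition \ref{propWPgene}, and $4/\pi^2$ in dimension one), we get
\[
\int_{B(0,R)} (f-a)^2 d\mu \;\leq\; C(d)R^2 e^{\Osc_{B(0,R)}V}\int_{B(0,R)}|\nabla f|^2 d\mu .
\]

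Collecting the pieces:
\[
\Var_\mu(f)\leq r\left(1+ C(d) b R^2 e^{\Osc_{B(0,R)}V}\right)\int|\nabla f|^2 d\mu + h_\phi(r)\,\Osc^2(f).
\]
Choosing $r=h_\phi^{-1}(s)$ (generalized inverse, using that $h_\phi$ is non-increasing) makes the last term at most $s\,\Osc^2(f)$, which gives the stated rate.

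The main obstacle is the integration by parts step for unbounded $F$ and general $g$. I would handle it by proving the inequality for compactly supported smooth $g$ and then approximating, with Fatou's lemma on the left-hand side. Both the constant $a$ and the oscillation term are harmless in this limit, because the argument only needs the inequality for bounded $g$.
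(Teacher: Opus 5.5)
Your proposal is correct and follows essentially the same argument as the paper. Both split $\int (f-a)^2\,d\mu$ according to $\{\phi>1/r\}$, use the integration-by-parts bound $-\int \frac{L_VF}{F}(f-a)^2\,d\mu\le \int|\nabla f|^2\,d\mu$ (which the paper quotes from \cite{BBCG} and you derive directly), and then take $a=\mu_R(f)$ and apply Holley--Stroock on the ball. You also spell out two points the paper leaves implicit: that this $a$ lies in $[\inf f,\sup f]$, so $(f-a)^2\le \Osc^2(f)$, and the cutoff-plus-Fatou justification of the integration by parts.
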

\begin{remark}\label{remlyapR}
In the definition of the Lyapunov function it is enough to assume that $(L_VF/F)(x) \leq - \phi(x)$ for some positive $\phi$ and $|x|\geq K$ large, provided $\phi$ is continuous. Indeed if the latter is true, for $|x|\leq K$, $|L_VF/F|(x)\leq M$ and $|\phi(x)| \leq  M$ for some $M\geq 0$. It follows $$(L_VF/F)(x) \leq - \phi (x)\,\mathbf 1_{|x|>K} + M \,\mathbf 1_{|x|\leq K} \leq \, - \phi (x) + 2M\, \mathbf 1_{|x|\leq K}.$$
\hfill $\diamondsuit$
\end{remark}

\begin{proof}
The proof, given for the sake of completeness, is mimiking the proof of Theorem 1.4 in \cite{BBCG}. One has for all $a \in \mathbb R$ and all $r>0$
\begin{eqnarray*}
\Var_\mu(f) &\leq& \mu((f-a)^2) = \mu(\mathbf 1_{\phi>1/r} (f-a)^2)+\mu(\mathbf 1_{\phi\leq 1/r} (f-a)^2)\\ &\leq& \mu(r \, \phi \,  (f-a)^2) + \mu(\phi \leq 1/r) \, \Osc^2(f) \\ &\leq& - r \, \int \, \frac{LF}{F} \, (f-a)^2 \, d\mu \, + rb \, \mu((f-a)^2 \, \mathbf 1_{B(0,R)}) + \mu(\phi \leq 1/r) \, \Osc^2(f) \, .
\end{eqnarray*}
Notice that $\phi$ is measurable so that $\mu(\phi \leq 1/r):=s$ is meaningful. Using that $L$ is $\mu$-symmetric, following \cite{BBCG} p.64, one has $$- \, \int \, \frac{LF}{F} \, (f-a)^2 \, d\mu\leq \mu(|\nabla f|^2) \, .$$ 
Defining $\mu_R = (1/\mu(B(0,R))) \, \mathbf 1_{B(0,R)} \mu$ and choosing $a=\mu_R(f)$ one has $$\mu((f-a)^2 \, \mathbf 1_{B(0,R)})=\Var_{\mu_R}\left(\sqrt{\mu(B(0, R))}f\right) \leq \, e^{\Osc_{B(0,R)} V} \, C_P(\lambda_R) \, \mu(|\nabla f|^2)$$ according to Holley-Stroock perturbation argument for the Poincar\'e constant. Hence the result.
\end{proof}
\begin{example}\label{exampweak3}

For the generalized \textbf{Cauchy distribution} $\mu(dx)= z_\alpha^{-1} (1+|x|^2)^{- (\alpha+d)/2} \, dx$ for $\alpha>0$, we may choose $F(x)=(1+|x|^2)^k$. It is easily seen that, provided $k<1+(\alpha/2)$ and $R$ large enough, the Lyapunov condition is satisfied with $\phi(x)=c_\alpha/(1+|x|^2)$ for some constant $c_\alpha$. $c_\alpha$ also depends on $k$ but we may choose $k=1+(\alpha/4)$ to set the ideas. We thus recover that $h_\phi(r)$ behaves like $c \,  r^{-\alpha/2}$ and $\beta_\mu(s)$ behaves like $c \, s^{- 2/\alpha}$ for small $s$'s.

For the \textbf{Subbotin distribution} $\mu(dx) = z_{\alpha} \, e^{-|x|^\alpha} dx$ for some $\alpha \in (0,1)$, we may choose $F(x)=e^{\gamma |x|^\alpha}$ which is a Lyapunov function for $\gamma <1$ with $\phi(x)= c \, |x|^{2(\alpha-1)}$ (recall that here $\alpha<1$). It follows that $h_\phi(r) \leq c \, e^{-c \, r^{\alpha/2(1-\alpha)}}$ and finally $\beta_\mu(s)$ behaves like $c \, \ln(1/s)^{2(1-\alpha)/\alpha}$ for small $s$'s.

We thus recover in both cases the results in Example \ref{exampweak} without using weighted inequalities (even if both methods are interlocked). The courageous reader will trace the constants.

General Cauchy and Subbotin cases are studied in Example 4.3 and Example 4.2, respectively, of \cite{weakconvol} in a similar way. We provide an additional comment below.
\hfill $\diamondsuit$
\end{example}

In order to use the Lyapunov function method for a perturbation $d\mu=e^{-(U+W)}dx$, we face two problems: find a good Lyapunov function and control the tails of $\{\phi\leq 1/r\}$. The only simple Lyapunov function to test is the one of $\nu=e^{-W}dx$. This leads to the following.
\begin{theorem}\label{thmperturbWPlyap}
Let $d\nu=e^{-W} dx$. Assume that $\nu$ satisfies the Lyapunov condition \eqref{eqweaklyap}. Let $U$ be smooth and such that $d\mu=e^{-U} d\nu$ is a probability measure. Assume that $$\phi_U(x)= \phi(x) + \frac{\langle \nabla U.\nabla  F \rangle}{F}(x) >0 
$$ 
for $x$ large. Then there exists a constant $C$ such that $\mu$ satisfies a weak Poincar\'e inequality with rate $$\beta_\mu(s) = C \, h_U^{-1}(s) \quad \textrm{ where } \quad h_U(r)= e^{- \min_{\phi_U(x)\leq 1/r} U(x)} \, \nu(\phi_U\leq 1/r) \, .$$
\end{theorem}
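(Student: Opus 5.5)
The plan is to apply Theorem \ref{lyapweakP} to $\mu=e^{-V}dx$ with $V=U+W$, using the same function $F$ that is a Lyapunov function for $\nu$. Since $L_V F = L_W F - \langle \nabla U,\nabla F\rangle$, the condition \eqref{eqweaklyap} for $\nu$ gives
$$\frac{L_V F}{F} \leq -\phi - \frac{\langle \nabla U,\nabla F\rangle}{F} + b\,\mathbf 1_{B(0,R)} = -\phi_U + b\,\mathbf 1_{B(0,R)} .$$
By assumption $\phi_U>0$ only for $|x|\geq K$, so I would apply Remark \ref{remlyapR}. Continuity of $\phi_U$ follows from the smoothness of $U$, $F$, and of $\phi$; if $\phi$ is only measurable, one can replace it by a continuous minorant. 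After enlarging $R$ to $R'=\max(R,K)$ and $b$ to some $b'$ that absorbs the bounded values of $|L_VF/F|$ and $|\phi_U|$ on $B(0,K)$, this yields
$$\frac{L_VF}{F}\leq -\tilde\phi_U + b'\,\mathbf 1_{B(0,R')} ,$$
where $\tilde\phi_U$ is positive everywhere and equals $\phi_U$ outside $B(0,R')$. On the ball one can take $\tilde\phi_U=\max(\phi_U,\varepsilon)$ and pay an extra $b$.

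Theorem \ref{lyapweakP} then gives
$$\beta_\mu(s)=\bigl(1+C(d)\,b'R'^2e^{\Osc_{B(0,R')}V}\bigr)\,h^{-1}(s), \qquad h(r)=\mu(\tilde\phi_U\leq 1/r).$$
The prefactor is a finite constant $C$ because $V$ is continuous on a compact set.

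It remains to compare $h$ with $h_U$:
$$\mu(\phi_U\leq 1/r)=\int_{\{\phi_U\leq 1/r\}} e^{-U}\,d\nu \leq e^{-\min_{\{\phi_U\leq 1/r\}}U}\;\nu(\phi_U\leq 1/r)=h_U(r).$$
On the region where $\tilde\phi_U\neq\phi_U$ we have $\tilde\phi_U\geq\phi_U$, so $\{\tilde\phi_U\leq 1/r\}\subset\{\phi_U\leq 1/r\}$. Hence $h\leq h_U$. Both functions are non-increasing in $r$, so their generalized inverses satisfy $h^{-1}(s)\leq h_U^{-1}(s)$, with $h^{-1}(s)=\inf\{r:h(r)\leq s\}$. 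Since a weak Poincaré inequality with rate $\beta$ remains true for any larger rate, we conclude $\beta_\mu\leq C\,h_U^{-1}$.

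The main obstacle is the technical passage through Remark \ref{remlyapR}: $\phi_U$ is only positive for large $x$, and one must make sure the modification on the ball keeps $\{\tilde\phi_U\le 1/r\}$ inside $\{\phi_U\le 1/r\}$. One should also check that $\min_{\{\phi_U\leq 1/r\}}U$ is meaningful, i.e. an infimum and possibly $-\infty$, in which case the bound is trivial. Once this is handled, the integration-by-parts step of Theorem \ref{lyapweakP}, $-\int (LF/F)(f-a)^2d\mu\leq\mu(|\nabla f|^2)$, applies verbatim to $L_V$, because $L_V$ is $\mu$-symmetric.
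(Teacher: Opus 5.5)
Your proposal is correct and follows the paper's proof: $F$ remains a Lyapunov function for $\mu$ because $L_VF=L_WF-\langle\nabla U,\nabla F\rangle$, and $\mu(\phi_U\le 1/r)=\int \mathbf 1_{\{\phi_U\le 1/r\}}e^{-U}\,d\nu\le h_U(r)$. You also spell out the Remark \ref{remlyapR} modification and the comparison of generalized inverses, which the paper leaves implicit. (Your side remark about replacing $\phi$ by a continuous minorant is unnecessary, and not always possible. The modification $\max(\phi_U,\varepsilon)$ on the ball only needs $\phi_U$ to be bounded below there, which already follows from $\phi>0$ and the continuity of $\langle\nabla U,\nabla F\rangle/F$.)
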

\begin{proof}
Under the assumption of the Theorem, $F$ is still a Lyapunov function for $\mu$. It remains to evaluate $$\mu(\phi_U\leq 1/r)= \int e^{-U} \, \mathbf 1_{\phi_U\leq 1/r} \, d\nu$$ which is less than $h_U$.
\end{proof}

We shall explain this in the case of a generalized Cauchy distribution.

Let $\nu$ be a generalized Cauchy distribution of order $\alpha>0$ as in our examples, and $d\mu=e^{-U} d\nu$. For $F(x)=(1+|x|^2)^k$ to be a Lyapunov function it suffices that $c_\alpha + 2k \langle \nabla U(x), x\rangle >0$ for $x$ large enough, for the $c_\alpha$ defined in Example \ref{exampweak3}, in this case for $x$ large, $$\phi(x)=(c_\alpha + 2k \langle \nabla U(x), x\rangle)/(1+|x|^2) \, .$$ 

If $\langle\nabla U(x), x\rangle\geq c (1+|x|^2)$, $\phi$ is larger than a constant, so we know from \cite{BBCG} that $\mu$ satisfies a Poincar\'e inequality. In the spirit of the present paper (and because it was studied elsewhere) this is not an interesting situation. It is reasonable to only look at cases where $\langle\nabla U(x), x\rangle/(1+|x|^2)$ is less than a constant and $\phi$ is positive and larger than some $m/(1+|x|^2)$.

It remains to control $$h_\phi(r)=\int \, e^{-U} \, \mathbf 1_{\phi \leq 1/r} \, d\nu \leq e^{- \min_{\phi \leq 1/r} U} \, \nu(\phi \leq 1/r) \, \leq \, c \, {e^{- \min_{|x|>\sqrt{rm-1}} U(x)}} \, r^{-\alpha/2} \, .$$ 

Let us state the result we have obtained.
\begin{proposition}\label{propperturbcauchy}
Let $\nu(dx)= z_\alpha^{-1} (1+|x|^2)^{- (\alpha+d)/2} \, dx$ for $\alpha>0$, $d\mu=e^{-U}d\nu$. Define $c_\alpha$ as in Example \ref{exampweak3}. If, for $|x| \geq R$ large, $$c_\alpha + (2+(\alpha/2)) \langle \nabla U(x), x\rangle \in [a_\alpha,b_\alpha] \quad \textrm{ for some } \; a_\alpha >0 \, ,$$ $\mu$ satisfies a weak Poincar\'e inequality with a rate function 
$$\beta_\mu(s) = C(\alpha,R) \, h_\alpha^{-1}(s) \quad \textrm{ with } \quad {h_\alpha(r)= e^{- \min_{|x|>\sqrt{ra_\alpha-1}} U(x)} \, r^{-\alpha/2} }$$ 
for small $s$'s.
\end{proposition}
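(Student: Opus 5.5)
The plan is to apply Theorem \ref{thmperturbWPlyap}, in its concrete form Theorem \ref{lyapweakP} with Remark \ref{remlyapR}, to $\mu=e^{-(U+W)}dx$, where $W(x)=\ln z_\alpha+\frac{\alpha+d}{2}\ln(1+|x|^2)$. As Lyapunov function I take the one from Example \ref{exampweak3}, $F(x)=(1+|x|^2)^k$ with $k=1+\alpha/4$.

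\textbf{Step 1: the Lyapunov condition.} With $L_{U+W}=L_W-\nabla U\cdot\nabla$ and $\nabla F/F=2kx/(1+|x|^2)$ one gets
$$\frac{L_{U+W}F}{F}=\frac{L_WF}{F}-\frac{2k\langle\nabla U,x\rangle}{1+|x|^2}.$$
From the computation in Example \ref{exampweak3}, $L_WF/F\le -c_\alpha/(1+|x|^2)$ for $|x|$ large. Hence for $|x|\ge R$ (enlarging $R$ if needed)
$$\frac{L_{U+W}F}{F}\le -\frac{c_\alpha+2k\langle\nabla U,x\rangle}{1+|x|^2}=:-\phi(x).$$
Here I match $2k$ with the coefficient in the statement; with the constant bookkeeping of the example, the factor $(2+\alpha/2)$ is exactly $2k$. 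By hypothesis $\phi(x)\ge a_\alpha/(1+|x|^2)>0$ for $|x|\ge R$. On $B(0,R)$ everything is continuous and bounded, so Remark \ref{remlyapR} gives \eqref{eqweaklyap}. This holds with a positive continuous $\phi$ equal to the above outside $B(0,R)$ and bounded below by a positive constant inside, and with some $b$ depending on $\alpha$, $R$ and bounds on $U$ over $B(0,R)$.

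\textbf{Step 2: the tail term.} Theorem \ref{lyapweakP} gives $\beta_\mu(s)=C\,h_\phi^{-1}(s)$ with $C=1+C(d)bR^2e^{\Osc_{B(0,R)}(U+W)}$, a constant $C(\alpha,R)$. For $r$ large, the set $\{\phi\le 1/r\}$ lies outside $B(0,R)$, since $\phi$ is bounded below there. On $\{\phi\le 1/r\}$ we have $a_\alpha/(1+|x|^2)\le 1/r$, that is $|x|\ge\sqrt{ra_\alpha-1}$. Thus
$$h_\phi(r)\le e^{-\min_{|x|\ge\sqrt{ra_\alpha-1}}U}\,\nu(|x|\ge\sqrt{ra_\alpha-1})\le c\,e^{-\min_{|x|\ge\sqrt{ra_\alpha-1}}U}\,r^{-\alpha/2},$$
using the polynomial tail $\nu(|x|>\rho)\le c\rho^{-\alpha}$ from Example \ref{exampweak2}. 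The constant $c$ can be absorbed into $C(\alpha,R)$.

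\textbf{Step 3: monotonicity.} Since $h\le\tilde h$ implies $h^{-1}\le\tilde h^{-1}$ for nonincreasing functions (generalized inverses), we get $\beta_\mu\le C h_\alpha^{-1}$ for small $s$. I expect the main obstacle to be this point. $h_\alpha$ need not be monotone, because $\min U$ over the exterior region is nondecreasing in $r$. So the generalized inverse $h_\alpha^{-1}(s)=\inf\{r: h_\alpha(r)\le s\}$ has to be used carefully, noting that $h_\phi(r)\le s$ whenever $h_\alpha(r)\le s/c$. The upper bound $b_\alpha$ only serves to keep the regime non-Poincaré and plays no role in the argument.
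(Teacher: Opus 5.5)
Your proposal is correct and follows the paper's own route: keep $F=(1+|x|^2)^k$ with $2k=2+\alpha/2$ as a Lyapunov function for $\mu$ (Theorem \ref{thmperturbWPlyap} together with Remark \ref{remlyapR}), so that $\phi\ge a_\alpha/(1+|x|^2)$ outside a ball, and then bound $\mu(\phi\le 1/r)\le e^{-\min_{|x|>\sqrt{ra_\alpha-1}}U}\,\nu(|x|>\sqrt{ra_\alpha-1})\le c\,e^{-\min_{|x|>\sqrt{ra_\alpha-1}}U}\,r^{-\alpha/2}$. One correction: the worry in your Step 3 is unfounded. Since $m(r):=\min_{|x|>\sqrt{ra_\alpha-1}}U$ is nondecreasing in $r$, $h_\alpha=e^{-m(r)}r^{-\alpha/2}$ is a product of two nonincreasing factors and is therefore nonincreasing. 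The same monotonicity gives $c\,h_\alpha(r)\le h_\alpha(c^{-2/\alpha}r)$ for $c\ge 1$, hence $h_\phi^{-1}(s)\le c^{2/\alpha}h_\alpha^{-1}(s)$, and this is what justifies absorbing $c$ into $C(\alpha,R)$.
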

Notice that if $U \to + \infty$ at infinity, the previous result shows that the rate function improves upon the one of the non perturbed measure. This is quite natural since the tails of the distribution of $\mu$ are lighter so that a similar improvement is contained in the rough Proposition \ref{propWPgene}. In particular if $e^{-U}$ behaves like $(1+|x|^2)^{-\alpha'/2}$, the rate $\beta_\mu$ obtained in the previous theorem is of order $r^{- 2/(\alpha+\alpha')}$ in accordance with the rate of the generalized Cauchy distribution of order $\alpha+\alpha'$.

We include a similar result for Subbotin distributions
\begin{proposition}\label{propperturbsubbotin}
Let $\nu(dx)= z_\alpha^{-1} e^{-\vert x\vert^\alpha} \, dx$ for $\alpha\in(0,1)$, $F(x)=e^{\gamma |x|^\alpha}$ with  $\gamma <1$ be a Lyapunov function for $\nu$, and $d\mu=e^{-U}d\nu$. Define $c_\alpha$ as in Example \ref{exampweak3}. If, for $|x| \geq R$ large, $$c_\alpha + \alpha \gamma\vert x \vert^{-\alpha} \langle \nabla U(x), x\rangle \in [a_\alpha,b_\alpha] \quad \textrm{ for some } \; a_\alpha >0 \, ,$$ $\mu$ satisfies a weak Poincar\'e inequality with a rate function $$\beta_\mu(s) = C(\alpha,R) \, h_\alpha^{-1}(s) \quad \textrm{ with } \quad h_\alpha(r)= e^{- \min_{|x|>{(ra_\alpha)^{2(1-\alpha)}}} U(x)} \, e^{-c(a_\alpha r)^{\frac{\alpha}{2(1-\alpha)}}} \, ,$$ for small $s$'s.
\end{proposition}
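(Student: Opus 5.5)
We apply Theorem \ref{thmperturbWPlyap} with the Lyapunov function $F(x)=e^{\gamma|x|^\alpha}$ of $\nu$ from Example \ref{exampweak3}, in the same way as for Proposition \ref{propperturbcauchy}.

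\textbf{Step 1: the Lyapunov condition for $\mu$.} Write $L_{W+U}F=L_WF-\langle\nabla U,\nabla F\rangle$. Since $\nabla F(x)=\alpha\gamma|x|^{\alpha-2}x\,F(x)$, we get
$$\frac{\langle\nabla U,\nabla F\rangle}{F}(x)=\alpha\gamma|x|^{\alpha-2}\langle\nabla U(x),x\rangle .$$
For $|x|$ large, Example \ref{exampweak3} gives $L_WF/F\le -c_\alpha|x|^{2(\alpha-1)}$. Hence
$$\frac{L_{W+U}F}{F}(x)\le -|x|^{2(\alpha-1)}\Big(c_\alpha+\alpha\gamma|x|^{-\alpha}\langle\nabla U(x),x\rangle\Big)=:-\phi_U(x).$$
The exponents match because $|x|^{\alpha-2}=|x|^{2(\alpha-1)}|x|^{-\alpha}$. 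By hypothesis the bracket lies in $[a_\alpha,b_\alpha]$ for $|x|\ge R$. So $\phi_U\ge a_\alpha|x|^{2(\alpha-1)}>0$ there, and $\phi_U$ is continuous. Remark \ref{remlyapR} then turns this into a full Lyapunov condition \eqref{eqweaklyap} for $\mu$, with a constant $b$ and a ball depending on $R$ and $\alpha$. Theorem \ref{lyapweakP} (equivalently Theorem \ref{thmperturbWPlyap}) yields $\beta_\mu(s)=C(\alpha,R)\,h^{-1}(s)$ with $h(r)=\mu(\phi_U\le 1/r)$.

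\textbf{Step 2: the level set $\{\phi_U\le 1/r\}$.} For large $r$, this set lies in $\{|x|\ge R\}$, where $\phi_U$ is bounded below. There the lower bound gives $a_\alpha|x|^{2(\alpha-1)}\le 1/r$, that is $|x|\ge (ra_\alpha)^{1/(2(1-\alpha))}$. The statement writes this radius as $(ra_\alpha)^{2(1-\alpha)}$; I would follow the exponent actually produced by the computation, and only the tail exponent below matters for the final rate. Therefore
$$\mu(\phi_U\le 1/r)\le e^{-\min_{|x|>\rho(r)}U}\;\nu(|x|>\rho(r)),\qquad \rho(r)=(ra_\alpha)^{1/(2(1-\alpha))}.$$

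\textbf{Step 3: the Subbotin tail.} Integrate in polar coordinates: $\nu(|x|>\rho)\le C\rho^{d-\alpha}e^{-\rho^\alpha}$. Any $c<1$ absorbs the polynomial prefactor, so $\nu(|x|>\rho)\le C'e^{-c\rho^\alpha}$. Substituting $\rho^\alpha=(a_\alpha r)^{\alpha/(2(1-\alpha))}$ gives exactly the second factor of $h_\alpha$. Since $h\le h_\alpha$, we get $h^{-1}\le h_\alpha^{-1}$ for small $s$, because both functions are non-increasing. This gives the claimed rate, up to constants absorbed into $C(\alpha,R)$ and $c$.

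\textbf{Main obstacle.} The delicate point is Step 1: checking that the leading terms of $L_WF/F$ combine so that the perturbation enters precisely as $\alpha\gamma|x|^{-\alpha}\langle\nabla U,x\rangle$ multiplying $|x|^{2(\alpha-1)}$. The lower-order terms of $L_WF/F$ must be absorbed into $c_\alpha$ for $|x|$ large. Everything else is a direct application of the earlier theorems.
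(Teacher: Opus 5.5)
Your proposal is correct and follows the paper's argument: $F$ stays a Lyapunov function for $\mu$ with $\phi_U\ge a_\alpha|x|^{2(\alpha-1)}$ outside a ball, via Theorem \ref{thmperturbWPlyap} and Remark \ref{remlyapR}, after which you bound the level set and the Subbotin tail (inside the ball you should replace $\phi_U$ by a positive continuous minorant such as $a_\alpha\max(|x|,R)^{2(\alpha-1)}$, a detail the paper also skips). You are right that the computation gives the radius $(ra_\alpha)^{1/(2(1-\alpha))}$, which matches the exponent $\alpha/(2(1-\alpha))$ in the second factor, so the printed $(ra_\alpha)^{2(1-\alpha)}$ is a typo; however, this radius also enters the factor $e^{-\min U}$, so it is not only the tail exponent that matters, and what you actually prove is the corrected statement.
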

\begin{proof}
    The proof proceeds analogously to that of the previous proposition, using the fact that $\phi>a_\alpha \vert x\vert^{^{2(\alpha -1)}}$.
\end{proof}

\begin{remark}\label{remadcom} \textbf{Additional Comment.} \quad In \cite{weakconvol} the authors also discuss the rate for convolution product. Recall that according to Proposition \ref{proppropweakP} (4), for such a convolution product $\mu*\nu$ the rate $\beta$ is (up to a constant) not worse than the worst of $\beta_\mu$ and $\beta_\nu$. In particular if $\nu$ is compactly supported on $B(0,R)$ and equivalent to Lebesgue (or uniform) measure on the ball, it satisfies some Poincar\'e inequality, so that $\beta$ is similar to $\beta_\mu$.

This result is extended in \cite{weakconvol} to the case where $\nu$, still compactly supported, is not necessarily equivalent to Lebesgue (nor absolutely continuous). The proof relies on the construction of an ad-hoc Lyapunov function and a tail control for the convolution product (similar to what we did for the pertubation in the proof of Proposition \ref{propperturbcauchy}). The latter imposes some uniform controls w.r.t. translation by $x$ (they are related to the family of probability measures $(e^{- U(x-y)}/z_x) dy$  explained in section 3 of \cite{weakconvol} to which we refer the reader). A similar problem, involving ordinary Poincar\'e inequalities, appears in our study \cite{CCG1} of the annealed Langevin dynamics.
\hfill $\diamondsuit$
\end{remark}

To (temporarily) finish with weak Poincar\'e inequalities, recall that we may replace the Oscillation by $\mathbb L^p$ norms for $p>2$,
\begin{lemma}\label{lemWPp}
Assume that $\mu$ satisfies a weak Poincar\'e inequality with rate $\beta_\mu$. Then for all $p>2$, $\mu$ satisfies a $p$-weak Poincar\'e inequality $$\Var_\mu(f) \leq \beta_\mu\left(\frac{s^{p/(p-2)}}{2^{({3}p-2)/(p-2)}}\right) \, \int |\nabla f|^2 d\mu + s \, ||f - \mu(f)||^2_{\mathbb L^p(\mu)} \, .$$
\end{lemma}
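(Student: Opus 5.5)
Without loss of generality I assume $\mu(f)=0$; both sides of the target inequality are invariant under adding constants. The plan is a truncation argument. For a level $K>0$ to be chosen later, split $f=f_K+g_K$ with $f_K=(f\wedge K)\vee(-K)$ and $g_K=f-f_K$. Then $|\nabla f_K|\le|\nabla f|$, $\Osc(f_K)\le 2K$, and $g_K$ is supported on $\{|f|>K\}$ with $|g_K|\le |f|\mathbf 1_{|f|>K}$.

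\textbf{Step 1 (splitting the variance).} Since $\Var_\mu(f)^{1/2}\le \Var_\mu(f_K)^{1/2}+\Var_\mu(g_K)^{1/2}$, I use the crude bound
\[\Var_\mu(f)\le 2\Var_\mu(f_K)+2\mu(g_K^2).\]
For the first term I apply the weak Poincar\'e inequality \eqref{eqWPdef} at a parameter $\sigma$, which gives
\[\Var_\mu(f_K)\le \beta_\mu(\sigma)\,\mu(|\nabla f|^2)+4\sigma K^2.\]
The factor $2$ in front of the energy term must be avoided, since the statement has coefficient $1$. I will come back to this below.

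\textbf{Step 2 (tail term).} By H\"older and Markov,
\[\mu(g_K^2)\le \mu(f^2\mathbf 1_{|f|>K})\le \|f\|_p^2\,\mu(|f|>K)^{1-2/p}\le \|f\|_p^{p}K^{2-p}.\]
I set $K=\lambda\|f\|_p$. The defect terms then become $\|f\|_p^2\big(8\sigma\lambda^2+2\lambda^{2-p}\big)$. I choose $\lambda$ so that $2\lambda^{2-p}=s/2$, i.e. $\lambda=(4/s)^{1/(p-2)}$, and $\sigma$ so that $8\sigma\lambda^2=s/2$, i.e. $\sigma=\frac{s}{16}(s/4)^{2/(p-2)}=s^{p/(p-2)}\,2^{-4-4/(p-2)}$. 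This is a power of $2$ times $s^{p/(p-2)}$, as in the statement, though the exponent is not yet exactly $(3p-2)/(p-2)$.

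\textbf{Step 3 (removing the factor 2 and matching constants).} To avoid doubling the energy, I would use orthogonality instead of the triangle inequality. Write
\[\Var_\mu(f)=\Var_\mu(f_K)+2\,\mathrm{Cov}(f_K,g_K)+\Var_\mu(g_K).\]
Note that $f_K g_K=K|g_K|\ge 0$ pointwise. I also need $\mu(f_K)\mu(g_K)\ge 0$ to control the cross term; this holds because $\mu(f_K)=-\mu(g_K)$, so I would rather bound $2\,\mathrm{Cov}+\Var(g_K)\le 2\mu(f_Kg_K)+\mu(g_K^2)\le 2K\mu(|f|\mathbf 1_{|f|>K})+\mu(f^2\mathbf 1_{|f|>K})\le 3\mu(f^2\mathbf 1_{|f|>K})$. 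This leaves $\Var_\mu(f)\le\beta_\mu(\sigma)\mu(|\nabla f|^2)+4\sigma K^2+3\|f\|_p^pK^{2-p}$, with energy coefficient exactly $1$. Balancing the two defect terms as in Step 2 then gives $\sigma$ equal to $s^{p/(p-2)}$ times an explicit power of $2$ (up to the harmless factor $3$, absorbed by adjusting $\lambda$). Since $\beta_\mu$ is non-increasing, any smaller $\sigma$ is also admissible, so the claimed constant $2^{-(3p-2)/(p-2)}$ is obtained by a slightly less economical split of $s$, for instance taking $\sigma$ with $4\sigma\lambda^2\le s/2$ and $3\lambda^{2-p}\le s/2$ using $3\le 4$.

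The main obstacle is this bookkeeping: keeping the energy coefficient equal to $1$, which is what forces the covariance argument rather than the triangle inequality, and tracking the powers of $2$ so that the parameter is at least $2^{-(3p-2)/(p-2)}s^{p/(p-2)}$. I also need $\sigma\le 1/4$, which is automatic for $s\le 1$; for larger $s$ the inequality is trivial.
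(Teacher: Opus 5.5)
Your route is the paper's: truncate at a level $K\propto\|f\|_p$, apply the weak Poincar\'e inequality to $f_K$ using $\Osc(f_K)\le 2K$, bound the tail by $\|f\|_p^pK^{2-p}$, and balance. However, Step 3 has a sign error. From $\mu(f)=0$ you get $\mu(f_K)=-\mu(g_K)$, so $\mu(f_K)\mu(g_K)=-\mu(g_K)^2\le 0$, not $\ge 0$. Therefore $\mathrm{Cov}(f_K,g_K)=\mu(f_Kg_K)+\mu(g_K)^2$, and
\[2\,\mathrm{Cov}(f_K,g_K)+\Var_\mu(g_K)=2\mu(f_Kg_K)+\mu(g_K^2)+\mu(g_K)^2 .\]
Your bound $\le 2\mu(f_Kg_K)+\mu(g_K^2)$ is false as soon as $\mu(g_K)\neq 0$, for instance when the tails are asymmetric. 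The missing term $\mu(g_K)^2=\mu(f_K)^2$ is exactly the squared mean of the truncation that the paper keeps and controls by Cauchy--Schwarz. With your crude estimates the tail coefficient becomes $4$, not $3$.

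This also breaks the constant matching, whose logic runs backwards. Since $\beta_\mu$ is non-increasing, you must produce a parameter $\sigma\ge\sigma_*:=2^{-(3p-2)/(p-2)}s^{p/(p-2)}$. A less economical split of $s$ makes $\sigma$ smaller, which gives a weaker inequality than the stated one. With tail coefficient $c$, the best split at $p=4$ gives $\sigma=s^2/(16c)$, while $\sigma_*=s^2/32$. So within this scheme $c\le 2$ is necessary, and with $c=3$ or $c=4$ no choice of $\lambda$ works. For example, your $s/2$--$s/2$ split with $c=4$ yields $(s/8)^{p/(p-2)}<\sigma_*$.

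The repair is to drop the crude bounds:
\begin{itemize}
\item Write $\mu(f^2)=\Var_\mu(f_K)+\mu(g_K)^2+\mu(f^2-f_K^2)$.
\item Use $f^2-f_K^2=(f^2-K^2)\mathbf 1_{|f|>K}\le f^2\mathbf 1_{|f|>K}$ and $\mu(g_K)^2\le\mu(g_K^2)\le\mu(f^2\mathbf 1_{|f|>K})$. This gives $\Var_\mu(f)\le\beta_\mu(\sigma)\mu(|\nabla f|^2)+4\sigma K^2+2\|f\|_p^pK^{2-p}$.
\item Take the equal split $4\sigma\lambda^2=2\lambda^{2-p}=s/2$. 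Then $\lambda^{p-2}=4/s$ and $\sigma=\frac s8\left(\frac s4\right)^{2/(p-2)}=\sigma_*$ exactly. This is the paper's choice $K=\|f\|_p/(2\sigma)^{1/p}$.
\end{itemize}
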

The proof is provided in the Appendix \ref{appendix_section_2}.
Due to the non optimality of the proof, for $p=+\infty$ we recover the weak Poincar\'e inequality up to a factor $4$.

\section{Weighted Poincar\'e inequalities.}\label{secweiP} 

\quad The second family of weakened Poincar\'e inequalities is the family of \emph{weighted Poincar\'e inequalities}. The terminology can be misleading. Indeed in the P.D.E. (and Analysis) community, weighted is used to characterize Poincar\'e  (or Poincar\'e-Wirtinger) inequality for measures that are not the Lebesgue measure (hence weighted measures in the absolutely continuous case). The same terminology is (unfortunately) used in \cite{BCG}. Here we shall say that $\mu$ satisfies a weighted Poincar\'e inequality with weight $\omega$, $\omega$ being a non-negative function defined on $\mathbb R^d$, if there exists a constant $C_{P,\omega}(\mu)$ such that for all nice function $f$
\begin{equation}\label{eqweP}
\Var_\mu(f) \, \leq \, C_{P,\omega}(\mu) \, \int \, |\nabla f|^2 \, \omega^2 \, d\mu \, .
\end{equation}
One can also consider the converse weighted Poincar\'e inequality 
\begin{equation}\label{eqconvweP}
\inf_a \, \int \, (g-a)^2 \; \frac{1}{\omega^2} \, d\mu \leq \, C_{cP,\omega}(\mu) \, \int \, |\nabla g|^2 \, d\mu \, .
\end{equation}

The Gaussian measure satisfies such a weighted inequality with $\omega(x)=(1+|x|^2)^{-1/2}$ and even more sophisticated weights (see \cite{Goz}). Here we are interested in weights satisfying $\omega(x) \geq 1$ for all $x$, corresponding to heavy tailed distributions when $\omega$ is not bounded from above. These inequalities have been described in \cite{BLweight,BLweight2} to study generalized Cauchy distributions and more generally, $\kappa$-concave distributions. A more general systematic study is made in \cite{CGGR}. 

From the point of view of generative models, weighted Poincar\'e inequalities can be used for Langevin dynamics, see e.g. \cite{cui2025optimalriemannianmetricpoincare, lelievre2025optimizingdiffusioncoefficientoverdamped}, even in the more general framework of non constant diffusion coefficient. 

Similar to the weak Poincar\'e inequality, the weighted Poincar\'e one satisfies the following set of properties.
\begin{proposition}\label{propweightedPIproperties}
    It holds
    \begin{enumerate}
        \item[(1)] for all $x\in\mathbb{R}^d, C_{P,\omega}(x+Z) = C_{P,\tilde\omega}(Z)$, where $\tilde \omega(z) = \omega(z-x)$.
        \item[(2)] for any $\lambda\in\mathbb{R}, C_{P,\omega}(\lambda Z) = \lambda^2C_{P,\tilde\omega}(Z)$, where $\tilde \omega(z) = \omega(z/\lambda)$.
        \item [(3)] for any map $T:\mathbb{R}^d\to\mathbb{R}^d$ which is $L$-Lipschitz and invertible, $C_{P,\omega}(T(Z)) = L^2C_{P,\tilde\omega}(Z)$, where $\tilde \omega(z) = \omega(T^{-1}(z))$.
        \item[(4)] if $Z_1, \dots, Z_n$ are independent and $P_i$ is the projection on the $i$-th coordinate, then $\Var\left(f\right)\leq \max_iC_{P,\omega_i}(Z_i)\mathbb{E}\left[\sum_i\left\vert \nabla_{Z_i} f\left( Z_1,\dots, Z_n\right)\right\vert^2\omega_i^2\circ P_i(Z_1, \dots, Z_n)\right]$.
        \item[(5)] if $Z_1, \dots, Z_n$ are independent, $\Var\left(f\left(\sum Z_i\right)\right)\leq\mathbb{E}\left[\left\vert \nabla f\left(\sum Z_i\right)\right\vert^2\sum_{i} C_{P, \omega_i}(Z_i)\omega_i^2(Z_i)\right]$.
    \end{enumerate}
\end{proposition}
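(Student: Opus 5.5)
Each of the five items follows from a change of variables or from the standard tensorization arguments for variances, so I would check them one at a time.

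For (1)–(3), the common mechanism is composition with a map. Let $Y=T(Z)$ with $T$ invertible and $L$-Lipschitz; translations and dilations are the special cases $T(z)=z+x$ and $T(z)=\lambda z$. For a test function $f$ set $g=f\circ T$. Then
\[
\Var(f(Y))=\Var(g(Z)) \leq C_{P,\tilde\omega}(Z)\,\mathbb E\big[|\nabla g(Z)|^2\,\tilde\omega^2(Z)\big],
\]
where $\tilde\omega=\omega\circ T$ is chosen so that $\tilde\omega(Z)=\omega(Y)$. Since $\nabla g(z)=DT(z)^{*}\nabla f(T(z))$ and $\|DT\|\leq L$, we get $|\nabla g|^2\leq L^2\,|\nabla f\circ T|^2$. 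This gives
\[
C_{P,\omega}(T(Z))\leq L^2\,C_{P,\tilde\omega}(Z).
\]

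Two points in this step need care.
\begin{enumerate}
\item The weight convention in the statement is inconsistent: it writes $\omega(z-x)$ and $\omega(z/\lambda)$, whereas the computation needs $\omega(z+x)$ and $\omega(\lambda z)$. I would make this explicit and fix the convention so that $\tilde\omega(Z)=\omega(Y)$.
\item Equality requires the reverse inequality. For translations this is immediate, by applying the same argument to $T^{-1}$. For dilations $|\nabla g|=|\lambda|\,|\nabla f\circ T|$ exactly, so equality also holds. In general (3) only yields the inequality, with equality when $T$ is a similarity (e.g.\ $L$ times an isometry).
\end{enumerate}

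For (4), I would use the subadditivity (tensorization) of the variance for product measures:
\[
\Var(f)\leq \sum_i \mathbb E\big[\Var_i(f)\big],
\]
where $\Var_i$ is the variance in $Z_i$ with the other coordinates frozen. This is proved by the usual martingale (Efron–Stein) decomposition. Applying the weighted inequality for $Z_i$ to each conditional variance gives
\[
\Var_i(f)\leq C_{P,\omega_i}(Z_i)\,\mathbb E_i\big[|\nabla_{Z_i}f|^2\,\omega_i^2(Z_i)\big].
\]
Bounding each constant by the maximum and integrating over the remaining coordinates yields (4).

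For (5), write $S=\sum_i Z_i$ and apply (4) to $F(z_1,\dots,z_n)=f(\sum_i z_i)$, for which $\nabla_{z_i}F=\nabla f(S)$. If we keep each constant $C_{P,\omega_i}(Z_i)$ inside its own summand instead of taking the maximum (the tensorization bound allows this), we get
\[
\Var(f(S))\leq \mathbb E\Big[|\nabla f(S)|^2\sum_i C_{P,\omega_i}(Z_i)\,\omega_i^2(Z_i)\Big],
\]
which is exactly (5).

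The only delicate points are the weight convention and the direction of equality in (1)–(3); (4) and (5) are routine once tensorization is invoked.
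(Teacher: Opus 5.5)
Your proof is correct and follows essentially the same route as the paper: (1)--(3) by composing with the map (the paper simply calls these immediate), and (4)--(5) by the subadditivity $\Var(f)\le\mathbb E\sum_k \Var(f\mid Z_{-k})$ applied coordinatewise, with each $C_{P,\omega_i}(Z_i)$ kept inside its own summand for (5). Your two caveats are also right. With $\tilde\omega(Z)=\omega(T(Z))$, the transported weights should be $\omega(z+x)$, $\omega(\lambda z)$ and $\omega\circ T$; the stated formulas correspond to swapping the roles of $\omega$ and $\tilde\omega$. And for a general $L$-Lipschitz $T$ only the inequality $C_{P,\omega}(T(Z))\le L^2 C_{P,\tilde\omega}(Z)$ holds, not equality.
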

The proof is provided in the Appendix \ref{appendix_section_3}.

One of the most interesting features of weighted inequalities is that they are suitable to the use of Lyapunov functions. The following result is the analogue of Theorem \ref{lyapweakP} in this framework.
\begin{theorem}\label{thmlyapweight}
Let $\mu(dx)=e^{-V(x)} dx$, for some smooth $V$, be a probability measure, $L_V=\Delta - \nabla V.\nabla$. Let $\phi$ be a $C^1$ positive increasing function defined on $\mathbb R^+$. Assume that there exists a $\phi$-Lyapunov function $F$, i.e. a $C^2$ function $F$ such that $F\geq 1$ and $$L_V F \leq - \, \phi(F) + b \, \mathbf 1_{|x|\leq R}$$ for some $R>0$ and $b \geq 0$.

Then $\mu$ satisfies both a weighted Poincar\'e inequality 
\begin{equation}\label{eqlyapwe1}
\Var_\mu(g) \leq \max \left(1 \, , \, \frac{bC_P(\mu_R)}{\phi(1)}\right) \, \int \, |\nabla g|^2 \, \left(1+ \frac{1}{\phi'(F)}\right) \, d\mu
\end{equation}
where $C_P(\mu_R)$ is the Poincar\'e constant of the normalized restriction of $\mu$ to the ball $B(0,R)$ and a converse weighted Poincar\'e inequality 
\begin{equation}\label{eqlyapwe2}
\inf_a \, \int \, (g-a)^2 \; \frac{\phi(F)}{F} \, d\mu \leq (1+bC_P(\mu_R)) \, \int \, |\nabla g|^2 \, d\mu \, .
\end{equation}
Recall that in all cases and $d\geq 2$,  $$C_P(\mu_R) \leq \frac{d+2}{d(d-1)} \, R^2 \, e^{\Osc_{B(0,R)} V}$$  while the pre-factor has to be replaced by $4/\pi^2$ for $d=1$.
\end{theorem}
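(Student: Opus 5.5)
The plan is to prove both inequalities by the same device: use $-L_VF$ as a density that dominates the weight, integrate by parts against $\mu$, and absorb the cross term by Cauchy--Schwarz. The defect $b\mathbf 1_{B(0,R)}$ is then handled by the local Poincaré inequality for $\mu_R$. Throughout, $a$ will be chosen as $a=\mu_R(g)$. This gives
$$\int_{B(0,R)}(g-a)^2\,d\mu=\mu(B(0,R))\Var_{\mu_R}(g)\le C_P(\mu_R)\int_{B(0,R)}|\nabla g|^2\,d\mu\le C_P(\mu_R)\int|\nabla g|^2d\mu .$$
The final bound on $C_P(\mu_R)$ is the Holley--Stroock perturbation of the Poincaré constant of the uniform measure on the ball, exactly as in the proof of Theorem \ref{lyapweakP}.

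\textbf{Converse inequality \eqref{eqlyapwe2}.} Since $F\ge 1$, the Lyapunov condition gives
$$\frac{\phi(F)}{F}\le -\frac{L_VF}{F}+b\,\mathbf 1_{B(0,R)} .$$
We multiply by $(g-a)^2$ and integrate. Then we use the inequality already quoted from \cite{BBCG} in the proof of Theorem \ref{lyapweakP}, namely $-\int \frac{L_VF}{F}(g-a)^2d\mu\le\int|\nabla g|^2d\mu$. This follows from $\mu$-symmetry and the expansion
$$\int\nabla\!\Big(\tfrac{(g-a)^2}{F}\Big)\cdot\nabla F\,d\mu=\int |\nabla g|^2 d\mu-\int\Big|\nabla g-\tfrac{(g-a)}{F}\nabla F\Big|^2d\mu .$$
The defect term is bounded by $b\,C_P(\mu_R)\int|\nabla g|^2d\mu$ as above. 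Adding the two contributions yields \eqref{eqlyapwe2}.

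\textbf{Weighted inequality \eqref{eqlyapwe1}.} Here I would divide by $\phi(F)$ instead of by $F$. Since $\phi$ is increasing and $F\ge1$, we have $\phi(F)\ge\phi(1)$, so
$$1\le \frac{-L_VF}{\phi(F)}+\frac{b}{\phi(1)}\mathbf 1_{B(0,R)} .$$
Hence $\Var_\mu(g)\le\int(g-a)^2d\mu$ is at most $\int (g-a)^2\frac{-L_VF}{\phi(F)}d\mu+\frac{b}{\phi(1)}\int_{B(0,R)}(g-a)^2d\mu$. Integrating by parts, the first term equals
$$\int \frac{2(g-a)\nabla g\cdot\nabla F}{\phi(F)}\,d\mu-\int (g-a)^2\frac{\phi'(F)|\nabla F|^2}{\phi(F)^2}\,d\mu .$$
Applying $2xy\le x^2\phi'(F)+y^2/\phi'(F)$ with $x=(g-a)|\nabla F|/\phi(F)$ and $y=|\nabla g|$ cancels the negative term. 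What remains is bounded by $\int|\nabla g|^2/\phi'(F)\,d\mu$. The second term is at most $\frac{bC_P(\mu_R)}{\phi(1)}\int|\nabla g|^2d\mu$. Bounding the sum by $\max(1,bC_P(\mu_R)/\phi(1))\int|\nabla g|^2(1+1/\phi'(F))d\mu$ gives \eqref{eqlyapwe1}.

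\textbf{Main obstacle.} The algebra is straightforward; the delicate point is justifying the integrations by parts, since $F$ is unbounded and $g$ is not compactly supported. I would first prove both inequalities for $g$ smooth with compact support, where no boundary terms arise. I would then extend by a standard truncation/approximation argument using monotone convergence on the weighted left-hand sides. I would also note that if $\phi'(F)$ vanishes somewhere, the right-hand side of \eqref{eqlyapwe1} is infinite, so nothing needs to be proved there.
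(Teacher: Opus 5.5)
Your proof is correct and follows essentially the same argument as the results the paper cites for this statement (Theorems 2.8 and 2.18 of \cite{CGGR}; the paper gives no proof of its own): integration by parts against $-L_VF/\phi(F)$, resp.\ $-L_VF/F$, exactly as the paper borrows from \cite{BBCG} in the proof of Theorem \ref{lyapweakP}, with $a=\mu_R(g)$ and Holley--Stroock for $C_P(\mu_R)$. The only imprecision is in your regularization step: compact support of $g$ does not remove boundary terms, because $(g-a)^2$ is not compactly supported when $a\neq 0$, so you should prove the intermediate inequalities for compactly supported $h$ and then apply them to $h=(g-a)\chi_n$ with cutoffs $\chi_n\uparrow 1$, using monotone convergence on the left and the vanishing of $\int (g-a)^2|\nabla\chi_n|^2\,(1+1/\phi'(F))\,d\mu$ on the right (a mild growth condition on the weight that the paper also leaves implicit).
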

These two results are exactly Theorem 2.8 and Theorem 2.18 in \cite{CGGR}. Corollary 2.14 in \cite{CGGR} contains an alternate version of \eqref{eqlyapwe1} which is not directly comparable, but is similar on most examples,
\begin{equation}\label{eqlyapwe3}
\Var_\mu(g) \leq 8 \, {\max}^2 \left(1 \, , \, \frac{bC_P(\mu_R)}{\phi(1)}\right) \, \int \, |\nabla g|^2 \, \left(1+ \frac{|\nabla F|^2}{\phi^2(F)}\right) \, d\mu \, .
\end{equation}

We now analyze how the weighted Poincar\'e behaves under perturbations.
\begin{proposition}\label{propWeightPperturb}
 Let $\mu(dx)= e^{-V(x)}dx=e^{-(U+W)(x)} dx$ for smooth $U$ and $W$ be a probability measure. Denote $\nu(dx)=e^{-W(x)}dx$ supposed to be a probability measure too that satisfies a weighted Poincar\'e inequality with weight $\omega$. 
\begin{enumerate}
     \item (Bounded perturbation) Assume that $U(x) \geq m_U$ (in other words $e^{-U}$ is bounded, notice that $m_U\leq 0$).

Then $$C_{P, \tilde\omega}(\mu)\leq e^{-m_U} C_{P, \omega}(\nu)\quad\textrm{with} \quad\tilde\omega=\omega e^{U/2} \, .$$
Furthermore, if $U$ is bounded, then $\mu$ satisfies a weighted Poincar\'e inequality with weight $\omega$ and constant $$C_{P, \omega}(\mu)\leq e^{\Osc(U)} C_{P, \omega}(\nu)\, .$$
\item (Unbounded perturbation)
\begin{enumerate}
    \item (Weighted Lipschitz case) Suppose that there exists $\varepsilon>0$ such that
    $$s=\sup_x C_{P, \omega}(\nu)\frac{1+\varepsilon}{4}|\nabla U|^2 \, \omega^2<1$$
    then $\mu$ satisfies a weighted Poincar\'e inequality with weight $\omega$ and constant
    $$C_{P,\omega}(\mu)\le \frac{(1+\varepsilon^{-1})C_{P,\omega}(\nu)}{1-s}$$
    \item (Weighted generator case) Let us introduce
    $$L_{U+W}^{\omega}f = \omega^2 \Delta f +(\nabla \omega^2-\omega^2 \nabla (U+ W)) .\nabla f .$$
    Suppose that
    $$s=\sup_x \frac{1}{2}C_{P,\omega}(\nu)\left(\frac12|\nabla U|^2\omega^2+L_{U+W}^{\omega}U\right)_+<1$$
    then $\mu$ satisfies a weighted Poincar\'e inequality with weight $\omega$ and constant
    $$C_{P,\omega}(\mu)\le \frac{C_{P,\omega}(\nu)}{1-s}$$
    \item (Weighted Lyapunov condition) Let us suppose that there exists $F\ge 1$, some $R>0$ and constants $b,\theta>0$ such that
    \begin{equation}\label{lyapw}
    L_W^\omega F\le -\theta F+b1_{|x|\le R}
    \end{equation}
    and there exists $\theta'<\theta$
    $$-\omega^2\nabla U.\nabla F\le \theta' F$$
    then $\mu$ satisfies a weighted Poincar\'e inequality with weight $\omega$ and constant
    $$C_{P,\omega}(\mu)\le \frac{1}{\theta-\theta'}(1+C_{P,\omega}(\mu_R)).$$
\end{enumerate}
\end{enumerate}
\end{proposition}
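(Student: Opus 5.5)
We treat the four parts separately. In each case the starting point is either a comparison of the two measures or a transfer of the inequality for $\nu$ to a suitably modified test function.

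\textbf{Bounded perturbation.} We use the variational characterization $\Var_\mu(f)\le \int (f-a)^2 d\mu$ with $a=\nu(f)$. Since $e^{-U}\le e^{-m_U}$,
$$\Var_\mu(f)\le e^{-m_U}\Var_\nu(f)\le e^{-m_U}C_{P,\omega}(\nu)\int|\nabla f|^2\omega^2 e^{U}e^{-U}d\nu,$$
and the last integral is exactly $\int|\nabla f|^2\tilde\omega^2 d\mu$ with $\tilde\omega=\omega e^{U/2}$. If $U$ is also bounded above, we bound $e^{U}\le e^{\sup U}$ and get the factor $e^{\sup U-\inf U}=e^{\Osc U}$. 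This is the Holley--Stroock argument.

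\textbf{Weighted Lipschitz case.} Normalize $\int e^{-U}d\nu=1$. For $f$ with $\mu(f)=0$, set $g=fe^{-U/2}$, so that $\Var_\mu(f)=\int g^2d\nu$. Then
$$\int g^2 d\nu=\Var_\nu(g)+\nu(g)^2,\qquad \nu(g)^2=\Bigl(\int f e^{-U/2}d\nu\Bigr)^2 .$$
The mean term is where the argument is delicate. Write $\nu(g)=\int f(e^{-U/2}-e^{-U})\,d\nu$ (using $\mu(f)=0$) and apply Cauchy--Schwarz. Since $\nu(e^{-U/2})\le 1$, this gives $\nu(g)^2\le (1-\nu(e^{-U/2})^2)\int g^2d\nu$. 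The cleaner route is to note that $\Var_\mu(f)=\inf_c\int(f-c)^2e^{-U}d\nu$ and to work with $g_c=(f-c)e^{-U/2}$. Once the mean is handled, we bound $\Var_\nu(g)$ by the weighted inequality for $\nu$:
$$\nabla g=e^{-U/2}\bigl(\nabla f-\tfrac12 f\nabla U\bigr),$$
and by Young's inequality $|a+b|^2\le(1+\varepsilon^{-1})|a|^2+(1+\varepsilon)|b|^2$,
$$C_{P,\omega}(\nu)\int|\nabla g|^2\omega^2d\nu\le (1+\varepsilon^{-1})C_{P,\omega}(\nu)\int|\nabla f|^2\omega^2d\mu+s\int f^2d\mu .$$
Absorbing the term $s\int f^2d\mu$ on the left gives the stated constant. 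I expect controlling $\nu(g)^2$ to be the main obstacle. What I would try first is to show that choosing the centering $c$ so that $\nu(g_c)=0$ costs nothing, since $\int g_c^2d\nu\ge\Var_\mu(f)$ for every $c$.

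\textbf{Weighted generator case.} Take the same $g$ but compute $\int|\nabla g|^2\omega^2d\nu$ exactly. The cross term $-\int f\omega^2\nabla f\cdot\nabla U\,d\mu=-\frac12\int\omega^2\nabla(f^2)\cdot\nabla U\,d\mu$ is integrated by parts with respect to $\mu$, using the symmetry of $L^\omega_{U+W}$. This gives $\frac12\int f^2L^\omega_{U+W}U\,d\mu$. Hence
$$\int|\nabla g|^2\omega^2d\nu=\int|\nabla f|^2\omega^2d\mu+\int f^2\bigl(\tfrac14|\nabla U|^2\omega^2+\tfrac12L^\omega_{U+W} U\bigr)d\mu .$$
Bounding the potential term by $s/C_{P,\omega}(\nu)$ times $\int f^2d\mu$ and absorbing as before yields $C_{P,\omega}(\nu)/(1-s)$.

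\textbf{Weighted Lyapunov case.} Observe that $L^\omega_{U+W}F=L^\omega_WF-\omega^2\nabla U\cdot\nabla F\le-(\theta-\theta')F+b\mathbf 1_{B(0,R)}$. Then we repeat the proof of Theorem \ref{lyapweakP} with the weighted generator:
$$\int (f-a)^2d\mu\le \frac1{\theta-\theta'}\int(f-a)^2\frac{-L^\omega F}{F}d\mu+\frac{b}{\theta-\theta'}\int_{B_R}(f-a)^2d\mu .$$
The first integral is bounded by $\int\omega^2|\nabla f|^2d\mu$ via the integration-by-parts trick of \cite{BBCG}, which applies to any symmetric diffusion. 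For the second, we take $a=\mu_R(f)$ and use the local weighted Poincar\'e constant $C_{P,\omega}(\mu_R)$. Together these give the stated bound, with $b$ absorbed into the constant as written.
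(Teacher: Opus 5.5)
Your proposal is correct and follows the paper's proof in all four parts. For (1) you use Holley--Stroock with $a=\nu(f)$. For (2a) and (2b) the centering $c$ with $\int (f-c)e^{-U/2}\,d\nu=0$ is exactly the paper's choice. It disposes of your anticipated obstacle entirely, so the Cauchy--Schwarz detour, which would cost a factor $\nu(e^{-U/2})^{-2}$, is unnecessary. Write the Young (resp. exact-expansion plus integration-by-parts against $L^\omega_{U+W}$) computation with $f-c$ in place of the $\mu$-centered $f$, and absorb. For (2c) you feed $L^\omega_{U+W}F\le-(\theta-\theta')F+b\mathbf 1_{B(0,R)}$ into the argument of Theorem~\ref{lyapweakP}, which is what the paper indicates.

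The one point to correct is your last sentence in (2c). The argument gives $\frac{1}{\theta-\theta'}\bigl(1+b\,C_{P,\omega}(\mu_R)\bigr)$, and $b$ cannot be absorbed. Take $\omega\equiv1$, $U\equiv0$ and dilate $x\mapsto\lambda x$, so that $\theta\mapsto\theta/\lambda^2$, $b\mapsto b/\lambda^2$, $C_P\mapsto\lambda^2C_P$. Letting $\lambda\to0$, the $b$-free bound would force $C_P(\mu)\le 1/\theta$. This fails, for instance, for a deep double well inside $B(0,R)$ with quadratic tails, where $C_P(\mu)$ is large but $\theta$ stays fixed. The stated constant should therefore carry the factor $b$, as in Theorem~\ref{lyapweakP}.
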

The proof is provided in the Appendix \ref{appendix_section_3}.

Besides, consider $g=(f-a)/\omega$. Assume that $\int (f-a)/\omega \,\,  d\mu=0$, then the weighted Poincar\'e inequality yields, for $\lambda>1$
\begin{eqnarray*}
\int \, \left(\frac{f-a}{\omega}\right)^2 \, d\mu &=& \Var_\mu(g) \leq C_{P,\omega}(\mu) \, \int \, |\nabla f   -  \, (\nabla \omega/\omega) \, (f-a)|^2  \, d\mu \\ &\leq& C_{P,\omega}(\mu) \left(\lambda \int \, |\nabla f|^2 d\mu \, + \, \frac{\lambda}{\lambda-1} \, \int (f-a)^2 \, |(\nabla \omega/\omega)|^2  \, d\mu\right) 
\end{eqnarray*}
so that, provided $C_{P,\omega}(\mu) \, |\nabla \omega|^2 <1$ we obtain the existence of some constant $C_{cP,\omega}(\mu)$ such that 
\begin{equation*}
\inf_a \int \, (f-a)^2 \, \frac{1}{\omega^2} \, d\mu \leq C_{cP,\omega}(\mu) \, \int \, |\nabla f|^2 d\mu \, .
\end{equation*}
The latter \emph{converse weighted inequality} has been discussed in \cite{DMC,Adrienetal} under the name Hardy/ Poincar\'e inequalitiy (the latter terminology unfortunately also covers different inequalities) in order to study fast diffusion equations. It is easily seen that, conversely, \eqref{eqconvweP} implies \eqref{eqweP} under the same assumption on $\omega$.

The relationship between (converse) weighted Poincar\'e inequalities and weak Poincar\'e inequalities is described below (see Theorem 4.6 in \cite{CGGR}).
\begin{theorem}\label{thmWPtoweP}
If $\mu$ satisfies the converse weighted Poincar\'e inequality \eqref{eqconvweP} and $\int (1/\omega^2) d\mu < +\infty$ then, $\mu$ satisfies a weak Poincar\'e inequality $$\Var_\mu(f) \leq \frac{C_{cP,\omega}(\mu)}{G(s)} \, \int |\nabla f|^2 \, d\mu \, + \, s \, \Osc^2(f)$$ with $G(s)=\inf\{u \; ; \; \mu(\omega^2 \geq \frac 1u)>s \}$ for $s<\frac 14$.
\end{theorem}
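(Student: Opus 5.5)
The strategy is to split the variance according to the level sets of the weight, in the same way the proof of Theorem \ref{lyapweakP} splits according to the level sets of $\phi$. Fix $f$ and $s<1/4$, and let $a$ be a (near-)optimizer in the converse inequality \eqref{eqconvweP} applied to $g=f$. For any $u>0$ write
$$\Var_\mu(f) \leq \int (f-a)^2 \, d\mu = \int (f-a)^2 \mathbf 1_{\omega^2< 1/u} \, d\mu + \int (f-a)^2 \mathbf 1_{\omega^2\geq 1/u} \, d\mu \, .$$

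\textbf{The region where the weight is small.} On $\{\omega^2<1/u\}$ we have $1\leq \frac{1}{u\,\omega^2}$. Hence the first term is bounded by
$$\frac1u \int (f-a)^2 \, \frac{1}{\omega^2}\, d\mu \leq \frac{C_{cP,\omega}(\mu)}{u} \int |\nabla f|^2 \, d\mu \, .$$
The hypothesis $\int (1/\omega^2)\, d\mu<+\infty$ is what makes the left side of \eqref{eqconvweP} finite for bounded $f$, so the infimum over $a$ is attained at a finite $a$. If one prefers, a minimizing sequence can be used instead.

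\textbf{The region where the weight is large.} This is the main obstacle. The second term should be at most $s\,\Osc^2(f)$, but $a$ was chosen to optimize the weighted integral, not the tail. A priori we only know $(f-a)^2\leq \Osc^2(f)$ if $a\in[\inf f,\sup f]$. I would first check that the optimal $a$ is the $\frac{1}{\omega^2}\mu$-weighted mean of $f$, so it does lie in $[\inf f,\sup f]$. Then the second term is at most $\Osc^2(f)\,\mu(\omega^2\geq 1/u)$.

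\textbf{Choosing $u$.} We want $\mu(\omega^2\geq 1/u)\leq s$ with $u$ as large as possible, and $G(s)$ is essentially this threshold. As $u$ increases, $1/u$ decreases, so $\mu(\omega^2\geq 1/u)$ is nondecreasing in $u$. The set of $u$ with $\mu(\omega^2\geq 1/u)>s$ is therefore an up-interval with infimum $G(s)$. For every $u<G(s)$ we get $\mu(\omega^2\geq 1/u)\leq s$, and so
$$\Var_\mu(f)\leq \frac{C_{cP,\omega}(\mu)}{u}\int|\nabla f|^2 \, d\mu + s\,\Osc^2(f) \, .$$
Letting $u\uparrow G(s)$ gives the stated inequality. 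We need $G(s)>0$, which holds when $\omega$ is finite $\mu$-a.e., since then $\mu(\omega^2\geq 1/u)\to 0$ as $u\to 0$.

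If the attainment or the location of $a$ turns out to be delicate, I would fall back on a cruder route. Replace $a$ by $a$ clipped to $[\inf f,\sup f]$; clipping can only decrease $(f-a)^2$ pointwise. Then both steps go through unchanged, with the converse inequality evaluated at the original near-optimal $a$.
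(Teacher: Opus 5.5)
Your argument is correct and is essentially the intended one. The paper gives no proof of its own here (it only cites Theorem 4.6 of \cite{CGGR}), and the clean constant $C_{cP,\omega}(\mu)/G(s)$ comes from exactly your route: split $\int (f-a)^2\,d\mu$ along $\{\omega^2<1/u\}$ and $\{\omega^2\geq 1/u\}$, take $a$ to be the $\omega^{-2}\mu$-mean of $f$ (which is where $\int \omega^{-2}\,d\mu<+\infty$ enters), and let $u\uparrow G(s)$. Your clipping fallback is also valid and shows the integrability hypothesis is not actually needed for the inequality, since clipping $a$ to $[\inf f,\sup f]$ only decreases $(f-a)^2$ pointwise.
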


Let us start by giving examples of weighted Poincar\'e inequalities for our favorite two classes of measures: generalized Cauchy and Subbotin types. Then we will consider perturbation examples of these classes.
\begin{example}\label{exwPcauchy}
 \textbf{Generalized Cauchy distributions.}  

Let $\mu_\alpha(dx)=z_\alpha^{-1} (1+|x|^2)^{-(d+\alpha)/2}$ with $z_\alpha= \frac{\Gamma(\alpha/2) \, \pi^{d/2}}{\Gamma((\alpha+d)/2)}$ and $\alpha>0$.

This family of examples has been the most studied. As we already mentioned, converse weighted inequalities appeared in \cite{DMC,Adrienetal} as a tool for studying non linear diffusions. It is shown in \cite{Adrienetal} that $\omega(x)=\sqrt{1+|x|^2}$ is the good weight, and that $C_{cP,\omega}(\mu_\alpha)\leq 1/(\alpha+d)$ if $\alpha \geq d \geq 3$, while the obtained bound for $C_{cP,\omega}(\mu_\alpha)$ when $\alpha <d$ depends on the dimension $d$.

Later in \cite{BLweight}, using some generalization of the Brascamp-Lieb inequality, the authors derived both a weighted and a converse weighted inequality for large $\alpha$. Namely, it is shown in \cite{BLweight} Theorem 3.1 and Corollary 3.2 that
\begin{proposition}\label{propweightpoinccauchy}
If $\alpha>d$, $\mu_\alpha$ satisfies a weighted Poincar\'e inequality with optimal weight $\omega(x)=\sqrt{1+|x|^2}$ and constant $C_{P,\omega}(\mu_\alpha) \leq 2/(\alpha+d-2)$ (actually $(1+o(1))/(d+\alpha)$ where $o(1) \to 0$ as $\alpha \to +\infty$, see the remark below Theorem 3.1 in \cite{BLweight}).

If $\alpha \geq d+2$, $\mu_\alpha$ satisfies a converse weighted Poincar\'e inequality with optimal weight $\omega(x)=\sqrt{1+|x|^2}$ and constant $C_{cP,\omega}(\mu_\alpha)\leq 1/(\alpha+d)$.
\end{proposition}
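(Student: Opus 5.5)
The plan is to prove both statements of Proposition \ref{propweightpoinccauchy} through a Brascamp--Lieb type argument, written with respect to the Cauchy structure. Put $V(x)=\frac{d+\alpha}{2}\ln(1+|x|^2)$, $\omega^2=1+|x|^2$, and let $L^\omega=L^\omega_V$ be the weighted generator \eqref{eq:generator}. It is symmetric in $\mathbb L^2(\mu_\alpha)$ and has Dirichlet form $\int \omega^2|\nabla f|^2 d\mu_\alpha$.

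For the weighted inequality I will use the $\Gamma_2$ / $L^2$ Hörmander method. Take $f$ with $\mu_\alpha(f)=0$ and solve $-L^\omega u=f$. Then
\[
\Var(f)=\int \omega^2 \nabla u\cdot\nabla f\,d\mu_\alpha=\int f(-L^\omega u)\,d\mu_\alpha .
\]
The Bochner identity for $L^\omega$ gives
\[
\int (L^\omega u)^2 d\mu_\alpha=\int \Gamma_2^\omega(u)\,d\mu_\alpha ,
\]
and the goal is a curvature bound
\[
\Gamma_2^\omega(u)\ge \kappa\,\omega^2|\nabla u|^2 \quad\text{with } \kappa=\frac{\alpha+d-2}{2}
\]
(or $\kappa\to\alpha+d$ asymptotically). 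Given that bound, Cauchy--Schwarz finishes the argument:
\[
\Var(f)^2\le \int\omega^2|\nabla u|^2d\mu_\alpha\int\omega^2|\nabla f|^2d\mu_\alpha\le \kappa^{-1}\Var(f)\int\omega^2|\nabla f|^2 d\mu_\alpha .
\]

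The computation goes as follows. Write $\Gamma_2^\omega$ through the Bakry--Émery formula with the metric $\omega^{-2}\mathrm{Id}$. It is the Hessian term plus a Ricci term of the conformal metric $g=\omega^{-2}|dx|^2$ plus $\nabla^2_g(V-\text{log-volume correction})$. The metric $(1+|x|^2)^{-1}|dx|^2$ is essentially hyperbolic-type near infinity, so its Ricci curvature is bounded below by a negative constant of order $d$. The potential part contributes an amount of order $\alpha+d$. When $\alpha>d$ the sum stays positive, which explains the threshold.

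I expect this explicit curvature computation in dimension $d$ to be the main obstacle. I would first try to simplify it using the rotational structure: reduce to radial and spherical components and check positivity of a $2\times2$ quadratic form.

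For the converse inequality with $\alpha\ge d+2$, I would apply the same $\mathbb L^2$ duality to the unweighted generator $L_V$, with target weight $1/\omega^2$. That is, I want to show
\[
\int (g-a)^2\omega^{-2}d\mu_\alpha\le \frac{1}{\alpha+d}\int|\nabla g|^2d\mu_\alpha .
\]
The change of measure $d\mu_\alpha\,\omega^{-2}\propto d\mu_{\alpha+2}$ turns the left side into a variance under $\mu_{\alpha+2}$ (up to the normalization $z_{\alpha+2}/z_\alpha$). So the claim becomes a mixed Poincar\'e inequality:
\[
\Var_{\mu_{\alpha+2}}(g)\le c\int |\nabla g|^2\,\omega^2 d\mu_{\alpha+2} .
\]
This is exactly the weighted inequality for $\mu_{\alpha+2}$, whose first part requires $\alpha+2>d$; the sharper hypothesis $\alpha\ge d+2$ is there to get the clean constant. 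Tracking the constants should then give $C_{cP,\omega}\le 1/(\alpha+d)$, using the first part with the sharper $\kappa$.

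As a fallback, in case the constants don't close, I would use the Lyapunov route of Theorem \ref{thmlyapweight} with $F=(1+|x|^2)^k$. It yields both inequalities with the weight $\omega^2=1+|x|^2$ but with non-sharp constants.
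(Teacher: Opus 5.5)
\textbf{Gap in the first part.} The step your first part rests on is false: there is no $\kappa>0$ with $\Gamma_2^\omega(u)\ge\kappa\,\omega^2|\nabla u|^2$ pointwise, for any $\alpha$ and $d$. The paper says so in Example \ref{exwPcauchy}: $L^\omega_\alpha$ satisfies $CD(0,+\infty)$ but no $CD(\rho,+\infty)$ with $\rho>0$ (Proposition 4.3 of \cite{Hug}).

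You can check this on radial functions. Set $t=\mathrm{arcsinh}\,|x|$. Then $L^\omega$ acts on radial $u$ as $\partial_t^2+\big((d-1)\coth t-(d+\alpha-1)\tanh t\big)\partial_t$. Hence $\Gamma^\omega(u)=\dot u^2$ and $\Gamma_2^\omega(u)=\ddot u^2+\big((d-1)\sinh^{-2}t+(d+\alpha-1)\cosh^{-2}t\big)\dot u^2$. At points where $\ddot u=0$, the ratio is about $(2d+\alpha-2)/|x|^2\to0$.

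Your geometric heuristic is also wrong. The metric $(1+|x|^2)^{-1}|dx|^2=dt^2+\tanh^2t\,d\sigma^2_{S^{d-1}}$ is asymptotically a cylinder with nonnegative Ricci curvature, not hyperbolic. With respect to its volume, $\mu_\alpha$ has density $\propto\cosh^{-\alpha}t$, i.e. an asymptotically linear potential. The Bakry--\'Emery tensor is positive for every $\alpha>0$ but degenerates in the radial direction, so no ``Ricci versus potential'' balance explains the threshold $\alpha>d$.

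Your Cauchy--Schwarz closing only needs the integrated bound $\int(L^\omega u)^2d\mu_\alpha\ge\kappa\int\omega^2|\nabla u|^2d\mu_\alpha$. But that bound is just a restatement of the spectral gap you want, so the real difficulty is untouched. What is missing is a way to obtain it without pointwise curvature. One option is Huguet's integrated $\Gamma_2$ argument \cite{Hug}. The other is the route the paper actually quotes: the generalization of the Brascamp--Lieb inequality to heavy-tailed ($\kappa$-concave) measures in \cite{BLweight}, which is where the weight $1+|x|^2$ and both constants come from.

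\textbf{The converse part.} Your change of measure is right, and even exact. Since $\omega^{-2}d\mu_\alpha=(z_{\alpha+2}/z_\alpha)\,d\mu_{\alpha+2}$ and $d\mu_\alpha=(z_{\alpha+2}/z_\alpha)\,\omega^2d\mu_{\alpha+2}$, the normalizations cancel and $C_{cP,\omega}(\mu_\alpha)=C_{P,\omega}(\mu_{\alpha+2})$.

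However, the first part applied at $\alpha+2$ only gives $2/(\alpha+d)$. Your ``$\kappa\to\alpha+d$'' is asymptotic, and nothing in the proposal supplies the missing factor $2$ or uses $\alpha\ge d+2$. The argument closes with a sharp non-asymptotic bound $C_{P,\omega}(\mu_{\alpha'})\le1/(\alpha'+d-2)$, e.g. Theorem \ref{thmweightpoinccauchy} (case $\alpha'\ge d+2$, or $\alpha'\ge2$ if $d=1$). Combined with your identity, this gives $C_{cP,\omega}(\mu_\alpha)\le1/(\alpha+d)$ already for $\alpha\ge d$ (every $\alpha>0$ when $d=1$). That is a worthwhile observation, going beyond what the paper extracts from \eqref{eqcaucwptrick}.

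\textbf{Other points.} The Lyapunov fallback cannot deliver the stated constants: Example \ref{exwPcauchy} shows that Theorem \ref{thmlyapweight} gives the right weight but very poor constants. Finally, you do not address the optimality of the weight. The paper obtains it from the correspondence with weak inequalities and concentration: a weight with $\omega^2=o(|x|^2)$ would force tails lighter than any power of $|x|$, which $\mu_\alpha$ does not have.
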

Notice that the range for $\alpha$ in the converse weighted inequality is smaller than in \cite{Adrienetal}. The weight is optimal thanks to the correspondence with weak inequalities and concentration.

Also notice that, since $|\nabla \omega|<1$ one can use the change of functions we previously described and get that for $1<\lambda<1/C_{P,\omega}(\mu_\alpha)$, $$C_{cP,\omega}(\mu_\alpha)\leq \frac{\lambda \, C_{P,\omega}(\mu_\alpha)}{(\lambda -1)(1-\lambda C_{P,\omega}(\mu_\alpha))} \, .$$ Optimizing in $\lambda$, provided $C_{P,\omega}(\mu_\alpha) < 1$, we obtain for $\lambda=1/\sqrt{C_{P,\omega}(\mu_\alpha)}$, 
\begin{equation}\label{eqcaucwptrick}
C_{cP,\omega}(\mu_\alpha) \, \leq \, C_{P,\omega}(\mu_\alpha)/(1-\sqrt{C_{P,\omega}(\mu_\alpha)})^2 \, .
\end{equation}
This bound is also derived in Proposition 3.3 of \cite{BLweight}.

Finally in the very recent \cite{Hug}, a complete description of weighted inequalities is obtained. The following is Theorem 1.1 in \cite{Hug}.
\begin{theorem}\label{thmweightpoinccauchy}
$\mu_\alpha$ always satisfies some weighted Poincar\'e inequality with optimal weight $\omega(x)=\sqrt{1+|x|^2}$ and constant $C_{P,\omega}(\mu_\alpha)$ less than
\begin{enumerate}
\item[(1)] \quad for $d=1$, $4/\alpha^2$ if $0<\alpha\leq 2$; $1/(\alpha-1)$ if $\alpha\geq 2$,
\item[(2)] \quad for $d \geq 2$, $4/\alpha^2$ if $0<\alpha\leq 4$; $1/2(\alpha-2)$ if $4\leq \alpha\leq d+2$; $1/(\alpha+d-2)$ if $\alpha \geq d+2$.
\end{enumerate}
\end{theorem}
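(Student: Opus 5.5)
The plan is to reduce the statement to a one-dimensional radial problem and treat the radial part with a weighted Hardy-type argument. Write $x=r\theta$ with $r=|x|$ and $\theta\in S^{d-1}$. The weighted Dirichlet form then splits as
\[
\int |\nabla f|^2(1+|x|^2)\,d\mu_\alpha=\int \left(|\partial_r f|^2+\frac{|\nabla_\theta f|^2}{r^2}\right)(1+r^2)\,d\mu_\alpha ,
\]
and the variance is decomposed as
\[
\Var_{\mu_\alpha}(f)=\Var_{\text{rad}}(\bar f)+\mathbb E[\Var_\sigma(f(r\cdot))],
\]
where $\bar f(r)$ is the spherical mean and $\sigma$ is the uniform measure on the sphere.

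\textbf{Angular part.} Poincaré on the sphere (spectral gap $d-1$) gives
\[
\Var_\sigma(f(r\cdot))\le \frac{r^2}{d-1}\int |\nabla_\theta f|^2/r^2\,d\sigma .
\]
Since $r^2\le 1+r^2$, the angular contribution is bounded by $\frac{1}{d-1}$ times the angular part of the weighted energy. The radial derivative is not needed here.

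\textbf{Radial part.} The radial law has density $\propto r^{d-1}(1+r^2)^{-(d+\alpha)/2}$ on $\mathbb R^+$. The aim is a 1D weighted Poincaré inequality
\[
\Var(\bar f)\le C\int |\bar f'|^2(1+r^2),
\]
with $|\bar f'|^2\le$ the spherical mean of $|\partial_r f|^2$ by Jensen. Substituting $r=\sinh t$ turns $(1+r^2)\partial_r^2$ into roughly $\partial_t^2$, and the radial law becomes a measure on $\mathbb R^+$ with potential $V(t)\approx \alpha t$ at infinity. For large $t$ this is like an exponential law, whose Poincaré constant is $4/\alpha^2$. That explains the regime $\alpha$ small and the value $4/\alpha^2$. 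For larger $\alpha$ the curvature term dominates, and a Bakry–Émery/Brascamp–Lieb computation (as in Proposition \ref{propweightpoinccauchy}) gives $1/(\alpha+d-2)$ when $\alpha\ge d+2$.

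The cleanest route is to apply the Lyapunov criterion with test function $F(t)=e^{\alpha t/2}$ in the $t$ variable, or equivalently the Muckenhoupt/Hardy criterion on the half-line. The Hardy route yields the constant in the form $4\sup_r \mu([r,\infty))\int_0^r \frac{1}{(1+s^2)p(s)}\,ds$. Computing this precisely gives $4/\alpha^2$ for small $\alpha$.

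For the intermediate regime $4\le\alpha\le d+2$, the plan is a $\Gamma_2$ computation for the weighted generator $L^\omega$. One writes the spectral gap of $-L^\omega$ and bounds it below via the radial ODE for the first eigenfunction, testing $f=x_i$ and $f=|x|^2$-type candidates. Collecting the three regimes and taking, for each $\alpha$, the better of the radial and angular constants gives the claimed piecewise bound. The main obstacle is obtaining these \emph{sharp} numerical constants, especially $1/2(\alpha-2)$, which presumably comes from the second radial eigenvalue competing with the angular mode. I would attack it by an explicit eigenfunction computation for the 1D Sturm–Liouville operator $(1+r^2)^{\dots}$ in hypergeometric form.
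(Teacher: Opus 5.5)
\textbf{Gap 1: the angular step.} Your angular step cannot give the stated constants. You bound $\mathbb E[\Var_\sigma(f(r\cdot))]$ by the angular energy alone, using the spectral gap $d-1$ of the sphere and $r^2\le 1+r^2$. This gives the constant $1/(d-1)$, and that value is sharp for this particular estimate: for $f=g(r)\theta_1$ with $g$ supported at large $r$, the ratio of angular variance to weighted angular energy tends to $1/(d-1)$.

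Splitting the variance into a radial and an angular part produces the \emph{maximum} of the two constants, not ``the better'' one as you write at the end. So your scheme gives at best $\max(C_{\rm rad},1/(d-1))$. This is strictly larger than the claimed bound for every $\alpha\ge d+2$, where $1/(\alpha+d-2)<1/(d-1)$. It is also too large, for instance, for every $\alpha>2$ when $d=2$.

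The extremal function $f=x_i$ shows what is missing. Its spherical means vanish, so its whole variance $\mathbb E[r^2]/d=1/(\alpha-2)$ is angular. Yet its Rayleigh quotient $1/(\alpha+d-2)$ uses the full energy $\mathbb E[1+r^2]=(\alpha+d-2)/(\alpha-2)$, which includes the radial derivative $\partial_r x_i=\theta_i$. The non-radial part $f-\bar f$ must therefore be controlled by its radial \emph{and} angular energies. One way is to expand it as $\sum_{k\ge1}g_k(r)Y_k(\theta)$ in spherical harmonics and prove, for each $k\geq 1$, the one-dimensional inequality $\int g^2\,p\,dr\le C\int(1+r^2)\,(g'^2+k(k+d-2)g^2/r^2)\,p\,dr$. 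Your plan contains no step of this kind.

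\textbf{Gap 2: the sharp constants.} The tools you invoke do not produce sharp constants, and the step meant to handle $4\le\alpha\le d+2$ goes in the wrong direction.
\begin{itemize}
\item The Muckenhoupt criterion determines a Poincar\'e constant only up to a universal factor. Moreover, the quantity you display is infinite for $d\ge2$, because $1/p(s)\sim c\,s^{1-d}$ is not integrable at $0$; one has to split at a median.
\item Proposition \ref{propweightpoinccauchy} gives $2/(\alpha+d-2)$, not $1/(\alpha+d-2)$.
\item The Lyapunov route of Theorem \ref{thmlyapweight} carries a local Poincar\'e constant and is far from sharp.
\item A pointwise Bakry--\'Emery argument is unavailable. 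The operator $L^\omega=(1+|x|^2)\Delta-(d+\alpha-2)\,x\cdot\nabla$ satisfies $CD(0,\infty)$ but no $CD(\rho,\infty)$ with $\rho>0$. This is precisely why \cite{Hug}, whose proof the paper only quotes, uses the integrated $\Gamma_2$ criterion instead.
\item Testing $x_i$ and $|x|^2$ yields the Rayleigh quotients $1/(\alpha+d-2)$ and $1/(2(\alpha-2))$. These are \emph{lower} bounds on $C_{P,\omega}(\mu_\alpha)$ showing optimality, not the inequality. You would still have to show that nothing else in the spectrum of $-L^\omega$ lies lower, including the essential spectrum, which starts at $\alpha^2/4$ and is the source of $4/\alpha^2$.
\end{itemize}

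Your substitution $r=\sinh t$ is the right tool for such a spectral proof. For $d=1$ it turns the problem exactly into the ordinary Poincar\'e inequality for $\cosh^{-\alpha}(t)\,dt$ on $\mathbb R$. This is a solvable P\"oschl--Teller problem whose gap is $\alpha^2/4$ for $\alpha\le 2$ and $\alpha-1$ (eigenfunction $\sinh t=x$) for $\alpha\ge2$, which is exactly item (1). For $d\ge2$ the same complete analysis would be needed in every harmonic sector, and the proposal leaves it undone.
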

Notice that for large $\alpha$ we recover the first part of Proposition \ref{propweightpoinccauchy}, but using \eqref{eqcaucwptrick} is far from providing us with the converse inequality in Proposition \ref{propweightpoinccauchy}. Nevertheless, \eqref{eqcaucwptrick} furnishes some bound for $C_{cP,\omega}$ as soon as $\alpha > 2$.

In \cite{CGGR} this example (in the more general setting of $\kappa$-concave distributions, also studied in \cite{BLweight2}) is studied as an application of Theorem \ref{thmlyapweight}. However the study of the pre-factor is not done. We shall below carefully (but briefly) follow the calculations of \cite{CGGR} in order to give a precise estimate on the constants and thus will fill the gap $0<\alpha \leq 2$.

Choose $F(x)=(1+|x|^2)^{(k/2)+1}$ for some $\alpha>k>0$. Then $$L_VF=(k+2) \, (1+|x|^2)^{(k/2)-1} \, (\alpha+d-k \, - \, (\alpha-k)(1+|x|^2))$$ so that choosing 
\begin{equation}\label{eqR}
R=\sqrt{\frac{d+\varepsilon}{\alpha-k - \varepsilon}} \; \textrm{ for some $\varepsilon< \alpha-k$ },
\end{equation}
for $|x|\geq R$ it holds $$L_V F \leq - \, (k+2)\varepsilon \,  F^{k/(k+2)} \, .$$ It follows that $\phi(u)= \varepsilon (k+2) \, u^{k/(k+2)}$ so that $\phi'(u)= \varepsilon k u^{-2/(k+2)}$ and $$\frac{1}{\phi'(F(x))}= \frac{1}{\varepsilon \, k} \, (1+|x|^2) \, ,$$ so that $$(1+(1/\phi'(F(x)))\leq \frac{\varepsilon k + 1}{\varepsilon k} \, (1+|x|^2) \, . $$ Except for the pre-factor the result does not depend on $k$. Thus  we can always choose $k\leq 2$ so that for all $x$, $(1+|x|^2)^{(k/2)-1} \leq 1$.  Finally, we have obtained for $\varepsilon < \alpha-k$, 
$$L_V F \leq  \, -  \, \varepsilon \, (k+2)  \, F^{k/(k+2)} + (k+2)\left((d+\alpha-k)+\varepsilon (1+R^2)^{{k/2}}\right)  \, \mathbf 1_{|x|\leq R} \, .$$
We have thus obtained, for $0<k<\min(2,\alpha)$, $$\Var_{\mu_\alpha}(g) \leq  \, \max(1,C) \left(\frac{1+\varepsilon k}{\varepsilon k}\right) \, \int \, |\nabla g|^2 \, (1+|x|^2) \, d\mu_\alpha $$ with 
$$C= \frac{(d+\alpha-k)+\varepsilon (1+R^2)^{{k/2}}}{\varepsilon} \; C_P(\mu_{\alpha,R}) \, .$$
If one gets the optimal weight, the constant is very bad, and still worse when evaluating $C_P(\mu_{\alpha,R})$. However we also obtain, in all cases, the following converse inequality, for  $0<k<\min(2,\alpha)$,
\begin{equation}\label{eqcauchyconverse}
\inf_a \, \int \frac{(g-a)^2}{1+|x|^2} \, \mu_\alpha(dx) \, \leq \, \frac{C}{\varepsilon} \, \int \, |\nabla g|^2 \, \mu_\alpha(dx)
\end{equation}
where $$C= 1/{(k+2)} + ((d+\alpha-k)+\varepsilon (1+R^2)^{{k/2}}) \; \frac{d+2}{d(d-1)} \, R^2 (1+R^2)^{(\alpha+d)/2} \, ,$$ for $d\geq 2$ and $R$ defined in \eqref{eqR}. For $d=1$ the factor $(d+2)/(d(d-1))$ is replaced by $4/\pi^2$.
This result covers the missing case $0<\alpha\leq 2$. 

The method of proof in \cite{Hug} consists in observing that $\mu_\alpha$ is symmetric for the weighted operator $$L^\omega_\alpha f = \omega^2 \Delta f +\left(\nabla \omega^2-\omega^2 \, \frac{(d+\alpha)x}{1+|x|^2}\right) .\nabla f \, .$$  The associated carr\'e du champ is thus $\omega^2 \, |\nabla f|^2$. It turns out that this operator satisfies the $CD(0,+\infty)$ curvature condition (see \cite{BaGLbook}) but no $CD(\rho,+\infty)$ condition for $\rho>0$. This is shown in Proposition 4.3 of \cite{Hug}. Huguet thus replaces the usual Bakry-Emery criterion by the integrated criterion in order to prove a usual Poincar\'e inequality with this new carr\'e du champ, i.e. a weighted Poincar\'e inequality. \hfill $\diamondsuit$
\end{example}

\begin{remark}\textbf{Convolution of generalized Cauchy distributions}

Unlike the Gaussian family the generalized Cauchy family is not stable, making the analysis of convolutions more complex. \cite{NAD} provides explicit expressions for the convolution in dimension $d=1$ when both $\alpha_1$ and $\alpha_2$ are odd, and the scale parameter is equal to $1$. In particular, in this situation it holds that
    $$
    \mu_{\alpha_1}*\mu_{\alpha_2}(x) = \mathcal{O}\left((1+\vert x\vert^2)^{-{(1+\alpha_{\min})/2}}\right), \quad \alpha_{\min}=\min\{\alpha_1, \alpha_2\}.
    $$
More generally, denote $F(x)= \frac{\mu_{\alpha_1}*\mu_{\alpha_2}(x)}{\mu_{\alpha_{\min}}(x)}$. We may write 
\begin{eqnarray*}
F(x) &=& C(\alpha_1,\alpha_2,d) \, \int \, \frac{(1+|x|^2)^{(d+\alpha_{min})/2}}{(1+|x-y|^2)^{(d+\alpha_{min})/2} \, (1+|y|^2)^{(d+\alpha_{max})/2}} \, dy \\ &:=& C(\alpha_1,\alpha_2,d) \, \int G_{d,\alpha_{min},\alpha_{max}}(x,y) \, dy \, .
\end{eqnarray*}
$F$ is continuous and everywhere positive. In addition, on one hand 
\begin{eqnarray*}
G_{d,\alpha_{min},\alpha_{max}}(x,y) &\leq & \frac{(1+2|x-y|^2+2|y|^2)^{(d+\alpha_{min})/2}}{(1+|x-y|^2)^{(d+\alpha_{min})/2} \, (1+|y|^2)^{(d+\alpha_{max})/2}}\\ &\leq& c(m) \, \frac{(1+|x-y|^2)^{(d+\alpha_{min})/2} + (1+|y|^2)^{(d+\alpha_{min})/2}}{(1+|x-y|^2)^{(d+\alpha_{min})/2} \, (1+|y|^2)^{(d+\alpha_{max})/2}}\\ &\leq& \frac{c(m)}{(1+|y|^2)^{(d+\alpha_{max})/2}} + \frac{c(m)}{(1+|x-y|^2)^{(d+\alpha_{min})/2} \, (1+|y|^2)^{(\alpha_{max}-\alpha_{min})/2}} \, \\ &\leq& \frac{c(m)}{(1+|y|^2)^{(d+\alpha_{max})/2}} + \frac{c(m)}{(1+|x-y|^2)^{(d+\alpha_{min})/2}} \, ,
\end{eqnarray*}
so that $F$ is upper bounded. 

On the other hand, $$F(x) \geq C(\alpha_1,\alpha_2,d) \, \int \, \mathbf 1_{|y|\leq 1} \, G_{d,\alpha_{min},\alpha_{max}}(x,y) \, dy$$ and for $|x|>2$, $|x-y| \mathbf 1_{|y|\leq 1} \leq |x|+1$. Hence, for $|x|>2$, 
$$F(x) \geq C(\alpha_1,\alpha_2,d) \, \int\, \mathbf 1_{|y|\leq 1} \, \frac{(1+|x|^2)^{(d+\alpha_{min})/2}}{(2+|x|^2)^{(d+\alpha_{min})/2} \, (1+|y|^2)^{(d+\alpha_{max})/2}} \, dy$$ which is bounded below by a positive constant. For $|x|\leq 2$ one also have a positive lower bound thanks to the continuity of $F$.

Hence $\mu_{\alpha_1}*\mu_{\alpha_2}$ is a bounded perturbation $F \, \mu_{\alpha_{min}}$ where $F$ is bounded from below and from above by positive constants. According to Proposition \ref{propWeightPperturb} (1), $\mu_{\alpha_1}*\mu_{\alpha_2}$ satisfies a weighted Poincar\'e inequality with the same optimal weight $\omega(x)=\sqrt{1+|x|^2}$.
\hfill $\diamondsuit$
\end{remark}

More generally one can ask about the weight for a convolution product $\mu*\nu$. As for Remark \ref{remadcom} we think that, if $\nu$ is compactly supported, the weight is the same as for $\mu$. We will not prove this claim. The proof is presumably similar to the case of weak inequalities studied in \cite{weakconvol} simply adapting the construction of the Lyapunov function.

\begin{example}\label{exwPsubbotin}
 \textbf{Subbotin distributions.}  

This case, (2) in Example \ref{exampweak2}, is less understood. The only known result seems to be Proposition 3.6 in \cite{CGGR}, where the Lyapunov function method is used. We will not give all the details, in particular some explicit (but of course not satisfying) bounds for the constants. The optimal weight in this case is $\omega^2(x)=1+(1+|x|)^{2(1-\alpha)}$.
\hfill $\diamondsuit$
\end{example}

Following the results in Proposition \ref{propWeightPperturb}, we analyze the behavior of weighted Poincar\'e inequalities under perturbations for generalized Cauchy and Subbotin distributions. We focus on unbounded perturbations.
\begin{example}\label{exwPcauchyperturbed}
 \textbf{Perturbations of generalized Cauchy distributions.} 
Recall that the weight function is of the form $\omega^2(x) = 1+\vert x\vert^2$.
\begin{enumerate}
    \item[(a)] (Weighted Lipschitz case) The perturbation needs to satisfy
    \begin{equation*}
        \vert \nabla U\vert^2\leq \frac{4}{(1+\varepsilon)\omega^2C_{P, \omega}(\nu)}.
    \end{equation*}
    Therefore, the ``maximal'' perturbation satisfying this is of the form
    \begin{equation}\label{eqweightedPperturbcauchylike}
        U(x)-U(0) = \beta \ln(1+\vert x\vert),\,\,\, \text{where}\,\,\,\beta^2=\frac{4}{(1+\varepsilon)C_{P, \omega}(\nu)}.     
    \end{equation}
    \item[(b)] (Weighted generator case) First, observe that
    $$\frac12|\nabla U|\omega^2+L^{\omega,\mu}U=\omega^2\nabla^2U+\langle\nabla\omega^2,\nabla U\rangle-\frac{1}{2}\omega^2\vert \nabla U\vert^2-\omega^2\langle \nabla W, \nabla U\rangle.$$
    The condition
    \begin{equation*}
        \omega^2\nabla^2U+\langle\nabla\omega^2,\nabla U\rangle\leq C
    \end{equation*}
    is equivalent to $\omega^2\nabla U$ being Lipschitz continuous.
    Note that potentials of the form \eqref{eqweightedPperturbcauchylike} also satisfy the following condition
\begin{equation}\label{eqconditionweightedlyapcaseperturb}
        s=\sup_x \frac{1}{2}C_{P,\omega}(\nu)\left(\frac12|\nabla U|\omega^2+L^{\omega,\mu}U\right)_+<1.
    \end{equation}
    \item[(c)] (Weighted Lyapunov condition) Suppose that $\beta=(\alpha+d)/2>1$. We first show that there exists a Lyapunov function $F\geq 1$ and $R, b, \theta>0$ such that
    $$L_W^\omega F:= \omega^2\Delta F-2(\beta -1)\langle x, \nabla F\rangle\leq -\theta F + b1_{\vert x\vert\leq R}.$$
    Consider $F = 1+\vert x\vert^2 = \omega^2$. Then,
    \begin{align*}
        L_W^\omega F= 2\omega^2 -4(\beta-1)\vert x\vert^2 = 2\omega^2-4(\beta-1)\omega^2\left(1-\frac{1}{\omega^2}\right).
    \end{align*}
    Choose $R$ such that $1-(1+\vert R-1\vert^2)^{-1} = (2(\beta-1))^{-1}$. For $|x|>R$, it follows that 
    \begin{equation*}
         L_W^\omega F\leq -\theta\omega^2,
    \end{equation*}
    with $\theta>0$. {Taking $b = (2+\theta)\omega^2(R)$}, we obtain the weighted Lyapunov condition \eqref{lyapw}. For a perturbation to be valid, there must exist $\theta'<\theta$ such that $-\omega^2\nabla U.\nabla F\le \theta' F$, which is equivalent to 
    $$-\langle x, \nabla U\rangle\leq \frac{\theta'}{2}.$$
    This condition is satisfied for potentials of the form \eqref{eqweightedPperturbcauchylike}. In this case, however, one has $\beta = \theta'/2$. 
\end{enumerate}
\hfill $\diamondsuit$
\end{example}
Similar perturbation results for Subbotin distributions are deferred to Appendix \ref{appendix_section_3}.

\begin{remark}\label{remcompare}
One can compare the perturbations of this section with Theorem \ref{thmperturbWPlyap} and Proposition \ref{propperturbcauchy} in the previous one. Roughly speaking, when one can compare all the involved inequalities (as in the Cauchy case), all results are similar. However, since the devil is in the detail, one can also easily see some differences, and fortunately enough, sometimes, some complementary results.
\hfill $\diamondsuit$
\end{remark}

\section{More on the links between weak and weighted Poincar\'e inequalities.} \label{secWPtowP}

We already saw in Theorem \ref{thmWPtoweP} that a converse weighted Poincar\'e inequality together with a tail control of the considered measure, implies a weak Poincar\'e inequality. In some situations, for instance for the generalized Cauchy distributions, weighted and converse weighted Poincar\'e inequalities can be compared. 

We shall here look at the converse way, i.e. from weak Poincar\'e to converse weighted Poincar\'e inequalities.

Introduce the standard (global) $L^2(\mu)$-capacity of a set:
\begin{equation}\label{eq:cap}
  Cap_\mu(A):=\inf\left\{ \int |\nabla \varphi|^2\,d\mu:\ \varphi\in\mathrm{Lip}_c(\mathbb R^d),\ 1_A\le \varphi\le 1\right\}.
\end{equation}
Capacity measure criteria for functional inequalities were introduced in \cite{BRstudia} based on results  in \cite{Masobolev}. They were further developed in \cite{BCR1,BCR2,BCR3,CGG} among others.

We assume that $\mu$ satisfies some weak Poincar\'e inequality \eqref{eqWPdef}, and that $\lim_{s\downarrow 0}\beta_\mu(s)=+\infty$, otherwise an ordinary Poincar\'e inequality is satisfied. Let us  state a first result (Theorem 2.1 in  \cite{BCR2}).
\begin{lemma}[Measure--capacity bound for weak Poincar\'e]\label{prop:cap-lb}
Assume \eqref{eqWPdef}. Then for every measurable set $A\subset\mathbb R^d$ with $\mu(A)=a\le\frac12$,
\begin{equation}\label{eq:cap-lower}
  Cap_\mu(A)\ \ge\ \frac{a}{4\,\beta_\mu(a/4)} \, .
\end{equation}
\end{lemma}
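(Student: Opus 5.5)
Fix a Lipschitz compactly supported $\varphi$ with $\mathbf 1_A\le \varphi\le 1$. It suffices to show $\int|\nabla\varphi|^2d\mu \ge a/(4\beta_\mu(a/4))$ and then take the infimum over $\varphi$. We may assume $\varphi\ge 0$, replacing it by $\varphi_+$, which does not increase the energy. So $0\le\varphi\le 1$, $\varphi=1$ on $A$, and $\Osc(\varphi)\le 1$. The plan is to apply the weak Poincar\'e inequality \eqref{eqWPdef} to $\varphi$ with $s=a/4$:
\begin{equation*}
\Var_\mu(\varphi)\le \beta_\mu(a/4)\int|\nabla\varphi|^2d\mu + \frac a4 .
\end{equation*}
If $\Var_\mu(\varphi)\ge a/2$, this immediately gives $\int|\nabla\varphi|^2d\mu\ge a/(4\beta_\mu(a/4))$.

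The main obstacle is that $\Var_\mu(\varphi)$ need not be large. When $\varphi$ is close to $1$ on a big set, the variance can be tiny even though $\varphi=1$ on $A$, so a cruder argument is needed in that regime. Write $m=\mu(\varphi)\ge a$.

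\emph{Case $m\le 1/2$.} Using $\varphi^2\ge \mathbf 1_A$ is too weak, since $\Var=\mu(\varphi^2)-m^2$ could be small when $\varphi$ is not near $0$ off $A$. The remedy is truncation. Set $\psi=(2\varphi-1)_+$, which still equals $1$ on $A$, lies in $[0,1]$, and satisfies $|\nabla\psi|\le 2|\nabla\varphi|$. Choosing level sets in this way lets us reduce to test functions whose mean is controlled. A cleaner standard device, as in \cite{BCR2}, is to compare with the median. I will first try to arrange, by replacing $\varphi$ with $\min(1,\varphi/t)$ or $(\varphi-t)_+/(1-t)$ for a suitable level $t$, that the new test function $\psi$ vanishes on a set of measure at least $1/2$. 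Then
\begin{equation*}
\Var_\mu(\psi)\ge \tfrac12\mu(\psi^2)\ge \tfrac a2 ,
\end{equation*}
because $\Var_\mu(\psi)\ge \inf_c\mu((\psi-c)^2)$ and splitting over $\{\psi=0\}$ and $A$ gives at least $\min(c^2/2,\ a(1-c)^2)$-type bounds.

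The constant-loss bookkeeping is the delicate point. The truncation $(\varphi-t)_+/(1-t)$ multiplies the energy by $(1-t)^{-2}$, so the chosen level must be controlled. I would pick $t$ as a median of $\varphi$ only when $\mu(\varphi>0)>1/2$, and otherwise keep $\varphi$ itself.

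To obtain the exact constant $a/(4\beta_\mu(a/4))$, I expect to instead use the lower bound $\Var_\mu(\varphi)\ge \mu(\varphi^2)(1-\mu(\varphi\ne0))$. This follows from Cauchy--Schwarz, $m^2\le \mu(\varphi^2)\mu(\varphi\neq0)$. If $\mu(\varphi\ne0)\le 1/2$, it gives $\Var\ge a/2$ and we conclude. If $\mu(\varphi\neq0)>1/2$, the capacity-minimizing sequence can be modified. Since $Cap_\mu(A)$ is an infimum, we may restrict to $\varphi$ with small support by first reducing via the monotonicity of capacity. This is exactly the step I would verify carefully against \cite{BCR2}, Theorem 2.1.
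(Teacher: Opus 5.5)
There is a genuine gap, and it is exactly the case you flagged, $\mu(\varphi\neq 0)>\tfrac12$. Your proposed fix (``since $Cap_\mu(A)$ is an infimum, we may restrict to $\varphi$ with small support by monotonicity of capacity'') is not a valid step. Monotonicity of capacity is monotonicity in the set $A$, not in the class of test functions. Shrinking the admissible class of an infimum can only \emph{increase} it, so a lower bound proved only for small-support $\varphi$ gives no lower bound on $Cap_\mu(A)$.

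The truncation devices do not help either: $(\varphi-t)_+/(1-t)$ with $t$ a median of $\varphi$ multiplies the energy by $(1-t)^{-2}$, and nothing prevents the median from being $1$. In fact this case cannot be closed at all with the global capacity \eqref{eq:cap}. Suppose $A\subset B(0,R_0)$ and $R\ge R_0$. The function $\varphi_R(x)=\min\bigl(1,(2R-|x|)_+/R\bigr)$ is Lipschitz, compactly supported, satisfies $\mathbf 1_A\le\varphi_R\le 1$, and $\int|\nabla\varphi_R|^2\,d\mu\le R^{-2}\to 0$. Hence $Cap_\mu(A)=0$ for every bounded $A$, while $a/(4\beta_\mu(a/4))>0$ whenever $a>0$. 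Read with \eqref{eq:cap}, the inequality is simply false for bounded sets of positive measure.

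The missing ingredient is the notion of capacity used in \cite{BCR2} (Theorem 2.1), which the paper cites for this lemma:
$Cap_\mu(A)=\inf\{\int|\nabla\varphi|^2\,d\mu:\ \mathbf 1_A\le\varphi\le\mathbf 1_\Omega,\ A\subset\Omega,\ \mu(\Omega)\le\tfrac12\}$.
In other words, admissible functions must vanish outside a set of measure at most $\tfrac12$. This is also what the appendix proof of Lemma \ref{lem:cap-to-weighted} actually needs, since there $\varphi_k$ vanishes outside $\{f>m\}$.

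With this definition your bad case never arises, and your Cauchy--Schwarz observation is the entire proof:
\begin{itemize}
\item $\mu(\varphi)^2\le\mu(\varphi^2)\,\mu(\Omega)$ gives $\Var_\mu(\varphi)\ge\tfrac12\mu(\varphi^2)\ge\tfrac a2$;
\item \eqref{eqWPdef} with $s=a/4$ and $\Osc(\varphi)\le 1$ gives $\tfrac a2\le\beta_\mu(a/4)\int|\nabla\varphi|^2\,d\mu+\tfrac a4$.
\end{itemize}
Taking the infimum over $\varphi$ and $\Omega$ yields the stated bound.
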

We have the following capacity measure to converse weighted Poincar\'e inequality. 
\begin{lemma}[Capacitary criterion for converse weighted Poincar\'e]\label{lem:cap-to-weighted}
Let $\omega\ge0$ be measurable and let $\nu=\omega^2 \, \mu$.
Assume there exists $C\in(0,\infty)$ such that
\begin{equation}\label{eq:nuCap}
  \int_A \omega^2 \, d\mu\ =:\ \nu(A)\ \le\ C \, Cap_\mu(A)\qquad\text{for every Borel }A\subset\mathbb R^d\text{ with }\mu(A)\le\frac12.
\end{equation}
Then for every locally Lipschitz $f$ and any median $m$ of $f$ (so that $\mu(f\ge m)\ge\tfrac12$ and $\mu(f\le m)\ge\tfrac12$),
\begin{equation}\label{eq:weightedPoincareC}
  \int (f-m)^2\,\omega^2 \,d\mu \ \le\ C'\,\int |\nabla f|^2\,d\mu.
\end{equation}
In particular,
\(
\inf_{c\in\mathbb R}\int (f-c)^2\,\omega^2 \, d\mu \le C'\,\int |\nabla f|^2\,d\mu.
\)
\end{lemma}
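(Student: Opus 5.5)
This is the classical Maz'ya argument, splitting $f$ at a median and slicing it into level sets. Subtract the median, so $m=0$. Write $f=f_+-f_-$ and use
$$(f-m)^2=f_+^2+f_-^2, \qquad |\nabla f|^2 = |\nabla f_+|^2+|\nabla f_-|^2 \quad \mu\text{-a.e.}$$
It then suffices to prove $\int g^2\omega^2\,d\mu\le C'\int|\nabla g|^2\,d\mu$ for a nonnegative locally Lipschitz $g$ with $\mu(g>0)\le\frac12$. Every superlevel set $\{g\ge t\}$, $t>0$, has $\mu$-measure at most $\frac12$, because $m=0$ is a median. These are exactly the sets to which hypothesis \eqref{eq:nuCap} applies. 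By monotone convergence, applied to $\min(g,N)$, we may also assume $g$ bounded. To make the capacity admissible test functions compactly supported, we either truncate with a cutoff $\chi_R$ and let $R\to\infty$, or simply note that $\mathrm{Cap}_\mu$ may be computed equally with locally Lipschitz functions satisfying $1_A\le\varphi\le 1$; I will adopt this convention.

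The core is a dyadic layer decomposition. For $k\in\mathbb Z$ set $\Omega_k=\{g\ge 2^k\}$ and
$$g_k=\min\big((g-2^{k})_+,\,2^{k}\big)/2^{k}.$$
Then $g_k$ equals $1$ on $\Omega_{k+1}$, lies in $[0,1]$, and vanishes off $\Omega_k$. So $g_k$ is an admissible test function for $\mathrm{Cap}_\mu(\Omega_{k+1})$, which gives
$$\mathrm{Cap}_\mu(\Omega_{k+1})\le 4^{-k}\int_{\{2^k\le g<2^{k+1}\}}|\nabla g|^2\,d\mu .$$
On the other side, we split according to the layers:
$$\int g^2\omega^2\,d\mu\le \sum_k 4^{k+2}\,\nu(\{2^{k+1}\le g<2^{k+2}\})\le 16\sum_k 4^{k}\,\nu(\Omega_{k+1}).$$
Applying \eqref{eq:nuCap} to $\Omega_{k+1}$ gives
$$\int g^2\omega^2\,d\mu\le 16\,C\sum_k\int_{\{2^k\le g<2^{k+1}\}}|\nabla g|^2\,d\mu\le 16\,C\int|\nabla g|^2\,d\mu .$$
The last step holds because the layers are disjoint. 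Summing the contributions of $f_+$ and $f_-$ yields \eqref{eq:weightedPoincareC} with $C'=16C$.

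The step I expect to need the most care is the reduction to a nonnegative function supported on a set of measure at most $\frac12$ while keeping the test functions admissible. This involves two points. First, the points where $g=0$ contribute nothing, and the contributions of $f_+$ and $f_-$ must add without cross terms; this uses $\nabla f_\pm=0$ a.e. off $\{\pm f>0\}$. Second, the compact-support requirement in the definition \eqref{eq:cap} must be handled. If needed, I will multiply $g_k$ by a cutoff equal to $1$ on a ball $B(0,R)$ and bound the capacity of $\Omega_{k+1}\cap B(0,R)$ instead, then let $R\to\infty$ using monotone convergence on the left-hand side. The cutoff gradient term is $O(1/R)$ times a bounded function and vanishes in the limit. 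The final "in particular" statement is immediate by taking $c=m$.
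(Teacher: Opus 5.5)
Your proof is correct and is essentially the paper's argument. You split $f-m$ into positive and negative parts, compare $\int f_+^2\omega^2\,d\mu$ with $\sum_k 4^{k}\nu(\{f_+>2^k\})$, use the piecewise-linear truncations $\min\bigl(1,(f_+-2^{k-1})_+/2^{k-1}\bigr)$ as capacity test functions, and sum over the disjoint layers to get $C'=16C$. On the $\mathrm{Lip}_c$ requirement in $Cap_\mu$, which the paper passes over, the two capacities do not literally coincide on unbounded sets (under $\mathrm{Lip}_c$ these have infinite capacity); it is your cutoff argument on $\Omega_{k+1}\cap B(0,R)$, run for each fixed $k$ with $R\to\infty$, that justifies the step, not the claimed equality of definitions.
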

The proof is given in Appendix \ref{appendix_section_4}.

We shall use these two lemmas in order to build an appropriate weight.

Let $x_0\in\mathbb R^d$ be fixed and define the tail function
\begin{equation}\label{eq:tail}
  s(r):=\mu\{x\in\mathbb R^d:\ |x-x_0|>r\},\qquad r\ge0.
\end{equation}
Note that $s$ is non-increasing and $s(r)\downarrow0$ as $r\uparrow\infty$. Also define
\begin{equation}\label{eq:h-def}
  h(u):=\frac{1}{4\,\beta_\mu(u/4)},\qquad u\in(0,1].
\end{equation}

\begin{proposition}[Explicit weight]\label{prop:explicit-weight}
Define
\begin{equation}\label{eq:omega-explicit}
  \omega^2(x)\ :=\ h\bigl(s(|x-x_0|)\bigr)
  \ =\ \frac{1}{4\,\beta_\mu\!\left(\tfrac14\,\mu\{y:\ |y-x_0|>|x-x_0|\}\right)}.
\end{equation}
Then $\omega^2$ is radial, non-increasing with respect to $|x-x_0|$, and $\omega(x)\to0$ as $|x|\to\infty$.
Moreover, for every Borel set $A$ with $\mu(A)\le\tfrac12$,
\begin{equation}\label{eq:nu-cap-small}
  \int_A \omega^2\,d\mu \ \le Cap_\mu(A).
\end{equation}
\end{proposition}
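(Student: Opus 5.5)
The plan is to check the qualitative properties directly from monotonicity, and to get \eqref{eq:nu-cap-small} from Lemma \ref{prop:cap-lb} through a rearrangement argument comparing $A$ with the exterior of a ball centred at $x_0$ having the same measure.

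\emph{Qualitative properties.} Radiality is immediate, since $\omega^2(x)$ depends only on $|x-x_0|$. As $|x-x_0|$ increases, $s(|x-x_0|)$ does not increase, so $s/4$ does not increase. Because $\beta_\mu$ is non-increasing, $\beta_\mu(s/4)$ is then non-decreasing, and $h(s(\cdot))$ is non-increasing. As $|x|\to\infty$ we have $s(|x-x_0|)\downarrow 0$. By the standing assumption $\beta_\mu(u)\to+\infty$ as $u\downarrow0$, so $\omega^2\to 0$.

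\emph{Capacity bound.} Fix a Borel $A$ with $a:=\mu(A)\le 1/2$. The case $a=0$ is trivial, so assume $a>0$. Lemma \ref{prop:cap-lb} gives $Cap_\mu(A)\ge a\,h(a)$. It therefore suffices to prove
\[
\int_A \omega^2\,d\mu\le a\,h(a).
\]
Set $r_a:=\inf\{r:\ s(r)\le a\}$.
\begin{itemize}
\item Points with $|x-x_0|<r_a$ satisfy $s(|x-x_0|)>a$. Since $h$ is non-decreasing in $u$, this gives $\omega^2(x)\ge h(a)$ there.
\item Points with $|x-x_0|> r_a$ satisfy $s(|x-x_0|)\le a$, so $\omega^2(x)\le h(a)$.
\end{itemize}
The cheapest route is the crude bound $\int_A\omega^2\,d\mu\le a\sup_A\omega^2$, but this fails: $A$ may sit near $x_0$, where $\omega^2$ is largest. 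So I would use a layer-cake/rearrangement argument instead. The sets $\{\omega^2>t\}$ are centred balls, whose measure I call $m(t)$. Then
\[
\int_A\omega^2\,d\mu=\int_0^\infty \mu(A\cap\{\omega^2>t\})\,dt\le\int_0^\infty\min(a,m(t))\,dt .
\]

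\emph{Main obstacle.} The difficulty is that this rearrangement puts $A$ at the centre. For $\omega^2$ evaluated near $x_0$, where $s(|x-x_0|)$ is close to $1$, the value $h$ is comparable to $1/(4\beta_\mu(1/4))$, which can exceed $h(a)$. So the inequality $\int_A\omega^2\,d\mu\le a\,h(a)$ is not automatic; one has to exploit that Lemma \ref{prop:cap-lb} holds for every set. My first attempt is to decompose $A$ into dyadic shells $A_k=A\cap\{2^{-k-1}<s(|x-x_0|)\le 2^{-k}\}$, bound $\omega^2\le h(2^{-k})$ on $A_k$, and apply Lemma \ref{prop:cap-lb} to the superlevel sets $A\cap\{s\le 2^{-k}\}$. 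These pieces would then be combined using subadditivity of capacity, possibly after adjusting by a constant factor. If that fails, I would fall back on the standard fact that the capacity of a set is at least the capacity of the complement of a ball of the same measure. That fact holds for radial measures, and in general one would use the capacity--measure comparison of \cite{BCR2} to reduce to sets $A$ contained in $\{|x-x_0|\ge r_a\}$, where the pointwise bound $\omega^2\le h(a)$ finishes the argument.
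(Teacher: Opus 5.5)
Your treatment of the qualitative properties is fine. The gap is in the capacity bound, at the very reduction you make: the inequality $\int_A\omega^2\,d\mu\le a\,h(a)$ is not merely non-automatic, it is false. Since $\beta_\mu$ is non-increasing with $\beta_\mu(0^+)=+\infty$, $h$ is non-decreasing with $h(0^+)=0$. On the centred ball $\{x:\ s(|x-x_0|)\ge 1/2\}$, which has $\mu$-measure $1/2$ when $\mu$ has a density, one has $\omega^2\ge h(1/2)>0$. So any $A$ inside it with $\mu(A)=a$ satisfies $\int_A\omega^2\,d\mu\ge a\,h(1/2)>a\,h(a)$ for all small $a$.

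Neither of your repairs avoids this.
\begin{itemize}
\item In the dyadic scheme, a piece $B$ of the $k$-th shell with $\mu(B)\ll 2^{-k}$ carries $\omega^2\approx h(2^{-k})$, but Lemma \ref{prop:cap-lb} only returns $\mu(B)\,h(\mu(B))$. Subadditivity bounds the capacity of a union from above, which is the useless direction.
\item More structurally, no nonzero weight can satisfy $\int_A\omega^2\,d\mu\le\mu(A)\,h(\mu(A))$ for all small $A$. Indeed $a^{-1}\int_0^a\omega^*(u)\,du$ is non-increasing in $a$, while $h(a)\downarrow 0$. Any proof must therefore use more about $Cap_\mu$ than Lemma \ref{prop:cap-lb}.
\item The symmetrization fallback is not a standard fact: already for the Gaussian measure the extremal sets are half-spaces, not ball exteriors. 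Even granting $Cap_\mu(A)\ge Cap_\mu(\{|x-x_0|>r_a\})$, it does not help. Since $\omega^2$ is radially non-increasing, that exterior set also minimizes $\int\omega^2\,d\mu$ among sets of measure $a$. You would still need $\int_A\omega^2\,d\mu\le Cap_\mu(\{|x-x_0|>r_a\})$, and the only available lower bound for the right side is $a\,h(a)$ again.
\end{itemize}

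Your diagnosis in fact pinpoints the error in the paper's own proof, which takes the rearrangement route you were wary of. The paper asserts $\omega^*(u)=h(u)$ ``for $h$ non-increasing'', and then uses that $h$ is non-decreasing to get $\int_0^a h\le a\,h(a)$. Since $h$ is non-decreasing, $\{\omega^2>t\}=\{s(|x-x_0|)>h^{-1}(t)\}$ and, for $\mu$ with a density, $\omega^*(u)=h(1-u)$. Hence $\int_0^a\omega^*(u)\,du\ge a\,h(1-a)$, and the chain breaks exactly where you located the obstacle.

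Moreover, the gap cannot be closed from \eqref{eqWPdef} alone. Here is a sketch of a counterexample.
\begin{itemize}
\item In $d\ge 2$, start from a heavy-tailed $\mu_0$ (e.g. Cauchy). Fix a small ball $E$ near $x_0$ and lower the density to $\eta$ on a thin shell $S$ surrounding $E$.
\item A test function that is linear across $S$ gives $Cap_\mu(E)\to 0$ as $\eta\to 0$.
\item Yet the optimal $\beta_\mu(1/8)$ stays bounded uniformly in $\eta$. To see this, compare $\Var_\mu(f)$ with the variance of $f$ under $\mu_0$ conditioned on the connected set $(E\cup S)^c$, at the cost of $\mu(E\cup S)\,\Osc^2(f)$.
\item Hence $\int_E\omega^2\,d\mu\ge \mu(E)/(4\beta_\mu(1/8))$ stays bounded below, and \eqref{eq:nu-cap-small} fails for small $\eta$.
\end{itemize}
What is missing is a lower bound on the capacity of sets at moderate distance from $x_0$ that beats $\mu(A)\,h(\mu(A))$. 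This is genuinely geometric (local Poincar\'e-type) information that the weak Poincar\'e inequality does not contain.
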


\begin{proof}
The monotonicity and vanishing at infinity are immediate from the definition and the fact that $s(r)\downarrow0$ whereas $\beta_\mu(u)\uparrow\infty$ as $u\downarrow0$.

To prove \eqref{eq:nu-cap-small}, we compare $\int_A \omega^2\,d\mu$ with $Cap(A)$ via rearrangements.
Let $\omega^*$ denote the non-increasing rearrangement of $\omega^2$ with respect to $\mu$.
A standard calculation shows that for $h$ non-increasing, the choice $\omega^2(x)=h(s(|x-x_0|))$ gives
\begin{equation}\label{eq:rearrangement}
  \omega^*(u)=h(u)\qquad\text{for all }u\in(0,1). 
\end{equation}
Indeed,
$$
\{\omega^2>t\}=\{h(s(|x-x_0|))>t\}=\{s(|x-x_0|)<h^{-1}(t)\}
$$
so that
$$
\mu(\{\omega^2>t\})=h^{-1}(t)
$$
(by definition of the generalized inverse of a monotone map), whence \eqref{eq:rearrangement}.
Therefore, for any $A$ with $\mu(A)=a\le\tfrac12$,
\begin{equation}\label{eq:nuA-via-omega-star}
  \int_A \omega^2\,d\mu \ \le\ \int_0^{a} \omega^*(u)\,du \ =\ \int_0^{a} h(u)\,du
  \ =\ \int_0^{a} \frac{du}{4\,\beta_\mu(u/4)}.
\end{equation}
Since $\beta_\mu$ is non-increasing, $u\mapsto 1/\beta_\mu(u/4)$ is non-decreasing, so that
$$
\int_0^{a} \frac{du}{\beta_\mu(u/4)} \le \frac{a}{\beta_\mu(a/4)}.
$$
Combining this with \eqref{eq:nuA-via-omega-star} and the capacity lower bound \eqref{eq:cap-lower} yields
$$
\int_A \omega^2\,d\mu \le \frac{a}{4\,\beta_\mu(a/4)} \le \, Cap_\mu(A) \, ,
$$ which is \eqref{eq:nu-cap-small}.
\end{proof}

\begin{corollary}\label{corweaktoweight}
Assume that $\mu$ satisfies a weak Poincar\'e inequality \eqref{eqWPdef}, with $\lim_{s\downarrow 0}\beta_\mu(s)=+\infty$. 
Then $\mu$ satisfies a converse weighted Poincar\'e inequality with weight $\omega$ defined by \eqref{eq:omega-explicit}.
\end{corollary}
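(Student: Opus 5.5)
The corollary should follow by chaining three results already stated: the measure--capacity bound of Lemma \ref{prop:cap-lb}, the explicit-weight estimate of Proposition \ref{prop:explicit-weight}, and the capacitary criterion of Lemma \ref{lem:cap-to-weighted}. The plan is to check that $\omega$ defined by \eqref{eq:omega-explicit} meets the hypothesis \eqref{eq:nuCap} of Lemma \ref{lem:cap-to-weighted} with $C=1$, and then read off the conclusion.

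\textbf{Step 1: capacity lower bound.} Since $\mu$ satisfies \eqref{eqWPdef}, Lemma \ref{prop:cap-lb} gives $Cap_\mu(A)\ge a/(4\beta_\mu(a/4))$ for every Borel $A$ with $\mu(A)=a\le \frac12$.

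\textbf{Step 2: the weight satisfies \eqref{eq:nuCap}.} Proposition \ref{prop:explicit-weight} then gives $\int_A\omega^2\,d\mu\le Cap_\mu(A)$ for all such $A$. Two points need checking here.

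\begin{itemize}
\item[(i)] The weight must be well defined, which requires $\beta_\mu$ to be finite at every point $\frac14 s(r)$. This holds when $s(r)>0$. Where $s(r)=0$ (compact support) we set $\omega=0$; we may also take $\beta_\mu(0)=+\infty$, consistent with $\lim_{s\downarrow0}\beta_\mu(s)=+\infty$.
\item[(ii)] The rearrangement identity \eqref{eq:rearrangement} may fail when $s$ has jumps, i.e.\ when spheres carry mass. We only need the inequality $\omega^*(u)\le h(u)$. This holds because $\mu(\omega^2>t)\le h^{-1}(t)$ with the right-continuous generalized inverse, and this inequality is all that \eqref{eq:nuA-via-omega-star} uses.
\end{itemize}

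\textbf{Step 3: apply the capacitary criterion.} With $C=1$, Lemma \ref{lem:cap-to-weighted} yields, for every locally Lipschitz $f$ with median $m$,
$$\int (f-m)^2\,\omega^2\,d\mu\le C'\int|\nabla f|^2\,d\mu,$$
with $C'$ a universal multiple of $C$. In the classical Maz'ya-type argument behind that lemma, one decomposes $(f-m)_+$ along the dyadic level sets $\{f-m\ge 2^k\}$, each of $\mu$-measure at most $\frac12$ because $m$ is a median. This gives $C'\le 8C$ or a similar constant. Taking the infimum over constants gives
$$\inf_c\int(f-c)^2\,\omega^2\,d\mu\le C'\int|\nabla f|^2\,d\mu.$$

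\textbf{Main obstacle: matching the shape of \eqref{eqconvweP}.} The converse weighted inequality \eqref{eqconvweP} is written with the factor $1/\omega^2$ on the left, whereas we have obtained $\omega^2$, a weight that decays at infinity. We therefore interpret the corollary with weight $\tilde\omega=1/\omega$, whose square $\tilde\omega^2=4\beta_\mu\left(\frac14 s(|x-x_0|)\right)$ grows like the rate function. This matches the heavy-tail setting $\tilde\omega\ge1$ of Section \ref{secweiP}, and it is consistent with Theorem \ref{thmWPtoweP}, which goes back from this inequality to a weak Poincar\'e inequality with essentially the same rate.

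The only genuine technical care is in the measure-theoretic details of Step 2; Steps 1 and 3 invoke the lemmas as stated.
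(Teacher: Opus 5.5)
\textbf{The gap is Step 2(ii): the inequality you assert there is false, and the whole route fails with it.} Your chain (Lemma \ref{prop:cap-lb}, then Proposition \ref{prop:explicit-weight}, then Lemma \ref{lem:cap-to-weighted} with $C=1$) is exactly the derivation the paper intends. You are also right that the output carries $\omega^2$, so in the convention of \eqref{eqconvweP} the weight is $1/\omega$.

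Because $\beta_\mu$ is non-increasing, $h(u)=1/(4\beta_\mu(u/4))$ is non-\emph{decreasing}. Hence $\omega^2=h(s(|x-x_0|))$ is largest near $x_0$, and $\{\omega^2>t\}$ is a ball centred at $x_0$. When $|X-x_0|$ has a continuous law, this ball has $\mu$-measure $1-h^{-1}(t)$, not at most $h^{-1}(t)$. So $\omega^*(u)=h(1-u)\ge h(u)$ for $u\le\frac12$, which is the opposite of what you need. (The identity \eqref{eq:rearrangement} cannot hold: a non-decreasing $h$ is not a non-increasing rearrangement unless it is constant.)

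Concretely, take $A=\bar B(x_0,\rho)$ with $\mu(A)=a$. On $A$ we have $\omega^2\ge h(1-a)\ge h(\frac12)$, so $\int_A\omega^2\,d\mu\ge a/(4\beta_\mu(1/8))$. This exceeds $a/(4\beta_\mu(a/4))$ as soon as $\beta_\mu(a/4)>\beta_\mu(1/8)$, i.e.\ for all small $a$.

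The obstruction is structural. Lemma \ref{prop:cap-lb} bounds $Cap_\mu(A)$ through $\mu(A)$ alone. Suppose a weight satisfied $\int_A\omega^2\,d\mu\le C\mu(A)\,h(\mu(A))$ for every $A$ with $\mu(A)\le\frac12$. Then for non-atomic $\mu$, $\omega^*(a)\le\frac1a\int_0^a\omega^*\le Ch(a)\to0$ as $a\downarrow0$, and monotonicity of $\omega^*$ forces $\omega=0$ a.e. So no nontrivial weight can be fed into Lemma \ref{lem:cap-to-weighted} using only that bound. You would need capacity lower bounds that depend on where $A$ sits, for instance local Poincar\'e inequalities on balls or annuli, or a one-dimensional or radial structure where Muckenhoupt-type criteria apply.

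Your point (i) handles $s(r)=0$ but misses the other endpoint. Since \eqref{eqWPdef} is trivial at $s=\frac14$, an admissible rate, in fact the optimal one, may tend to $0$ as $s\uparrow\frac14$, and then $\omega^2$ blows up at $x_0$. For the one-dimensional Cauchy law with its optimal rate, $\beta_\mu(s)\asymp(1-4s)^2$ near $\frac14$. Hence $\omega^2(x)\asymp|x-x_0|^{-2}$ near $x_0$, and $\int(f-m)^2\omega^2\,d\mu=+\infty$ for any smooth compactly supported $f$ with $f(x_0)\neq m$. So the conclusion as stated is false unless the weight is truncated, e.g.\ by replacing $\beta_\mu$ with $\max(\beta_\mu,\beta_\mu(1/8))$.

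Finally, Steps 1 and 3 only make sense with the standard capacity, whose test functions must vanish on a set of measure at least $\frac12$. With \eqref{eq:cap} as written, $\varphi=(1-\mathrm{dist}(x,A)/R)_+$ gives $Cap_\mu(A)\le R^{-2}$ for every bounded $A$. Every bounded set then has zero capacity, and Lemma \ref{prop:cap-lb} cannot hold.
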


\section{Weaker Logarithmic Sobolev inequalities.}\label{secweakLS}

Perturbation of logarithmic Sobolev are studied in \cite{CGPerturb} Theorems 2.5, 2.6 and 2.7. Lyapunov conditions for log-Sobolev (for short) inequalities are introduced in \cite{CGWu} in the more general framework of (weighted) super Poincar\'e inequalities and discussed in \cite{CGhit} (with an erratum on some missing assumptions, one can find on the webpage of the first author). Using the Lyapunov method of \cite{CGWu}, \cite{WW} studies the log-Sobolev inequality for convolution products $\mu*\nu$. Of course if both $\mu$ and $\nu$ satisfy a log-Sobolev inequality, so does $\mu*\nu$ with a log-Sobolev constant less than the sum of the two. \cite{WW} contains more general results, in the spirit of \cite{weakconvol}, in particular the authors show that if $\mu$ satisfies a log-Sobolev inequality and $\nu$ is compactly supported, then $\mu*\nu$ also satisfies a log-Sobolev inequality.

Similarly to weakened Poincar\'e inequalities one may consider weaker logarithmic Sobolev inequalities.

First, the \emph{weak logarithmic Sobolev inequality}, introduced in \cite{CGG}, states that there exists a non-increasing function $\beta_\mu:\mathbb{R}^+\mapsto\mathbb{R}^+$ such that for all $s>0$,
\begin{equation}\label{eqweakLS}
    \int f^2 \, \ln\left(\frac{f^2}{\mu(f^2)}\right) \, d\mu := \Ent_{\mu}(f^2)\leq \beta^{LS}_\mu(s) \int |\nabla f|^2 d\mu + s \, \Osc^2(f) \, .
\end{equation}
As shown in section 4 of \cite{CGG} these inequalities allow to characterize non exponential convergence in relative entropy. The following set of properties hold.
\begin{proposition}\label{propertieswLSI}
    It holds (we skip the superscript $LS$ for simplicity)
    \begin{enumerate}
        \item[(1)] for all $x\in\mathbb{R}^d, \beta_{x+Z} = \beta_Z$,
        \item[(2)] for any $\lambda\in\mathbb{R}, \beta_{\lambda Z} = \lambda^2\beta_Z$,
        \item[(3)] if $Z_1, \dots, Z_n$ are independent, $\beta_{(Z_1, \dots, Z_n)}(s)\leq \max_i\beta_{Z_i}(s/n)$,
        \item[(4)] if $Z_1, \dots, Z_n$ are independent, $\beta_{\sum_i Z_i}(s)\leq \sum_i\beta_{Z_i}(s/n)$,
        \item[(5)] if $\mu_n$ weakly converges to $\mu$, $\beta_{\mu} \leq\liminf\beta_{\mu_n}$.
    \end{enumerate}
\end{proposition}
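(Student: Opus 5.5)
I would mirror the proof of Proposition \ref{proppropweakP}, treating each item separately with entropy in place of variance. For (1) and (2) the argument is a change of variables. If $f$ is a test function for $x+Z$ (resp. $\lambda Z$), set $g(z)=f(x+z)$ (resp. $g(z)=f(\lambda z)$). Then $\Ent(f^2(x+Z))=\Ent(g^2(Z))$ and $\Osc(g)=\Osc(f)$, while $|\nabla g|^2=|\nabla f|^2(x+\cdot)$ (resp. $\lambda^2|\nabla f|^2(\lambda\cdot)$). Applying the inequality for $Z$ to $g$ gives $\beta_{x+Z}\le\beta_Z$ and $\beta_{\lambda Z}\le\lambda^2\beta_Z$, and the reverse inequalities follow by inverting the transformation (for $\lambda\neq 0$; $\lambda=0$ is trivial since the entropy vanishes).

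For (3) I would use the subadditivity (tensorization) of entropy for product measures:
\[
\Ent_{\mu_1\otimes\cdots\otimes\mu_n}(f^2)\le \sum_i \mathbb E\big[\Ent_{\mu_i}(f^2)\big],
\]
where the $i$-th entropy is taken in the $i$-th variable with the others frozen. Apply the weak log-Sobolev inequality of $Z_i$ at level $s/n$ to each frozen slice. The oscillation of a slice is at most $\Osc(f)$, so summing gives
\[
\max_i\beta_{Z_i}(s/n)\int|\nabla f|^2\,d\mu+n\cdot\frac sn\,\Osc^2(f).
\]
This step is routine but needs care: the inequality must be applied slice by slice, with the slice oscillation bounded by the global one.

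For (4), write $f(\sum_i Z_i)=F(Z_1,\dots,Z_n)$ with $F(z_1,\dots,z_n)=f(\sum z_i)$. Then $\Osc(F)=\Osc(f)$ (at most, which suffices) and $|\nabla_{z_i}F|=|\nabla f|(\sum z_i)$. Running the tensorization argument of (3) but keeping the individual constants, the energy term becomes $\sum_i\beta_{Z_i}(s/n)\,\mathbb E|\nabla f|^2(\sum Z_j)$ and the defect term becomes $\sum_i (s/n)\Osc^2(f)=s\,\Osc^2(f)$, which is (4).

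For (5), fix a bounded Lipschitz $C^1$ test function $f$ with bounded gradient, so that $f^2$, $f^2\ln f^2$ and $|\nabla f|^2$ are bounded continuous (use $f^2+\epsilon$ if needed for the logarithm). Write the inequality for $\mu_n$ and let $n\to\infty$ along a subsequence achieving $\liminf\beta_{\mu_n}(s)$. Weak convergence passes each integral to the limit, including $\mu_n(f^2)\ln\mu_n(f^2)$ by continuity, and $\Osc(f)$ does not depend on $n$. A density argument then extends the inequality to all smooth enough $f$.

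I expect the main obstacle to be (5) rather than the others: making the class of admissible test functions precise so that every term is a bounded continuous functional, and handling the approximation near $f=0$ where $f^2\ln f^2$ is only continuous. Note also that if $\liminf\beta_{\mu_n}(s)=\infty$ there is nothing to prove.
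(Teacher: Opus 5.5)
Your proposal is correct and matches the paper's proof. For (3) and (4) the paper likewise applies the subadditivity of entropy slice by slice at level $s/n$, bounding each slice oscillation by $\Osc(f)$; items (1), (2) and (5), which the paper simply calls immediate, are handled correctly by your change-of-variables and weak-limit arguments. One minor point: since $x\mapsto x\ln x$ is already continuous on $[0,\infty)$, the regularization $f^2+\epsilon$ in (5) is unnecessary.
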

The proof is provided in Appendix \ref{appendix_section_5}.

The main fact is that actually, a weak log-Sobolev inequality is equivalent to a weak Poincar\'e inequality. This is Proposition 3.1 in \cite{CGG}.
\begin{proposition}\label{propcgg1}
If $\mu$ satisfies a weak log-Sobolev inequality with rate $\beta_\mu^{LS}$, it satisfies a weak Poincar\'e inequality with 
\begin{equation}\label{eqwlswp}
\beta_\mu(s) \leq \frac{24 \, \beta_\mu^{LS}\left(\frac s2 \, \ln(1+(1/2s)\right)}{\ln (1+(1/2s))} \, ,
\end{equation}
provided $\beta_\mu$ in \eqref{eqwlswp} is non-increasing.

Conversely for some positive constants $c,c',s_0$ and all $s \leq s_0$, 
\begin{equation}\label{eqwpwls}
\beta_\mu^{LS}(s) = c' \, \beta_\mu\left(\frac{cs}{\ln (1/s)}\right) \, \ln(1/s) \, . 
\end{equation}
\end{proposition}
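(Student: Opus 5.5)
We prove the two directions separately. Both rest on the same reduction: it suffices to treat nonnegative $f$, and after subtracting a suitable constant we may compare variance and entropy on the level sets of $f$.

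\textbf{From weak log-Sobolev to weak Poincar\'e.} The plan is to pass through the capacity--measure criterion rather than work directly with functions.

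First, apply the weak log-Sobolev inequality to the test functions $\varphi$ in the definition \eqref{eq:cap} of $Cap_\mu(A)$, where $1_A\le \varphi\le 1$ and $\mu(A)=a\le 1/2$. For such $\varphi$ one has $\Osc(\varphi)\le 1$ and, by the standard lower bound, $\Ent_\mu(\varphi^2)\ge c\, a\ln(1+1/(2a))$. This yields
$$a\ln(1+1/2a)\lesssim \beta^{LS}_\mu(s)\,Cap_\mu(A)+s .$$
Choosing $s=\frac a2\ln(1+1/2a)$ gives
$$Cap_\mu(A)\gtrsim \frac{a\ln(1+1/2a)}{\beta^{LS}_\mu(\frac a2\ln(1+1/2a))}.$$

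Second, feed this bound into the converse of Lemma \ref{prop:cap-lb}, i.e.\ the capacity characterization of weak Poincar\'e inequalities from \cite{BCR2}. That characterization requires a capacity lower bound of the form $a/\beta(a)$ with $\beta$ non-increasing, which is exactly why the statement assumes the right-hand side of \eqref{eqwlswp} is non-increasing. Tracking the numerical constants through the capacity argument (the factor $4$ in Lemma \ref{prop:cap-lb}, the usual factor $2$ between median and mean, and the entropy lower bound) should produce the constant $24$.

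\textbf{From weak Poincar\'e to weak log-Sobolev.} Here we follow the truncation method of \cite{CGG}, using a Rothaus-type step and a dyadic decomposition.

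First, reduce to $f\ge 0$ with median $0$, since $\Ent_\mu(f^2)\le \Ent_\mu((f-m)^2)+2\Var_\mu(f)$ up to constants.

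Next, decompose $f=\sum_k f_k$ along the dyadic levels $2^k$. On each slice, apply the $p$-weak or plain weak Poincar\'e inequality, using the fact that the support of $f_k$ has measure at most $\mu(f\ge 2^{k-1})$.

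Then sum over the slices. The entropy is controlled by $\sum_k \mu(f_k^2)\ln(1/\mu(f\ge 2^k))$. The levels whose measure is above some threshold $\sim s/\ln(1/s)$ contribute at most $\ln(1/s)\,\beta_\mu(cs/\ln(1/s))\int|\nabla f|^2\,d\mu$. The levels whose measure is below the threshold are bounded by $s\,\Osc^2(f)$, because $u\ln(1/u)$ is small there.

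\textbf{Main obstacle.} The delicate step is the bookkeeping in the last stage: the $\ln(1/\mu(\cdot))$ weights must be balanced against the rate $\beta_\mu$ evaluated at the varying measures of the level sets. Monotonicity of $\beta_\mu$ is what allows all of them to be replaced by a single value at the threshold $cs/\ln(1/s)$.
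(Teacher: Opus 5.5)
Your first direction follows the route behind the paper's citation of \cite{CGG}. You test the weak log-Sobolev inequality on capacity test functions, take $s$ equal to half the entropy lower bound, and conclude with the converse capacity theorem of \cite{BCR2} (Theorem 2.2), which is exactly where the monotonicity hypothesis is needed.

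One step, however, fails as written. The bound $\Ent_\mu(\varphi^2)\ge c\,a\ln(1+1/(2a))$ is false for an arbitrary $\varphi$ with $\mathbf 1_A\le\varphi\le 1$ as in \eqref{eq:cap}: a test function equal to $1$ on most of the mass has entropy close to $0$. You need the capacity used in \cite{BCR2}, where $\mathbf 1_A\le\varphi\le\mathbf 1_\Omega$ for some $\Omega\supset A$ with $\mu(\Omega)\le 1/2$; this is in any case the notion the converse theorem requires. With that constraint, insert the following into $\Ent_\mu(g)=\sup\{\int gh\,d\mu:\int e^h d\mu\le 1\}$: $h=\ln(1+1/(2a))$ on $A$, $h=0$ on $\Omega\setminus A$, and $h\to-\infty$ off $\Omega$. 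This gives the bound with $c=1$, which is where the $\ln(1+1/(2s))$ in \eqref{eqwlswp} comes from. Your choice of $s$ then yields $Cap_\mu(A)\ge a/\tilde\beta(a)$ with $\tilde\beta(a)=2\beta^{LS}_\mu(\frac a2\ln(1+\frac1{2a}))/\ln(1+\frac1{2a})$. The factor $2$ here comes from halving $s$, and the remaining numerical factor in $24$ is the constant of the converse theorem in \cite{BCR2}. Your constant bookkeeping is therefore off: neither the factor $4$ of Lemma \ref{prop:cap-lb} nor any median/mean comparison plays a role in this direction.

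For the converse, the paper gives nothing beyond the citation. Your dyadic slicing is the capacity argument unrolled: Lemma \ref{prop:cap-lb} on the level sets $\{f>2^k\}$, then a summation as in the proof of Lemma \ref{lem:cap-to-weighted}, with the $\ln(1+1/\mu(f>2^k))$ weights coming from the same variational formula. It does yield \eqref{eqwpwls}. You should, however, make the treatment of the oscillation terms on the heavy slices explicit; there are two options.
\begin{enumerate}
\item[(i)] Take the parameter on the $k$-th slice of order $\mu(f>2^k)$, so these terms are absorbed into the left-hand side. Monotonicity then lets you replace $\beta_\mu(\mu(f>2^k)/4)$ by its value at the threshold $t\approx s/\ln(1/s)$.
\item[(ii)] Use the single value $t$ on every slice, and observe that the oscillation terms sum to $O(t\ln(1/t))\,\Osc^2(f)$, the same order as the contribution of the light slices.
\end{enumerate}
The $p$-weak inequality is not needed.
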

The constants $c,c',s_0$ are universal and can be traced in the proofs of \cite{CGG} if necessary. Notice that the previous result is almost closed. Indeed, if we plug the formula \eqref{eqwpwls} for $\beta_\mu^{LS}$ in equation \eqref{eqwlswp}, we get something like $\beta_\mu(s) \leq C \, \beta_\mu( Cs)$, i.e. up to some constants, we are not loosing a factor for $\beta_\mu$. 

The fact that $\beta_\mu$ in \eqref{eqwlswp} is non-increasing is crucial (and not explicitly stated in \cite{CGG}, but necessary in order to apply Theorem 2.2 in \cite{BCR2}). Indeed if $\beta_\mu^{LS}=C_{LS}$ is constant, the result does not apply (otherwise $C_P(\mu)=0$). Except this important restriction, if one of $\beta_\mu$ or $\beta_\mu^{LS}$ is optimal, so is the other given by Proposition \ref{propcgg1}.

The previous Proposition indicates that we cannot expect to obtain really new results, using weak log-Sobolev instead of weak Poincar\'e. For instance, if the Lyapunov condition \eqref{eqweaklyap} in Theorem \ref{lyapweakP} is satisfied, $\mu$ will satisfy some weak log-Sobolev inequality and get a rate function. If we are able to give a direct proof of this fact, all the direct proofs we know do not furnish better rates than the one obtained by transferring the one for the weak Poincar\'e inequality.

The second family of weakened logarithmic Sobolev inequalities is the
family of \emph{weighted logarithmic Sobolev} inequalities.
We say that $\mu$ satisfies a weighted logarithmic Sobolev inequality with weight $\omega$, $\omega$ being a non-negative function defined on $\mathbb{R}^d$ if there exists a constant $C_{LS,\omega}(\mu)$ such that for all nice function $f$
\begin{equation}\label{eqweightLS}
    \Ent_\mu(f^2)\le C_{LS, \omega}(\mu)\int \vert \nabla f\vert^2\omega^2 d\mu.
\end{equation}

It seems that these inequalities appeared in \cite{BLweight} in the study of generalized Cauchy measures, and then in \cite{BLweight2} for more general measures. A systematic study is done in \cite{CGWu}. We shall recall some of the results therein below.

The weighted logarithmic Sobolev inequality satisfies similar properties to the weighted Poincar\'e one.
\begin{proposition}\label{propweightedLSIproperties}
    It holds
    \begin{enumerate}
        \item[(1)] for all $x\in\mathbb{R}^d, C_{LS,\omega}(x+Z) = C_{LS,\tilde\omega}(Z)$, where $\tilde \omega(z) = \omega(z-x)$.
        \item[(2)] for any $\lambda\in\mathbb{R}, C_{LS,\omega}(\lambda Z) = \lambda^2C_{LS,\tilde\omega}(Z)$, where $\tilde \omega(z) = \omega(z/\lambda)$.
        \item [(3)] for any map $T:\mathbb{R}^d\to\mathbb{R}^d$ which is $L$-Lipschitz and invertible, $C_{LS,\omega}(T(Z)) = L^2C_{LS,\tilde\omega}(Z)$, where $\tilde \omega(z) = \omega(T^{-1}(z))$.
        \item[(4)] if $Z_1, \dots, Z_n$ are independent and $P_i$ is the projection on the $i$-th coordinate, then $\Ent\left(f^2\right)\leq \max_iC_{LS,\omega_i}(Z_i)\mathbb{E}\left[\sum_i\left\vert \nabla_{Z_i} f\left( Z_1,\dots, Z_n\right)\right\vert^2\omega_i^2\circ P_i(Z_1, \dots, Z_n)\right]$.
        \item[(5)] if $Z_1, \dots, Z_n$ are independent, $\Ent\left(f^2\left(\sum Z_i\right)\right)\leq\mathbb{E}\left[\left\vert \nabla f\left(\sum Z_i\right)\right\vert^2\sum_{i} C_{LS, \omega_i}(Z_i)\omega_i^2(Z_i)\right]$.
    \end{enumerate}
\end{proposition}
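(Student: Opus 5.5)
The five items are handled one at a time, in the same way as for Proposition \ref{propweightedPIproperties}. The only new ingredient is the behaviour of entropy under change of variables and under conditioning.

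\textbf{Items (1)--(3).} Each is a change of variables. If $Y=T(Z)$ with $T$ invertible and $L$-Lipschitz, put $g=f\circ T$. Then $\Ent_{Y}(f^2)=\Ent_Z(g^2)$, since entropy only depends on the law of $f(Y)$. Applying the weighted log-Sobolev inequality for $Z$ with weight $\tilde\omega=\omega\circ T$ gives
$$\Ent_Y(f^2)\le C_{LS,\tilde\omega}(Z)\,\mathbb E\big[|\nabla g(Z)|^2\tilde\omega^2(Z)\big].$$
The chain rule gives $|\nabla g(z)|\le L\,|\nabla f(T z)|$, so the right-hand side is at most $L^2 C_{LS,\tilde\omega}(Z)\,\mathbb E[|\nabla f(Y)|^2\omega^2(Y)]$. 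This proves the inequality "$\le$"; for an isometric translation or a dilation it is an equality, by applying the argument to $T^{-1}$. Translations ($T(z)=z+x$, $L=1$) and dilations ($T(z)=\lambda z$, $L=|\lambda|$) are then special cases. As in the Poincar\'e case, the "$=$" in (3) should be read as "$\le$" unless $T^{-1}$ is also suitably Lipschitz.

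\textbf{Item (4).} This is tensorization, which is the main step. I would use the subadditivity of entropy for product measures:
$$\Ent_{\mu_1\otimes\cdots\otimes\mu_n}(f^2)\le \sum_i \mathbb E\big[\Ent_{\mu_i}(f^2 \text{ as a function of } z_i)\big],$$
where the other coordinates are frozen. This follows from the variational formula $\Ent(F)=\sup\{\mathbb E[FG]:\mathbb E e^G\le1\}$, or by the standard induction via conditional entropies (the chain rule for entropy plus Jensen). On each coordinate I then apply the weighted log-Sobolev inequality for $Z_i$ with weight $\omega_i$. Bounding each constant $C_{LS,\omega_i}(Z_i)$ by the maximum and integrating back yields (4). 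The delicate point is the subadditivity itself. It is classical, and I would cite it from \cite{BakryGentilLedoux2014} rather than reprove it.

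\textbf{Item (5).} This follows from (4). Apply (4) to $F(z_1,\dots,z_n)=f(\sum z_i)$, for which $\nabla_{z_i}F=\nabla f(\sum z_i)$. Keeping each constant $C_{LS,\omega_i}(Z_i)$ inside the sum, instead of bounding it by the maximum, gives
$$\Ent\big(f^2(\textstyle\sum Z_i)\big)\le \mathbb E\Big[|\nabla f(\textstyle\sum Z_i)|^2\sum_i C_{LS,\omega_i}(Z_i)\,\omega_i^2(Z_i)\Big].$$
The proof of (4) already gives this refined per-coordinate form, so no extra work is needed.
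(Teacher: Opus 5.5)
Your proposal is correct and follows the paper's route: the paper also obtains (4)--(5) from the subadditivity of entropy followed by the weighted log-Sobolev inequality in each coordinate (keeping the individual constants $C_{LS,\omega_i}(Z_i)$ for (5)), and treats (1)--(3) as immediate changes of variables, exactly as in Proposition \ref{propweightedPIproperties}. You should flag explicitly that your weight $\tilde\omega=\omega\circ T$ is the correct one, whereas the statement writes $\omega\circ T^{-1}$; correspondingly, $\omega(z-x)$ and $\omega(z/\lambda)$ in (1)--(2) should read $\omega(z+x)$ and $\omega(\lambda z)$, so (1)--(2) are special cases of your argument only after this correction, just as you rightly noted that the ``$=$'' in (3) is in general only ``$\le$''.
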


\begin{proof}
    The proof follows from the well known subadditivity property of the entropy (see e.g.Lemma 2.3.6 in \cite{Chewi}) and  Proposition \ref{propweightedPIproperties}.
\end{proof}

As for the Poincar\'e inequalities, a weighted logarithmic Sobolev inequality implies a weak logarithmic Sobolev inequality (Theorem 3.3 in \cite{CGWu}) as well as several other inequalities. \cite{CGWu} also introduces a Lyapunov type condition (see Theorem 2.1 therein) yielding explicit weights for the example of generalized Cauchy distributions.

\begin{example}\label{examplscauchy}
Let $\mu_\alpha(dx)= z_\alpha^{-1} \, (1+|x|^2)^{-(d+\alpha)/2}$ with $\alpha >0$. 

It is shown in \cite{CGWu} Corollary 2.9 that $\mu_\alpha$ satisfies a weighted logarithmic Sobolev inequality with optimal weight $$\omega_\alpha^2(x) = (1+|x|^2) \, \ln(e+|x|^2) \, ,$$ and some non explicit constant $C_{LS,\omega}(\mu_\alpha)$. Non explicit means that its value does not appear in \cite{CGWu} but can (with some difficulty) be traced in the proof of the Corollary, applied to the explicit case of $\mu_\alpha$. Actually the result is more general and applies to $\kappa$-concave distributions (see the formulation and the proof in subsection 2.3.1 of \cite{CGWu}). This proof is based on the ad-hoc Lyapunov criterion.

In the specific situation of $\mu_\alpha$, the very recent \cite{HugLS} uses a different approach based on $\Gamma_2$ calculus and recovers a slightly modified (but asymptotically equivalent)  weight and explicit constants, but apparently for a restricted range of $\alpha$'s (see Theorem 4.3 and Corollary 4.4 therein).

All these results improve on the original worse weight $\omega^2(x)=(1+|x|^2)^2$ in \cite{BLweight}.
\hfill $\diamondsuit$
\end{example}

We shall not discuss the case of Subbotin distributions for $\alpha <1$ only give some hints. First choosing $F(x)=e^{\gamma |x|^\alpha}$ for $\gamma$ small enough one can check that $F$ is a Lyapunov function in the sense of (2.1) in \cite{CGWu} (see Lemma 3.8 in \cite{CGGR}), with $\phi(u)=u\ln^{2(\alpha-1)/\alpha}(c+u)$ for some $c$. The weight can then be deduced from Proposition 2.6 and Theorem 2.1 in \cite{CGWu} following the lines of the proof for Cauchy distributions.

Similarly to Proposition \ref{propWeightPperturb} we now analyze how the weighted logarithmic Sobolev inequality behaves under perturbation.

\begin{proposition}\label{propWeightLSIperturb}
Let $\mu(dx)= e^{-V(x)}dx=e^{-(U+W)(x)} dx$ for smooth $U$ and $W$ be a probability measure. Denote $\nu(dx)=e^{-W(x)}dx$ supposed to be a probability measure too that satisfies a weighted logarithmic Sobolev inequality with weight $\omega$. 
\begin{enumerate}
     \item (Bounded perturbation) Assume that $U(x) \geq m_U$ (in other words $e^{-U}$ is bounded, notice that $m_U\leq 0$).

Then $$C_{LS, \tilde\omega}(\mu)\leq e^{-m_U} C_{LS, \omega}(\nu)\quad\textrm{with} \quad\tilde\omega=\omega e^{U/2} \, .$$
Furthermore, if $U$ is bounded, then $\mu$ satisfies a weighted logarithmic Sobolev inequality with constant $$C_{LS, \omega}(\mu)\leq e^{\Osc(U)} C_{LS, \omega}(\nu)\, .$$
\item (Unbounded perturbation)
\begin{enumerate}
    \item (Weighted Lipschitz case) Suppose that there exist $\varepsilon, \varepsilon'>0$ such that
    \begin{align*}
    s&=\sup_x C_{P, \omega}(\nu)\frac{1+\varepsilon}{4}|\nabla U|^2\omega^2<1\\
        \beta&=\sup_x C_{LS, \omega}(\nu)\frac{1+\varepsilon'}{4}|\nabla U|^2\omega^2<\infty
    \end{align*}
    then $\mu$ satisfies a weighted logarithmic Sobolev inequality with weight $\omega$ and constant
    $$C_{LS,\omega}(\mu)\le \frac{\alpha}{\alpha-1} \left[(1+\varepsilon^{-1}) C_{LS, \omega}(\nu)+\left(2+\beta + \frac{\int e^{\alpha U} d\mu}{\alpha}\right)\frac{(1+\varepsilon'^{-1})C_{P,\omega}(\nu)}{1-s}\right]$$
    for any $\alpha>1$.
    Recall that we have seen that a weighted logarithmic Sobolev inequality implies a weighted Poincar\'e inequality with the same weight.

    Alternatively, assuming that there exists $\varepsilon$ such that $s$ defined above satisfies $s<1$, then $\mu$ satisfies a weighted logarithmic Sobolev inequality with weight $\tilde\omega = \omega e^{U^+/2}/(\int e^{U^-}d\nu)^{1/2}$ and constant
    $$
    C_{LS, \tilde\omega}(\mu) \leq \left(\int e^{U^-}d\nu\right)\frac{1+\varepsilon^{-1}}{1-s}((2-s)C_{LS, \omega}(\nu) + (2 + M_U)C_{P, \omega}(\nu)), 
    $$
    where $U^+$ and $U^-$ denote the positive and negative part of $U$, respectively.
    \item (Weighted generator case) Let us introduce
    $$L_{U+W}^{\omega}f = \omega^2 \Delta f +(\nabla \omega^2-\omega^2 \nabla (U+ W)) .\nabla f .$$
    Suppose that
    \begin{align*}
    \beta &=\sup_x \frac{1}{2}C_{LS,\omega}(\nu)\left(\frac12|\nabla U|\omega^2+L^{\omega,\mu}U\right)_+<\infty\\
    s&=\sup_x \frac{1}{2}C_{P,\omega}(\nu)\left(\frac12|\nabla U|\omega^2+L^{\omega,\mu}U\right)_+<1  
    \end{align*}
    then $\mu$ satisfies a weighted logarithmic Sobolev inequality with weight $\omega$ and constant
    $$C_{LS,\omega}(\mu)\le\frac{\alpha}{\alpha-1}\left[ C_{LS,\omega}(\nu) + \left(2+\beta + \frac{\int e^{\alpha U} d\mu}{\alpha}\right)\frac{C_{P,\omega}(\nu)}{1-s}\right]$$
    \item (Weighted Lyapunov condition) Let us suppose that there exists $F\ge 1$, some point $x_0$ and constants $b,\theta>0$ such that
     \begin{equation}\label{lyapw_LSI}
    L_W^\omega F(x)\le -\theta F(x) +b 1_{|x|\le R}
    \end{equation}
    and there exists $\theta'<\theta$ such that
    $$-\omega^2\nabla U.\nabla F\le \theta' F.$$
    Besides assume that $\mu$ satisfies some local super Poincar\'e inequality, 
    then $\mu$ satisfies a weighted logarithmic Sobolev inequality with weight $\omega$.
\end{enumerate}
\end{enumerate}
\end{proposition}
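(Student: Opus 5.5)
The four parts need separate arguments. The common thread is to reduce to the weighted Poincar\'e perturbation results of Proposition \ref{propWeightPperturb}, combined with standard entropy tools. These are the variational formula $\Ent_\mu(f^2)=\inf_{t>0}\int (f^2\ln f^2 - f^2\ln t - f^2+t)\,d\mu$ with a non-negative integrand, and the Rothaus-type inequality $\Ent_\mu(f^2)\le \Ent_\mu(\tilde f^2)+2\Var_\mu(f)$ with $\tilde f=f-\mu(f)$.

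\textbf{Bounded perturbations.} I would mimic the Holley--Stroock argument. Take the infimum in the variational formula at $t=\nu(f^2)$. Since the integrand is non-negative and $e^{-U}\le e^{-m_U}$, we get
\[
\Ent_\mu(f^2)\le \int(\cdots)e^{-U}\,d\nu\le e^{-m_U}\Ent_\nu(f^2)\le e^{-m_U}C_{LS,\omega}(\nu)\int|\nabla f|^2\omega^2\,d\nu .
\]
Writing $d\nu=e^{U}d\mu$ then gives the weight $\tilde\omega^2=\omega^2e^{U}$. If $U$ is bounded, bound $e^{U}\le e^{\sup U}$ instead; the two exponentials combine into $e^{\Osc U}$.

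\textbf{Weighted Lipschitz case.} I would first apply part (2)(a) of Proposition \ref{propWeightPperturb} to get $C_{P,\omega}(\mu)\le (1+\varepsilon^{-1})C_{P,\omega}(\nu)/(1-s)$. For the entropy, set $g=fe^{-U/2}$, so that $\int g^2\,d\nu=\int f^2\,d\mu$. Then $\Ent_\mu(f^2)=\Ent_\nu(g^2)+\int f^2\,U\,d\mu$ up to the normalisation term. Apply the weighted LSI of $\nu$ to $g$, using $|\nabla g|^2\le(1+\varepsilon'^{-1})|\nabla f|^2e^{-U}+(1+\varepsilon')\tfrac14|\nabla U|^2f^2e^{-U}$. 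The second piece is bounded by $\beta\int f^2\,d\mu/C_{LS,\omega}(\nu)$ thanks to the hypothesis on $|\nabla U|^2\omega^2$.

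The remaining term $\int f^2\,U\,d\mu$ is handled by the entropy duality inequality $\int f^2 U\,d\mu\le \tfrac1\alpha\Ent_\mu(f^2)+\tfrac1\alpha\mu(f^2)\ln\int e^{\alpha U}d\mu$. The part $\tfrac1\alpha\Ent_\mu(f^2)$ is absorbed into the left-hand side, which produces the factor $\alpha/(\alpha-1)$. I would perform all of this on $\tilde f=f-\mu(f)$, using the Rothaus inequality, so that every $\mu(\tilde f^2)$ becomes $\Var_\mu(f)$ and is then controlled by the weighted Poincar\'e inequality. This yields the stated constant, with $\int e^{\alpha U}d\mu/\alpha$ bounding $\ln$ of the same quantity.

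For the alternative weight, I would instead use bounded-perturbation-style comparisons with $e^{U^{\pm}}$ and normalise by $\int e^{U^-}d\nu$. Here $M_U$ denotes the sup-type quantity for $U$ that arises from bounding $\int f^2U^+\,d\mu$.

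\textbf{Weighted generator case.} The same scheme applies. The only change is that, after integrating by parts with respect to the $L^\omega$-symmetric form, $\int|\nabla U|^2\omega^2 f^2$ is replaced by $\int f^2(\tfrac12|\nabla U|^2\omega^2+L^\omega U)$, exactly as in the proof of part (2)(b) of Proposition \ref{propWeightPperturb}. The Poincar\'e input is then $C_{P,\omega}(\nu)/(1-s)$.

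\textbf{Lyapunov case.} The first step is to note that the condition $-\omega^2\nabla U\cdot\nabla F\le\theta'F$ makes $F$ a Lyapunov function for $L^\omega_{U+W}$ with rate $\theta-\theta'$. I would then invoke the weighted super Poincar\'e / Lyapunov criterion of \cite{CGWu} (Theorem 2.1 together with the local super Poincar\'e assumption) to conclude the weighted log-Sobolev inequality.

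The main obstacle is step (a): the Rothaus/centering step together with the absorption of $\int f^2U$. These must be arranged so that every lower-order term is a variance, which is what makes the weighted Poincar\'e constant of $\mu$ appear.
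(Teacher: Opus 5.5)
Parts (1), the first statement of (2)(a), and (2)(b) follow the paper's proof step for step: the variational formula at $t=\nu(f^2)$, the identity $\Ent_\mu(f^2)=\Ent_\nu((fe^{-U/2})^2)+\int f^2U\,d\mu$, entropy duality and absorption giving $\alpha/(\alpha-1)$, then centering, Rothaus' lemma and Proposition~\ref{propWeightPperturb}. Your pairing of $\varepsilon'$ with $C_{LS,\omega}(\nu)$ and of $\varepsilon$ with $C_{P,\omega}(\nu)/(1-s)$ is the consistent one; the displayed constant has them swapped.

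\textbf{The gap: the alternative statement in (2)(a).} You take $M_U$ to be a supremum coming from bounding $\int f^2U^+\,d\mu$. But $U^+$ is not assumed bounded, so this $M_U$ is $+\infty$ as soon as $U$ is unbounded above. If $U^+$ were bounded, $\omega e^{U^+/2}$ would be comparable to $\omega$ and the alternative would be pointless. The purpose of that statement is to avoid both $\int e^{\alpha U}d\mu$ and any control of $U^+$, by moving $U^+$ into the weight. The missing idea is to factor through the intermediate probability measure
\[
d\eta=\frac{e^{U^-}}{\int e^{U^-}d\nu}\,d\nu=e^{-\bar U}\,d\nu,\qquad \bar U=M_U-U^-,\qquad M_U:=\ln\int e^{U^-}d\nu .
\]
The steps are then:
\begin{enumerate}
\item Since $\bar U\le M_U$, the term $\int f^2\bar U\,d\eta$ in $\Ent_\eta(f^2)=\Ent_\nu((fe^{-\bar U/2})^2)+\int f^2\bar U\,d\eta$ is simply bounded by $M_U\int f^2d\eta$. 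No duality step is needed.
\item Since $|\nabla\bar U|=|\nabla U^-|\le|\nabla U|$, the same $s$ works. Centering, Rothaus and Proposition~\ref{propWeightPperturb}(2)(a) then give $C_{LS,\omega}(\eta)\le\frac{1+\varepsilon^{-1}}{1-s}((2-s)C_{LS,\omega}(\nu)+(2+M_U)C_{P,\omega}(\nu))$.
\item Finally $d\mu=e^{-(U^+-M_U)}d\eta$ with $U^+-M_U\ge -M_U$. Part (1) produces exactly the weight $\omega e^{(U^+-M_U)/2}$ and the factor $e^{M_U}=\int e^{U^-}d\nu$.
\end{enumerate}
This $M_U$ is harmless: $\int e^{U^-}d\nu=\nu(U\ge0)+\mu(U<0)$, so $M_U\in[0,\ln 2]$.

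\textbf{A caution on (c).} Your argument there reproduces the paper's own sketch: $F$ stays Lyapunov for $L^\omega_{U+W}$ with rate $\theta-\theta'$, then one appeals to \cite{CGWu}. That step is not sound as stated, so you should not rely on it. A constant-rate Lyapunov condition for $L^\omega$ is what yields the weighted Poincar\'e inequality of Proposition~\ref{propWeightPperturb}(2)(c). It cannot give a weighted log-Sobolev inequality with the same weight, as this example shows:
\begin{itemize}
\item Take $\nu=\mu_\alpha$ with $\alpha>2$, $U=0$ and $\omega^2=F=1+|x|^2$. Then $L^\omega_WF=2d-2(\alpha-2)|x|^2$, so every hypothesis of (c) holds.
\item Take $g=\frac12\ln(1+|x|^2)$, which satisfies $|\nabla g|\,\omega\le 1$. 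Herbst's argument would then give $\int(1+|x|^2)^{\lambda/2}d\mu_\alpha<\infty$ for every $\lambda$, which is false for $\lambda\ge\alpha$.
\end{itemize}
This agrees with the optimal weight $(1+|x|^2)\ln(e+|x|^2)$ in Example~\ref{examplscauchy}. A correct version of (c) needs a Lyapunov rate growing at infinity, or a larger weight.
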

The proof is provided in Appendix \ref{appendix_section_5}.

\newpage

\appendix

\section{Proofs of Section 2}\label{appendix_section_2}

\textbf{Proof of Proposition \ref{proppropweakP}}
\begin{proof}
(1), (2) and (5) are immediate, (3) is easy and shown in Theorem 5 of \cite{BCR2}. For (4) simply write
\begin{eqnarray*}
\Var (f(Z_1+Z_2)) &=& \int f^2(z_1+z_2) \, \mu_{Z_1}(dz_1) \, \mu_{Z_2}(dz_2) - \mathbb E^2(f(Z_1+Z_2)) \\ &=&  \int \left(\Var (f(Z_1+z_2)) + \mathbb E^2(f(Z_1+z_2))\right) \mu_{Z_2}(dz_2) - \mathbb E^2(f(Z_1+Z_2))\\&\leq& \int \left(\beta_{Z_1}(s) \, \int |\nabla f(z_1+z_2)|^2 \mu_{Z_1}(dz_1) + s \, \Osc^2(f)\right) \mu_{Z_2}(dz_2) \\ && \quad + \int \mathbb E^2(f(Z_1+z_2)) \, \mu_{Z_2}(dz_2) \, - \, \mathbb E^2(f(Z_1+Z_2))\\ &\leq& \beta_{Z_1}(s) \, \mathbb E[|\nabla f|^2(Z_1+Z_2)] + s \Osc^2(f)  \\ && \quad + \beta_{Z_2}(s) \, \int \, |\nabla \, \mathbb E(f(Z_1+.))|^2 \mu_{Z_2}(dz_2) + s \, \Osc^2(\mathbb E(f(Z_1+z_2)))
\end{eqnarray*}
and the result follows, first assuming that everything is regular enough for exchanging $\nabla$ and $\mathbb E$, then using Cauchy-Schwarz inequality and finally using that $\Osc(\mathbb E(f(Z_1+.))) \leq \Osc f$. One can then use an approximation procedure and (5) in order to conclude.
\end{proof}

\noindent \textbf{Proof of Lemma \ref{lemWPp}}
\begin{proof}
Let $f$ such that $\mu(f)=0$. For $M>0$, it holds 
\begin{eqnarray*}
\mu(f^2) &=& \int f^2 \, \mathbf 1_{|f|\leq M} \, d\mu \, + \, \int f^2 \, \mathbf 1_{|f|> M} \, d\mu \\ &\leq& \int |f\wedge M\vee -M|^2  \, d\mu \, + \, \int f^2 \, \mathbf 1_{|f|> M} \, d\mu \\ &\leq& \beta_\mu(s) \, \int |\nabla f|^2 d\mu + 4sM^2 + \left(\int \, f\wedge M\vee -M \, d\mu\right)^2 + \, \int f^2 \, \mathbf 1_{|f|> M} \, d\mu \\ &\leq& \beta_\mu(s) \, \int |\nabla f|^2 d\mu + 4sM^2 + \left(\int \, |f| \, \mathbf 1_{|f|> M} \, d\mu\right)^2 + \, \int f^2 \, \mathbf 1_{|f|> M} \, d\mu \\&\leq& \beta_\mu(s) \, \int |\nabla f|^2 d\mu + 4sM^2 + 2 \; \frac{||f||^{p}_{\mathbb L^p(\mu)}}{M^{p-2}}
\end{eqnarray*}
where we used Cauchy-Schwarz and H\"{o}lder inequalities. Choosing $M$ such that the two last terms are equal (which is optimal up to a factor $2$), i.e. $M=||f||_{\mathbb L^p(\mu)}/(2s)^{1/p}$ one thus obtain $$\Var_\mu(f) \leq \beta_\mu(s) \, \int |\nabla f|^2 d\mu + 2^{{3}-(2/p)} s^{1-(2/p)} \, ||f - \mu(f)||^2_{\mathbb L^p(\mu)} \, ,$$ hence the result.
\end{proof}

\newpage
\section{Proofs of Section 3}\label{appendix_section_3}
\noindent \textbf{Proof of Proposition \ref{propweightedPIproperties}}
\begin{proof}
    (1), (2) and (3) are immediate. For (4) and (5), one uses the following well known decomposition  result: if $Z_1, \dots, Z_n$ are independent random variables
    \begin{equation}\label{eq:subadditivity_variance}
        \Var(f(Z_1, \dots, Z_n))\leq \mathbb{E}\sum_k\Var(f(Z_1, \dots, Z_n)\vert Z_{-k}).
    \end{equation}
Using this, for (4) it follows
\begin{align*}
    \Var\left(f\left(Z_1, \dots, Z_n\right)\right)&\leq \mathbb{E}\sum_i\Var\left(f\left(Z_1, \dots, Z_n\right)\vert Z_{-i}\right)\\
    &\leq \mathbb{E}\sum_i C_{P, \omega_i}(Z_i)\mathbb{E}_{Z_i}\left[\left\vert \nabla_{Z_i} f\left( Z_1, \dots, Z_n\right)\right\vert^2\omega_i^2(Z_i)\Big\vert Z_{-i}\right]\\
    &\leq \max_iC_{P,\omega_i}(Z_i)\mathbb{E}_{( Z_1, \dots, Z_n)}\left[\sum_i\left\vert \nabla_{Z_i} f\left( Z_1,\dots, Z_n\right)\right\vert^2\omega_i^2\circ P_i(Z_1, \dots, Z_n)\right],
\end{align*}
where $P_i$ denotes the projection on the $i$-th coordinate.
On the other hand, for the convolution it follows that
\begin{align*}
    \Var\left(f\left(\sum Z_i\right)\right)&\leq \mathbb{E}\sum_i\Var\left(f\left(\sum Z_i\right)\vert Z_{-i}\right)\\
    &\leq \mathbb{E}\sum_i C_{P, \omega_i}(Z_i)\mathbb{E}_{Z_i}\left[\left\vert \nabla f\left(\sum Z_i\right)\right\vert^2\omega_i^2(Z_i)\vert Z_{-i}\right] \\
    &=\mathbb{E}\left[\left\vert \nabla f\left(\sum Z_i\right)\right\vert^2\sum_{i} C_{P, \omega_i}(Z_i)\omega_i^2(Z_i)\right] .
\end{align*}
\end{proof}

\noindent \textbf{Proof of Proposition \ref{propWeightPperturb}}
\begin{proof}
(1) In the first case, we have
    \begin{align*}
        \Var_\mu(f)&\leq \int(f-\mathbb{E}_\nu[f])^2 d\mu\leq e^{-m_U}\Var_\nu(f)\leq e^{-m_U}C_{P, \omega}(\nu)\int\vert\nabla f\vert^2\omega^2 d\nu \\&=e^{-m_U}C_{P, \omega}(\nu)\int\vert\nabla f\vert^2(\omega e^{U/2})^2 d\mu\, .
    \end{align*}
    On the other hand, when $U$ is bounded
        \begin{align*}
        \Var_\mu(f)&\leq e^{-m_U}C_{P, \omega}(\nu)\int\vert\nabla f\vert^2\omega^2 d\nu \leq e^{\Osc(U)}C_{P, \omega}(\nu)\int\vert\nabla f\vert^2\omega ^2 d\mu.
    \end{align*}
    
(2) (a) For all $a$ we have that $\Var_\mu(f)\le \int (f-a)^2d\mu$. We thus consider $a$ such that $\int (f-a)e^{-U/2}d\nu=0$, so that, by applying the weighted Poincar\'e inequality for $\nu$ with weight $\omega$, we get
\begin{eqnarray*}
    \int(fe^{-U/2}-ae^{-U/2})^2d\nu &\le&C_{P,\omega}(\nu) \int\left\vert\nabla \left((f-a)e^{-U/2}\right)\right\vert^2\omega^2d\nu\\
    &\le& C_{P,\omega}(\nu)(1+\varepsilon^{-1})\int|\nabla f|^2\omega^2d\mu\\&&\qquad + C_{P,\omega}(\nu)\frac{1+\varepsilon}{4}\int(f-a)^2|\nabla U|^2\omega^2d\mu\\
    &\le&C_{P,\omega}(\nu)(1+\varepsilon^{-1})\int|\nabla f|^2\omega^2d\mu+s\int(f-a)^2d\mu
\end{eqnarray*}
which leads to the conclusion if $s<1$.\\

(b) The starting point is the same except that we will exactly develop the square and use the weighted operator $L^{\omega, \mu}$ to deal with the middle term. First recall that for all nice function $f,g$
$$\int \, \langle\nabla f,\nabla g\rangle \, \omega^2 \, d\mu=-\int fL_{U+W}^{\omega}g\, d\mu.$$
We may then deduce
\begin{eqnarray*}
    \int(fe^{-U/2}-ae^{-U/2})^2d\nu &\le&C_{P,\omega}(\nu) \int|\nabla f- \frac12(f-a)\nabla U|^2\omega^2d\mu\\
    &=& C_{P,\omega}(\nu)\int|\nabla f|^2\omega^2d\mu+ \frac14C_{P,\omega}(\nu)\int(f-a)^2|\nabla U|^2\omega^2d\mu\\
    &&\qquad-\frac12C_{P,\omega}(\nu)\int \nabla (f-a)^2.\nabla U \omega^2d\mu\\
    &=&C_{P,\omega}(\nu)\int|\nabla f|^2\omega^2d\mu+ \frac14C_{P,\omega}(\nu)\int(f-a)^2|\nabla U|^2\omega^2d\mu\\
    &&\qquad+\frac12C_{P,\omega}(\nu)\int (f-a)^2 L_{U+W}^{\omega}U d\mu\\
    &\le& C_{P,\omega}(\nu)\int|\nabla f|^2\omega^2d\mu+s\int(f-a)^2d\mu
\end{eqnarray*}
from which we conclude if $s<1$.\\

(c) It is the same proof than the one for the usual Poincar\'e inequality recalled in the Section 2.
\end{proof}

\noindent \textbf{Perturbations of Subbotin distributions}
\medskip

We provide similar results to those in example \ref{exwPcauchyperturbed} for  Subbotin distributions.

Recall that the associated weight function is of the form $\omega^2(x) = 1+(1+|x|)^{2(1-\alpha)}$.
\begin{enumerate}
    \item[(a)] (Weighted Lipschitz case) The ``maximal'' perturbation satisfying the condition is of the form
    \begin{equation}\label{eqweightedPperturbsubbotin}
        U(x)-U(0) = \beta (1+\vert x\vert)^\alpha.
    \end{equation}
    \item[(b)] (Weighted generator case) Note that  potentials of the form \eqref{eqweightedPperturbsubbotin} also satisfy condition \eqref{eqconditionweightedlyapcaseperturb}.
    \item[(c)] (Weighted Lyapunov condition) Let $\nu(dx) = z_\alpha e^{-\vert x\vert^\alpha}$ for some $\alpha\in(0,1)$. The weighted generator is of the form
    \begin{equation*}
        L_W^\omega F := \omega^2 \Delta F +\left(2(1-\alpha) \frac{(1+\vert x\vert)^{1-2\alpha}}{\vert x\vert}       -\alpha\vert x\vert^{\alpha-2}\omega^2\right)\langle x, \nabla F \rangle\leq -\theta F.
    \end{equation*}
A natural choice is $F=e^{\gamma\vert x\vert^\alpha}$. We do not give all details, only informal computations:
    \begin{align*}
        \nabla F &= \gamma\alpha\vert x\vert^{\alpha-2} x F, \quad\quad \langle x, \nabla F \rangle=\gamma\alpha \vert x\vert^\alpha F, \\
        \Delta F &= \gamma^2\alpha^2\vert x\vert^{2(\alpha-1)}  F+\gamma\alpha(d+\alpha-2)\vert x\vert^{\alpha-2}F.
    \end{align*}
    Asymptotically we have
    \begin{align*}
        &\omega^2 \Delta F \sim \vert x\vert^{2(1-\alpha)}\Delta F \sim \gamma^2\alpha^2 F,\\
       &\left(2(1-\alpha) \frac{(1+\vert x\vert)^{1-2\alpha}}{\vert x\vert}       -\alpha\vert x\vert^{\alpha-2}\omega^2\right)\langle x, \nabla F\rangle\\
       &\quad\quad\sim\left(2(1-\alpha)\vert x\vert^{-2\alpha} -\alpha \vert x\vert^{\alpha-2}\vert x\vert^{2(1-\alpha)}\right) \gamma \alpha \vert x\vert^\alpha F\sim -\gamma\alpha^2 F.
    \end{align*}
Taking $\gamma\in(0,1)$, we have that the negative term dominates.

For a perturbation to be valid, there needs to exists $\theta'<\theta$ such that $-\omega^2\nabla U.\nabla F\le \theta' F$, this translates to 
    $$-\omega^2a\alpha\vert x\vert^{\alpha-2}\langle x, \nabla U\rangle\leq \theta'.$$
    This condition is satisfied for potentials of the form \eqref{eqweightedPperturbsubbotin}. Bounding $\omega^2$, we get the following simpler condition  for large values of $\vert x\vert$
    \begin{equation*}
        -\langle x, \nabla U\rangle\leq \frac{\theta'\vert x\vert^\alpha}{a\alpha}.
    \end{equation*}
\end{enumerate}

\section{Proofs of Section 4}\label{appendix_section_4}

\noindent \textbf{Proof of Lemma \ref{lem:cap-to-weighted}}
\begin{proof}
Define 
$$
f_+ = \max(0, f-m)\,,\quad f_- = \min(0, f-m).
$$
We have that 
$$
\int (f-m)w^2d\mu = \int f_+^2 w^2d\mu + \int f_-^2 w^2d\mu,
$$
where $\mu(\{f_+>0\}) \leq 1/2$ and $\mu(\{f_-<0\}) \leq 1/2$. It suffices to estimate one of the integrals since the treatment of the other is identical. 

Denote $A_k = \{x:2^k<f_+\}$ for $k\in\mathbb{Z}$. It follows that
\begin{align*}
\int f_+^2w^2 d\mu &= \sum_{k\in\mathbb{Z}} \int_{\{2^k<f_+\le 2^{k+1}\}} f_+^2w^2d\mu \leq  \sum_{k\in\mathbb{Z}} 2^{2(k+1)}\int_{\{2^k<f_+\le 2^{k+1}\}} w^2d\mu \\
&= \sum_{k\in\mathbb{Z}} 2^{2(k+1)}\nu(A_k)\le 4C \sum_{k\in\mathbb{Z}} 2^{2k} \,Cap_\mu(A_k).
\end{align*}
For each $k$, define
$$
\varphi_k(x) = \min\left(1, \frac{(f_+-2^{k-1})_+}{2^{k-1}}\right).
$$
It holds that $\varphi_k$ is admissible for $Cap_\mu(A_k)$,
$1_{A_k} \le \varphi_k\leq 1
$
and 
$$
\vert \nabla \varphi_k\vert = \frac{\vert \nabla f\vert}{2^{k-1}} 1_{\{2^{k-1}<f<2^k\}}.
$$
Hence 
$$
2^{2k-2}Cap_\mu(A_{k}) \le \int_{\{2^{k-1}<f<2^k\}}\vert \nabla f\vert^2 d\mu.
$$
Summing over $k$ and noting that the sets $\{2^{k-1}<f<2^k\}$ are disjoint, it leads to
$$
\int f_+^2 w^2 d\mu \le 16 \, C\sum_{k\in\mathbb{Z}} \int_{\{2^{k-1}<f<2^k\}}\vert \nabla f\vert^2 d\mu = 16C\int_{\{f_+>0\}} \nabla f\vert^2 d\mu\leq 16C\int \nabla f\vert^2 d\mu,
$$
which concludes the proof.
\end{proof}

\section{Proofs of Section 5}\label{appendix_section_5}
\noindent \textbf{Proof of Proposition \ref{propertieswLSI}}

\begin{proof}
    (1), (2) and (5) are immediate. For (3) and (4), we use the well known subadditivity of the entropy, that is, if $Z_1, \dots, Z_n$ are independent, then
    \begin{equation*}
        \Ent(f(Z_1, \dots, Z_n))\leq \mathbb{E}\sum_i\Ent(f(Z_1, \dots, Z_n)\vert Z_{-i}).
    \end{equation*}
Based on this, for (3), we have that
\begin{align*}
    \Ent\left(f^2\left(Z_1, \dots, Z_n\right)\right)&\leq \mathbb{E}\sum_i\Ent\left(f^2\left(Z_1, \dots, Z_n\right)\vert Z_{-i}\right)\\
    &\leq \mathbb{E}\sum_i \left(\beta_{Z_i}(s/n)\mathbb{E}_{Z_i}\left[\left\vert \nabla_{Z_i} f\left( Z_1, \dots, Z_n\right)\right\vert^2\vert Z_{-i}\right] + s/n\text{Osc}^2(f)\right) \\
    &\leq \max_i\beta_{Z_i}(s/n)\mathbb{E}_{( Z_1, \dots, Z_n)}\left[\sum_i\left\vert \nabla_{Z_i} f\left( Z_1,\dots, Z_n\right)\right\vert^2\right] + s\text{Osc}^2(f) \\
    &= \max_i\beta_{Z_i}(s/n)\mathbb{E}_{( Z_1, \dots, Z_n)}\left[\left\vert \nabla f\left( Z_1,\dots, Z_n\right)\right\vert^2\right] + s\text{Osc}^2(f).
\end{align*}
On the other hand, for the convolution it follows that
\begin{align*}
    \Ent\left(f^2\left(\sum Z_i\right)\right)&\leq \mathbb{E}\sum_i\Ent\left(f^2\left(\sum Z_i\right)\vert Z_{-i}\right)\\
    &\leq \mathbb{E}\sum_i \left(\beta_{Z_i}(s/n)\mathbb{E}_{Z_i}\left[\left\vert \nabla f\left(\sum Z_i\right)\right\vert^2\vert Z_{-i}\right] + s/n\text{Osc}^2(f)\right) \\
    &= \left(\sum_i\beta_{Z_i}(s/n)\right)\mathbb{E}_{\sum_i Z_i}\left[\left\vert \nabla f\left(\sum Z_i\right)\right\vert^2\right] + s\text{Osc}^2(f).
\end{align*}
\end{proof}

\newpage
\noindent \textbf{Proof of Proposition \ref{propWeightLSIperturb}}

\begin{proof}
(1) In the first case, using Lemma 5.1.7 in \cite{BakryGentilLedoux2014}, we have
    \begin{align*}
        \Ent_\mu(f)&\leq \int \left( f^2\log \frac{f^2}{\mathbb{E}_\nu[f^2]}-f^2+\mathbb{E}_\nu[f^2]\right) d\mu\leq e^{-m_U}\Ent_\nu(f)\\&\leq e^{-m_U} C_{LS, \omega}(\nu)\int\vert\nabla f\vert^2\omega^2 d\nu =e^{-m_U}C_{LS, \omega}(\nu)\int\vert\nabla f\vert^2(\omega e^{U/2})^2 d\mu\, .
    \end{align*}
    On the other hand, when $U$ is bounded
        \begin{align*}
        \Ent_\mu(f)&\leq e^{-m_U} C_{LS, \omega}(\nu) \int\vert\nabla f\vert^2\omega^2 d\nu \leq e^{\Osc(U)}C_{LS, \omega}(\nu)\int\vert\nabla f\vert^2\omega ^2 d\mu.
    \end{align*}

(2) (a) First assume that $\mathbb{E}_\mu[f]=0$. Recall the inequality for $\alpha>1$
$$\int f^2 Ud\mu\le \frac1\alpha\Ent_\mu(f^2)+\frac{\mu(f^2)}{\alpha}\int e^{\alpha U}d\mu.$$
By applying the weighted log-Sobolev inequality for $\nu$ with weight $\omega$, we get
\begin{align*}
\Ent_\mu(f^2) &= \int \left(f^2e^{-U}\log \frac{f^2e^{-U}}{\mathbb{E}_\nu[f^2e^{-U}]}\right)d\nu\, + \int f^2 Ue^{-U}  d\nu\\&
\leq  \Ent_\nu\left((fe^{-U/2})^2\right) + \frac{1}{\alpha}\Ent_\mu(f^2) + \frac{\Var(f)}{\alpha}\int e^{\alpha U} d\mu\\
&\leq C_{LS,\omega}(\nu) \int\left\vert\nabla \left(fe^{-U/2}\right)\right\vert^2\omega^2d\nu + \frac{1}{\alpha}\Ent_\mu(f^2) + \frac{\Var(f)}{\alpha}\int e^{\alpha U} d\mu\\
    &\le C_{LS,\omega}(\nu)(1+\varepsilon^{-1})\int|\nabla f|^2\omega^2d\mu + C_{LS,\omega}(\nu)\frac{1+\varepsilon}{4}\int f^2|\nabla U|^2\omega^2d\mu\\
    &\phantom{\le} + \frac{1}{\alpha}\Ent_\mu(f^2) + \frac{\Var(f)}{\alpha}\int e^{\alpha U} d\mu\\
    &\le C_{P,\omega}(\nu)(1+\varepsilon^{-1})\int|\nabla f|^2\omega^2d\mu+\Var_{\mu}(f)\left(\beta + \frac{\int e^{\alpha U}d\mu}{\alpha}\right)+\frac{1}{\alpha}\Ent_\mu(f^2)
\end{align*}
where we can apply the result in (2a) Proposition \ref{propWeightPperturb} to $\Var_{\mu}(f)$. Finally, if $\mathbb{E}_\mu[f]\neq0$, the result follows by applying Rothaus lemma
$$
\Ent_\mu(f^2)\leq \Ent_\mu((f-\mathbb{E}_\mu[f])^2)+ 2\Var_\mu(f).
$$
For the second part, first assume that $U\leq M_U$ is upper bounded. We thus have
\begin{align*}
\Ent_\mu(f^2) &= \int \left(f^2e^{-U}\log \frac{f^2e^{-U}}{\mathbb{E}_\nu[f^2e^{-U}]}\right)d\nu\, + \int f^2 Ue^{-U}  d\nu\\&
\leq  \Ent_\nu\left((fe^{-U/2})^2\right) + M_U \int f^2 d\mu\\
    &\le C_{LS,\omega}(\nu)(1+\varepsilon^{-1})\int|\nabla f|^2\omega^2d\mu + \left(M_U + C_{LS,\omega}(\nu)\frac{1+\varepsilon}{4}\sup_x(|\nabla U|^2\omega^2)\right)\int f^2d\mu
\end{align*}
Applying Rothaus lemma and Proposition \ref{propWeightPperturb} (2) yields the following bound
\begin{align*}
    C_{LS, \omega}(\mu)&\leq C_{LS, \omega}(\nu)(1+\varepsilon^{-1}) + ((2+M_U)C_{P, \omega}(\nu) + C_{LS, \omega}(\nu))\frac{1+\varepsilon^{-1}}{1-s}\\
    & =\frac{1+\varepsilon^{-1}}{1-s} ((2-s)C_{LS, \omega}(\nu) + (2+M_U)C_{P, \omega}(\nu)).
\end{align*}
In general, decompose $U = U^+-U^-$ positive and negative part. Define  $M_U = \ln\left(\int e^{U^{-}}d\nu\right)$ and consider the probability measure
$$
d\eta =e^{-M_U+U^-}d\nu =e^{-\bar{U}}d\nu
$$
satisfying $\bar{U}\leq M_U$. Since we have that $|\nabla U^-|\leq |\nabla U|$, we can apply what precedes to get $C_{LS, \omega}(\eta)$ and then use (1) since $d\mu =e^{M_U-U^+}d\eta = e^{-\tilde U}d\eta$ with $\tilde U\geq -M_U$. This leads to
$$
    C_{LS, \tilde\omega}(\mu) \leq e^{M_U}\frac{1+\varepsilon^{-1}}{1-s}((2-s)C_{LS, \omega}(\nu) + (2 + M_U)C_{P, \omega}(\nu)), 
    $$
where $\tilde\omega = \omega e^{(U^+-M_U)/2} = \omega e^{U^+/2}/(\int e^{U^-}d\nu)^{1/2}$.\\

(b) As before, first assume $\mathbb{E}_\mu[f]=0$. In this case, we develop the square and use the weighted operator $L^{\omega, \mu}$ to deal with the middle term.
We may then deduce

\begin{eqnarray*}
    \Ent_\mu(f^2)  &\le&C_{LS,\omega}(\nu) \int|\nabla f- \frac{f}{2}\nabla U|^2\omega^2d\mu+ \int f^2 Ue^{-U}  d\nu\\
    &=& C_{LS,\omega}(\nu)\int|\nabla f|^2\omega^2d\mu+ \frac14C_{LS,\omega}(\nu)\int f^2|\nabla U|^2\omega^2d\mu\\
    &&-\frac12C_{LS,\omega}(\nu)\int \nabla f^2.\nabla U \omega^2d\mu + \frac{1}{\alpha}\Ent_\mu(f^2) + \frac{\Var(f)}{\alpha}\int e^{\alpha U} d\mu\\
    &=&C_{LS,\omega}(\nu)\int|\nabla f|^2\omega^2d\mu+ \frac12C_{LS,\omega}(\nu)\int f^2\left(\frac12|\nabla U|^2\omega^2 + L^{\omega,\mu}U\right)d\mu\\
    && + \frac{1}{\alpha}\Ent_\mu(f^2) + \frac{\Var(f)}{\alpha}\int e^{\alpha U} d\mu\\
    &\le& C_{LS,\omega}(\nu)\int|\nabla f|^2\omega^2d\mu+ \Var_\mu(f)\left(\beta + \frac{\int e^{\alpha U} d\mu}{\alpha}\right) + \frac{1}{\alpha}\Ent_\mu(f^2) 
\end{eqnarray*}
from which we conclude by applying the result in (2b) Proposition \ref{propWeightPperturb} and Rothaus lemma.
\\

(c) By the Lyapunov condition, $\mu$ satisfies a super weighted Poincar\'e inequality, as well as a weighted Poincar\'e inequality with weight $\omega$. 
To conclude, we note that a super weighted Poincar\'e inequality is equivalent to a defective weighted log-Sobolev inequality \cite{CGWu}. Combined with the weighted Poincar\'e inequality, this defective inequality can be transformed into a tight weighted log-Sobolev inequality.
\end{proof}

\section*{Acknowledgments}
A.~Guillin is supported by the ANR-23-CE-40003, Conviviality and has benefited from a government grant managed by the Agence Nationale de la Recherche under the France 2030
investment plan ANR-23-EXMA-0001.
P.~Cordero-Encinar is supported by EPSRC through the Modern Statistics and Statistical Machine Learning (StatML) CDT programme, grant no. EP/S023151/1.

\bigskip
\bigskip

\bibliographystyle{plain}
\bibliography{CCGweak}

\end{document}